\documentclass{amsart}
\usepackage{hyperref} 
\usepackage{amsmath, amsthm, amssymb, amscd}
\usepackage{enumerate}
\usepackage{hyperref}
\usepackage{verbatim}
\usepackage{tikz}
\usepackage{tikz-cd}
\usepackage[noadjust]{cite}
%\usetikzlibrary{matrix,arrows,decorations.pathmorphing}

% 1 inch margins
%\addtolength{\oddsidemargin}{-0.875in}
%\addtolength{\evensidemargin}{-0.875in}
%\addtolength{\textwidth}{1.75in}

\DeclareMathOperator{\diam}{diam}

\title{On the Lipschitz dimension of Cheeger--Kleiner}
\author{Guy C. David}
\address{Department of Mathematical Sciences\\ Ball State University, Muncie, IN 47306}
\email{gcdavid@bsu.edu}
\date{\today}

\subjclass[2010]{Primary 30L99; Secondary 54F45, 53C23}

\begin{document}
\begin{abstract}
In a 2013 paper \cite{CK13_inverse}, Cheeger and Kleiner introduced a new type of dimension for metric spaces, the ``Lipschitz dimension''. We study the dimension-theoretic properties of Lipschitz dimension, including its behavior under Gromov-Hausdorff convergence, its (non-)invariance under various classes of mappings, and its relationship to the Nagata dimension and Cheeger's ``analytic dimension''. We compute the Lipschitz dimension of various natural spaces, including Carnot groups, snowflakes of Euclidean spaces, metric trees, and Sierpi\'nski carpets. As corollaries, we obtain a short proof of a quasi-isometric non-embedding result for Carnot groups and a necessary condition for the existence of non-degenerate Lipschitz maps between certain spaces.
\end{abstract}
\maketitle

%Theorems
\theoremstyle{plain}
\newtheorem{theorem}{Theorem}
\newtheorem{exercise}{Exercise}
\newtheorem{corollary}[theorem]{Corollary}
\newtheorem{scholium}[theorem]{Scholium}
\newtheorem{claim}[theorem]{Claim}
\newtheorem{observation}[theorem]{Observation}
\newtheorem{lemma}[theorem]{Lemma}
\newtheorem{sublemma}[theorem]{Lemma}
\newtheorem{proposition}[theorem]{Proposition}
\newtheorem{conjecture}{theorem}
\newtheorem{maintheorem}{Theorem}
\newtheorem{maincor}[maintheorem]{Corollary}
\renewcommand{\themaintheorem}{\Alph{maintheorem}}

\theoremstyle{definition}
\newtheorem{fact}[theorem]{Fact}
\newtheorem{example}[theorem]{Example}
\newtheorem{definition}[theorem]{Definition}
\newtheorem{remark}[theorem]{Remark}
\newtheorem{question}[theorem]{Question}

\numberwithin{equation}{section}
\numberwithin{theorem}{section}

\newcommand{\obar}[1]{\overline{#1}}
\newcommand{\haus}[1]{\mathcal{H}^n(#1)}
\newcommand{\prob}{\mathbb{P}}
\newcommand{\Tan}{\text{Tan}}
\newcommand{\WTan}{\text{WTan}}
\newcommand{\CTan}{\text{CTan}}
\newcommand{\CWTan}{\text{CWTan}}
\newcommand{\LIP}{\text{LIP}}
\newcommand{\dist}{\text{dist}}
\newcommand{\RR}{\mathbb{R}}
\newcommand{\HH}{\mathcal{H}}
\newcommand{\BB}{\mathcal{B}}
\newcommand{\bH}{\mathbb{H}}
\newcommand{\G}{\mathbb{G}}

\tableofcontents

\section{Introduction}

In a 2013 paper \cite{CK13_inverse}, Cheeger and Kleiner introduced a new type of dimension for metric spaces, the Lipschitz dimension, and proved some deep results about spaces of Lipschitz dimension $\leq 1$. In this paper, we study the dimension-theoretic properties of Lipschitz dimension. We begin our introduction with a discussion of the analogies with topological dimension that lead to the definition of Lipschitz dimension, and then describe the structure and results of the present paper.

\subsection{Topological dimension and Lipschitz dimension}\label{subsec:intro1}
We will be concerned with a metric analog of a well-studied concept in topology: the \textit{topological dimension} $\dim_T(X)$ of a space $X$. In the setting of compact metric spaces, the topological dimension $\dim_T(X)$ admits many equivalent definitions. The ``small inductive definition'' defines the empty set to have $\dim_T(\emptyset)=-1$, and then declares that $\dim_T(X)\leq n$ if $X$ has a neighborhood basis  of open sets $U$ with $\dim_T (\partial U) \leq n-1$. The ``Lebesgue covering definition'' declares $\dim_T(X)$ to be the minimal $n$ such that every locally finite open cover of $X$ admits a locally finite refinement of multiplicity at most $n+1$, meaning that every point is contained in at most $n+1$ sets of the refinement. 

These two definitions are known to be equivalent for compact (in fact, for separable) metric spaces (see \cite[Sections I.4 and II.5]{Nagata}), and so we refer to them simply as ``topological dimension'', denoted by $\dim_T$.

There is yet a another way (among many others unmentioned here) to view the topological dimension of a compact metric space $X$, this time through studying continuous maps from $X$ to Euclidean space. A continuous map is called \textit{light} if $f^{-1}(p)$ is totally disconnected for each $p$ in the image of $f$. We then have, for a compact metric space $X$, that
\begin{equation}\label{eq:toplight}
\dim_T(X) = \min\{ n\geq 0 : \exists f\colon X \rightarrow \RR^n \text{ light }\},
\end{equation}
where $\RR^0$ denotes the one-point space. (This follows from \cite[Theorems III.6 and III.10]{Nagata}.) Thus, the topological dimension of compact metric spaces can be seen through examining light maps to Euclidean space.

In \cite{CK13_inverse}, Cheeger and Kleiner were inspired by this fact to give a quantitative analog of topological dimension. They replace continuous maps by Lipschitz maps, give a quantitative analog of the notion of lightness, and then use the analog of \eqref{eq:toplight} to define a new notion of dimension.

As a preliminary, we need the following discrete notion:
\begin{definition}
For $r>0$, a finite sequence $(x_1, x_2, \dots, x_k)$ in a metric space $X$ is an \textit{$r$-path} if $d(x_i, x_{i+1})\leq r$ for all $i\in\{1,\dots,k\}$.

We say that two points in $X$ are in the same \textit{$r$-component} of $X$ if there is an $r$-path in $X$ containing both of them. This defines an equivalence relation on $X$.
\end{definition}

Cheeger and Kleiner then used this notion to define a quantitative analog of lightness for Lipschitz maps:

\begin{definition}[Cheeger--Kleiner \cite{CK13_inverse}]\label{def:LL2}
A map $f:X\rightarrow Y$ between metric spaces is \textit{Lipschitz light} if there is a constant $C>0$ such that 
\begin{itemize}
\item $f$ is Lipschitz with constant $C$, and
\item for every $r>0$ and every subset $W\subset Y$ with $\diam(W)\leq r$, the $r$-components of $f^{-1}(W)$ have diameter at most $Cr$.
\end{itemize}
\end{definition}

(An astute reader may note that Definition \ref{def:LL2} is not precisely the one given in \cite[Definition 1.14]{CK13_inverse}, though it is the one used in \cite[Section 11]{CK13_inverse}. We address this small discrepancy in subsection \ref{subsec:discrepancy} below.)

By analogy with \eqref{eq:toplight}, Cheeger and Kleiner define the following notion of dimension, which is the main subject of this paper.

\begin{definition}[Cheeger--Kleiner \cite{CK13_inverse}]\label{def:lipdim}
A metric space $X$ has \textit{Lipschitz dimension} $\leq n$ if there is a Lipschitz light map $f:X\rightarrow \mathbb{R}^n$.

We let the Lipschitz dimension of $X$ be the minimal $n$ such that $X$ has Lipschitz dimension $\leq n$, and denote this by $\dim_L(X)$. If $X$ admits no Lipschitz light map into any Euclidean space, we write $\dim_L(X) = \infty$.
\end{definition}
$\RR^0$ is again considered here to be the one-point metric space.

Because Lipschitz light maps are easily seen to be light maps, we immediately have the following consequence of \eqref{eq:toplight}, which we record for later use:

\begin{observation}\label{obs:liptop}
For a compact metric space $X$, $\dim_L (X) \geq \dim_T (X)$.
\end{observation}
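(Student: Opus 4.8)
The plan is to reduce the statement to the characterization \eqref{eq:toplight} of topological dimension via light maps, by verifying the parenthetical claim that every Lipschitz light map is light. First I would dispose of the trivial case: if $\dim_L(X) = \infty$ there is nothing to prove, so assume $\dim_L(X) = n < \infty$ and fix a Lipschitz light map $f \colon X \to \RR^n$ with constant $C$. It then suffices to show that $f$ is light, i.e., that $f^{-1}(p)$ is totally disconnected for each $p$ in the image; once this is established, \eqref{eq:toplight} immediately yields $\dim_T(X) \leq n = \dim_L(X)$.

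To show $f$ is light, I would argue by contradiction. Suppose $f^{-1}(p)$ contains a connected subset $K$ with two distinct points $x, y$, and set $\delta = d(x,y) > 0$. The key tool is the standard fact that a connected set is well-chained: for every $r > 0$, any two of its points are joined by an $r$-path lying inside the set. (This is proved by fixing $a \in K$ and checking that the set of points reachable from $a$ by an $r$-path in $K$ is both open and closed in $K$, hence, by connectedness, all of $K$.) Applying this to the pair $x, y$ shows that $x$ and $y$ lie in a common $r$-component of $K$, and hence of $f^{-1}(p) \supseteq K$, for every $r > 0$.

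Next I would apply the Lipschitz lightness condition with the degenerate choice $W = \{p\}$, which has $\diam(W) = 0 \leq r$ for every $r > 0$. Since $f^{-1}(W) = f^{-1}(p)$, the condition forces every $r$-component of $f^{-1}(p)$ to have diameter at most $Cr$. But the $r$-component containing both $x$ and $y$ has diameter at least $\delta$, so $\delta \leq Cr$ for all $r > 0$; taking any $r < \delta / C$ gives a contradiction. Hence $f^{-1}(p)$ is totally disconnected and $f$ is light, completing the reduction.

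The argument is essentially routine, and I do not anticipate a serious obstacle. The only points requiring a little care are the well-chainedness of connected sets and the observation that an $r$-path lying within $K$ is in particular an $r$-path within the larger fiber $f^{-1}(p)$, so that the diameter bound supplied by the lightness condition genuinely applies to the pair $x, y$.
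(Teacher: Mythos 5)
Your proof is correct and follows exactly the paper's approach: the paper derives the observation immediately from the characterization \eqref{eq:toplight} together with the remark that Lipschitz light maps are ``easily seen'' to be light. Your argument simply supplies the details of that remark (via well-chainedness of connected sets and the choice $W=\{p\}$), and the details are right.
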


\begin{remark}\label{rmk:liptop}
The characterization of topological dimension in \eqref{eq:toplight} serves for us as an analogy leading to Definition \ref{def:lipdim}, which we may then consider for non-compact metric spaces.

However, when leaving the realm of compact metric spaces, one has to be a bit careful with \eqref{eq:toplight} (or with the definition of ``light''), and hence with Observation \ref{obs:liptop} which relies on it. There is an example due to Erd\"os \cite{Erdos} of a totally disconnected separable metric space of topological dimension $1$. On the other hand, such a space trivially admits a light map to $\RR^0$, so \eqref{eq:toplight} fails for it.

On a positive note, both \eqref{eq:toplight} and Observation \ref{obs:liptop} extend immediately to $\sigma$-compact (in particular, proper) metric spaces by \cite[Theorem II.1]{Nagata}, which is the only situation in which we will apply these facts.

For a more general characterization of topological dimension through maps with ``$0$-dimensional fibers'' in a slightly stronger sense, see \cite[Theorem III.10]{Nagata}.
\end{remark}

After a brief aside to discuss the results of \cite{CK13_inverse} concerning Lipschitz dimension, we are ready to elaborate on the goals and results of the present paper.

\subsection{The results of Cheeger--Kleiner concerning Lipschitz dimension $1$}
The two main theorems of \cite{CK13_inverse} concern the structure of spaces of Lipschitz dimension $\leq 1$. We include these here by way of background; they are not used in the present paper.

First of all, Cheeger and Kleiner characterize spaces of Lipschitz dimension $\leq 1$ as limits of certain systems of metric graphs.
\begin{definition}[\cite{CK13_inverse}, Definition 1.8]
An inverse system 
$$ \dots \xleftarrow{\pi_{-i-1}} X_{-i}\xleftarrow{\pi_{-i}}\dots \xleftarrow{\pi_{1}} X_0 \xleftarrow{\pi_{0}} \dots \xleftarrow{\pi_{i-1}} X_i \xleftarrow{\pi_{i}} \dots $$
is \text{admissible} if, for some integer $m\geq 2$, the following conditions hold:
\begin{enumerate}[(i)]
\item $X_i$ is a non-empty directed graph for each $i\in \mathbb{Z}$.
\item For each $i\in\mathbb{Z}$, if $X'_i$ denotes the directed graph formed by subdividing each edge of $X_i$ into $m$ edges, then $\pi_i$ induces a map from $X_{i+1}$ to $X'_i$ that is simplicial, direction-preserving, and an isomorphism on each edge.
\item For every $i,j\in\mathbb{Z}$ and every $x\in X_i$, $x'\in X_j$, there is $k\leq \min(i,j)$ such that $x$ and $x'$ project into the same connected component of $X_k$.
\end{enumerate}
\end{definition}

Each graph $X_i$ is endowed with the path metric (allowing infinite distances between points in different connected components). The inverse limit $X_\infty$ admits a projection $\pi^\infty_i:X_\infty\rightarrow X_i$ for each $i\in\mathbb{Z}$. The space $X_\infty$ is then endowed with the metric $\overline{d}_\infty$: the supremal pseudo-distance on $X_\infty$ such that, for each $i\in\mathbb{Z}$ and $v\in X_i$, the inverse image $(\pi^\infty_i)^{-1}\left(\text{St}(v,X_i)\right)$ has diameter at most $2m^{-i}$. Here $\text{St}(v,X_i)$ denotes the closed star of $v$ in $X_i$.

\begin{theorem}[\cite{CK13_inverse}, Theorems 1.10 and 1.11]\label{theorem:CKgraph}
A compact metric space has Lipschitz dimension $\leq 1$ if and only if $X$ is bi-Lipschitz equivalent to an inverse limit of an admissible inverse system of graphs.
\end{theorem}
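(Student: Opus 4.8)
The plan is to prove the two implications separately, using the directed graph structure as the bridge between Lipschitz light maps to $\RR$ and the combinatorics of the inverse system.

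For the \emph{if} direction, that an inverse limit of an admissible system has Lipschitz dimension $\le 1$, I would use the orientation to build an explicit Lipschitz light map $f\colon X_\infty\to\RR$. The idea is that the directed structure lets me send each directed edge to an oriented, unit-speed subinterval of $\RR$; since each $\pi_i$ is direction-preserving and an isometry on edges, after rescaling edge lengths to $m^{-i}$ these choices are compatible under subdivision and assemble, in the limit, into a single $1$-Lipschitz map $f$ (checking that this ``height'' is consistently defined around loops is one point requiring care). The main content is then to verify Lipschitz lightness: given an interval $W\subset\RR$ of length $\le r$, I must bound the diameters of the $r$-components of $f^{-1}(W)$. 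The mechanism is that, at the scale $i$ with $m^{-i}\approx r$, such an $r$-component is forced to lie in the preimage of a single star $\text{St}(v,X_i)$, whose $\obar{d}_\infty$-diameter is at most $2m^{-i}\lesssim r$ by the very definition of the metric. Here the unit-speed monotonicity along directed edges is what prevents an $r$-path trapped inside $W$ from traversing more than a bounded amount of edge-length, hence from wandering far.

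The \emph{only if} direction is the substantive one. Starting from a Lipschitz light map $f\colon X\to\RR$ with constant $C$, I would build the inverse system by a multi-scale decomposition. Fixing $m$ large relative to $C$, at scale $i$ I cover $\RR$ by the standard overlapping intervals of length comparable to $m^{-i}$, pull these back under $f$, and pass to $r$-components with $r\approx m^{-i}$; Lipschitz lightness guarantees each resulting piece has $\diam\lesssim m^{-i}$. Declaring these pieces the vertices of $X_i$ and joining pieces coming from adjacent intervals by edges (oriented by the order on $\RR$) produces a directed graph. The inclusion of a scale-$(i{+}1)$ piece into the scale-$i$ piece containing it defines $\pi_i$, and the fact that an interval of length $m^{-i}$ subdivides into $m$ intervals of length $m^{-(i+1)}$ is what should make $\pi_i$ simplicial, direction-preserving, and an edge-isomorphism after subdivision, yielding admissibility; condition (iii) follows because any two pieces merge into one $r$-component once $r$ is taken large enough.

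The main obstacle, and where I would concentrate the work, is proving that $X$ is bi-Lipschitz equivalent to the inverse limit $X_\infty$ with its metric $\obar{d}_\infty$. The upper bound $\obar{d}_\infty\lesssim d_X$ is comparatively soft, following from the controlled diameters of the pieces. The lower bound $d_X\lesssim \obar{d}_\infty$ is the crux: I must show that two points far apart in $X$ are separated by the combinatorics at the appropriate scale, i.e.\ do not lie in a common star $\text{St}(v,X_i)$ for $m^{-i}$ of the order of their distance. This step is precisely what consumes the full strength of the Lipschitz light hypothesis --- the diameter bound on $r$-components of preimages --- since it is what prevents distant points from being linked through a short chain of overlapping pieces. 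Organizing the overlapping-interval bookkeeping so that the $\pi_i$ are honestly simplicial edge-isomorphisms, rather than merely coarsely so, is the other delicate point, and would likely force a careful choice of the interval cover together with the parameter $m$.
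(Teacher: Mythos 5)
Your proposal cannot be compared against a proof in this paper, because the paper contains none: Theorem \ref{theorem:CKgraph} is quoted verbatim from Cheeger--Kleiner \cite{CK13_inverse} purely as background, with the explicit remark that it is not used in the present work. So the only available benchmark is Cheeger--Kleiner's original argument, which your outline does broadly parallel (a height-type map on $X_\infty$ for one direction, a multi-scale decomposition of preimages of intervals for the converse). The problem is that what you have written is a roadmap whose decisive steps are flagged rather than carried out, and at least two of the flagged steps conceal the actual mathematical content of the theorem.

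Concretely: (a) In the \emph{if} direction, a map that is monotone at unit speed along every directed edge exists on a graph only if that graph has no directed cycles and, more generally, no ``unbalanced'' cycles (two directed paths with common endpoints but different lengths); the definition of admissibility quoted here does not forbid these on its face. They are excluded only by a genuine lemma --- any such cycle would propagate down the bi-infinite system to levels $i\to-\infty$, forcing cycles of bounded total length built from edges of length $m^{-i}\to\infty$, a contradiction --- and this use of the indexing over all of $\mathbb{Z}$ is entirely absent from your sketch. (b) Your verification of lightness asserts that an $r$-component of $f^{-1}(W)$ is ``forced'' into a single star-preimage ``by the very definition of the metric.'' But $\overline{d}_\infty$ is defined as a \emph{supremal} pseudo-distance subject to upper-bound constraints, so the definition only yields upper bounds on diameters of star-preimages. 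What you need is a \emph{lower} bound on $\overline{d}_\infty$ between points lying in distinct, non-adjacent stars of comparable height --- in effect a two-sided explicit formula for $\overline{d}_\infty$ --- and this same estimate is what shows $\overline{d}_\infty$ is a metric rather than a degenerate pseudo-metric and that $X_\infty$ is compact. Your proposed mechanism, unit-speed monotonicity along edges, controls the spread of $f$-values, but those are already confined to $W$ by hypothesis; it does not by itself prevent an $r$-path from hopping between far-apart stars with similar heights. (c) In the converse direction you correctly name the crux ($d_X\lesssim\overline{d}_\infty$, honest simpliciality of the $\pi_i$ under the overlapping-interval bookkeeping, and the actual definition of the bi-Lipschitz map on edges rather than just on vertices), but you defer all of it. Those deferred items are precisely where Cheeger and Kleiner spend their effort, so the proposal as it stands has genuine gaps rather than merely unpolished details.
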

We refer the reader to \cite{CK13_inverse} for further details. In fact, Theorem \ref{theorem:CKgraph} admits a generalization to higher dimensions (and further): see \cite[Section 11]{CK13_inverse}.

Most of \cite{CK13_inverse} concerns the following embedding result.
\begin{theorem}[\cite{CK13_inverse}, Theorem 1.16]\label{theorem:CKL1}
Each compact metric space of Lipschitz dimension $\leq 1$ admits a bi-Lipschitz embedding into the Banach space $L_1(Z,\mu)$, for some measure space $(Z,\mu)$.
\end{theorem}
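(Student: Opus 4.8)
The plan is to reformulate bi-Lipschitz embeddability into $L_1$ in terms of \emph{cut measures}, and then to construct such a measure from the inverse-limit-of-graphs model supplied by Theorem \ref{theorem:CKgraph}. Recall that a map $\Phi \colon X \to L_1(Z,\mu)$ of the form $\Phi(x) = (E \mapsto \chi_E(x))$, where the coordinates $E$ range over a measured family $(\Omega,\nu)$ of subsets (cuts) of $X$, satisfies $\|\Phi(x)-\Phi(y)\|_1 = \int_\Omega |\chi_E(x)-\chi_E(y)| \, d\nu(E)$. Thus it suffices to produce a measure $\nu$ on cuts and constants $0 < c \le C < \infty$ with
\[
c \, \overline d_\infty(x,y) \;\le\; \int_\Omega |\chi_E(x)-\chi_E(y)|\, d\nu(E) \;\le\; C\, \overline d_\infty(x,y)
\]
for all $x,y$. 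Using Theorem \ref{theorem:CKgraph}, I would replace $X$ by the inverse limit $X_\infty$ of an admissible system $(X_i,\pi_i)$ and build the cuts from the combinatorics of the graphs $X_i$.

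The natural source of cuts is the following. Each $X_i$ is a metric graph at scale $m^{-i}$; cutting it at an interior point $p$ of an edge partitions the relevant connected component into the two sides of $p$, and pulling this partition back through $\pi^\infty_i$ yields a cut $E_{i,p} \subset X_\infty$. I would let $p$ range over each edge (parametrized by arclength) and $i$ over $\mathbb{Z}$, weighting these cuts so that the contribution of level $i$ matches the scale $m^{-i}$: integrate the characteristic functions $\chi_{E_{i,p}}$ against arclength on edges of $X_i$, then sum over $i$ with an appropriate normalization. The layer-cake identity $|g(x)-g(y)| = \int_{\mathbb{R}} |\chi_{\{g>t\}}(x)-\chi_{\{g>t\}}(y)|\,dt$, valid because the superlevel sets of a single function are nested, is what converts position along an edge at scale $i$ into an integral of such cuts, so the level-$i$ piece of $\Phi$ is exactly a layer-cake decomposition of a Lipschitz function recording position within $X_i$.

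The upper bound should be the routine half. Each level-$i$ cut family is subordinate to the star covering $\{\mathrm{St}(v,X_i)\}$, whose fibers under $\pi^\infty_i$ have diameter $\le 2m^{-i}$ by construction of $\overline d_\infty$; hence $x$ and $y$ can be separated at level $i$ only when $m^{-i} \gtrsim \overline d_\infty(x,y)$, and the total level-$i$ contribution to $\|\Phi(x)-\Phi(y)\|_1$ is $O(m^{-i})$. Summing the resulting geometric series over the finitely many relevant scales gives $\|\Phi(x)-\Phi(y)\|_1 \lesssim \overline d_\infty(x,y)$, provided the normalizing weights are chosen to be summable.

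The main obstacle is the lower bound: I must show that $x$ and $y$ are separated by cuts of total measure $\gtrsim \overline d_\infty(x,y)$. Here the hypothesis that $X$ has Lipschitz dimension $\le 1$ — encoded in the \emph{one}-dimensionality of the graphs $X_i$ and the \emph{direction-preserving} simplicial structure of the maps $\pi_i$ — is essential. Choosing the scale $i$ with $m^{-i} \approx \overline d_\infty(x,y)$, I would argue that $\pi^\infty_i(x)$ and $\pi^\infty_i(y)$ lie a definite graph-distance apart in $X_i$, so that any path realizing the metric must cross a definite arclength's worth of edge-points $p$, each of whose cut $E_{i,p}$ genuinely separates $x$ from $y$. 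The delicate points are (a) handling cycles in $X_i$, where cutting at a single point need not disconnect, which forces one to exploit the monotone, direction-preserving structure to produce a net separation rather than cancellation, and (b) showing that the combinatorial separation at the single well-chosen scale survives passage to the supremal pseudometric $\overline d_\infty$, i.e.\ that this definition does not collapse the cut content. This coarea-type lower bound, rather than the construction or the upper estimate, is where essentially all of the difficulty lies.
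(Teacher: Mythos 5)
You should first be aware that the paper you are looking at does not prove this statement at all: Theorem \ref{theorem:CKL1} is quoted verbatim from \cite{CK13_inverse} as background (the author explicitly says these results ``are not used in the present paper''), and the author also notes that ``most of \cite{CK13_inverse} concerns'' this one theorem. So the only meaningful comparison is with Cheeger--Kleiner's original argument, and against that benchmark your proposal is a strategy outline, not a proof. The frame you set up is the right one and does match theirs: reduce to the inverse-limit model via Theorem \ref{theorem:CKgraph} (legitimate, since Lipschitz dimension and $L_1$-embeddability are bi-Lipschitz invariant), and recast bi-Lipschitz embedding into $L_1$ as the construction of a cut measure with two-sided comparison to $\overline{d}_\infty$. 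The upper bound sketch, with level-$i$ cuts subordinate to the star covering and a geometric series over scales, is also plausible and is the routine half.

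The genuine gap is that the object your argument is built on does not exist as described, and the step that would repair it is exactly the one you defer. For a graph $X_i$ with cycles, removing an interior point $p$ of an edge does \emph{not} partition the connected component into ``the two sides of $p$,'' so the cut $E_{i,p}$ is simply undefined; there is no canonical two-set partition to pull back through $\pi^\infty_i$. You flag this yourself as delicate point (a), but saying one must ``exploit the monotone, direction-preserving structure to produce a net separation rather than cancellation'' is naming the problem, not solving it: making cuts out of a directed graph with cycles, coherently across all scales of the inverse system, and then proving the coarea-type lower bound that the resulting cut measure does not collapse any pair $x,y$ at the scale $m^{-i}\approx \overline{d}_\infty(x,y)$, is precisely the content that occupies the bulk of Cheeger--Kleiner's paper. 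Your points (a) and (b) together \emph{are} the theorem; as written, the proposal proves only the soft reformulation and the easy direction of the estimate, and asserts the rest.
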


By contrast, \cite{La00} and \cite{CK13_PI} construct spaces of Lipschitz dimension $1$ with no bi-Lipschitz embedding into Hilbert space, or even the Banach space $\ell_1$. Furthermore, in \cite{LeeSid}, Lee and Sidiropoulos construct a space of Lipschitz dimension $2$ with no bi-Lipschitz embedding into $L_1$, so Theorem \ref{theorem:CKL1} does not extend to spaces of higher Lipschitz dimension.

\subsection{Purpose and results of the present paper}
The purpose of this paper is to study of the dimension-theoretic properties of Lipschitz dimension. We explain the structure of our paper and the ideas of our main results here, referring the reader to the appropriate sections for the official statements of theorems.

After giving basic notation in Section \ref{sec:notation}, we first address the behavior of Lipschitz dimension under products and unions in Section \ref{sec:productsunions}.

In Section \ref{sec:GH}, the technical core of the paper, we characterize Lipschitz light maps on doubling metric spaces via their behavior under Gromov-Hausdorff convergence (Theorem \ref{prop:LLGH}), obtaining bounds on Lipschitz dimension of tangent spaces as a consequence.

We then use this and other techniques to compute the Lipschitz dimension of a number of natural examples in Section \ref{sec:examples}, including metric trees, snowflakes of Euclidean spaces, and Carnot groups. The most concrete results of this section are that
\begin{itemize}
\item products of $n$ metric trees (as well as rank-$n$ Euclidean buildings) have Lipschitz dimension $n$ (Corollary \ref{cor:treebuilding}),
\item snowflakes of $\RR^n$ have Lipschitz dimension $n$ (Corollary \ref{cor:Rnsnowflake}), and
\item non-abelian Carnot groups have infinite Lipschitz dimension (Theorem \ref{thm:Carnot}).
\end{itemize}
As a corollary of this last fact, we obtain a short proof of a quasi-isometric non-embedding result (Corollary \ref{cor:carnotcoarse}) in the spirit of Pauls \cite{Pauls}. 

Also in Section \ref{sec:examples}, in Theorem \ref{theorem:ss}, we introduce a ``self-covering'' property for Euclidean subsets and use this to compute the Lipschitz dimension of some classical fractals, like the Sierpi\'nski carpets and gasket. The results of Section \ref{sec:examples} rely on Gromov-Hausdorff convergence arguments, along with some key ideas from \cite{LS} in the case of trees and buildings.

In Section \ref{sec:relation}, we briefly describe the relationship (or lack thereof) between Lipschitz dimension and other well-studied notions of metric dimension: the Nagata, Assouad, and Hausdorff dimensions. In particular, we show that Lipschitz dimension bounds Nagata dimension from above (Corollary \ref{cor:naglipdim}), and that the two agree for $0$-dimensional spaces but not in general (Proposition \ref{prop:zerodim}).

In Section \ref{sec:Cheeger}, we consider the ``Lipschitz differentiability spaces'' first described by Cheeger. These are metric measure spaces $X$ that carry a type of measurable cotangent bundle allowing for the almost-everywhere differentiation, in an appropriate sense, of Lipschitz functions from $X$ to $\RR$. Our main result in this section is Theorem \ref{thm:Cheegerbound}, which states that the dimension of Cheeger's cotangent bundle is bounded above by the Lipschitz dimension, complementing earlier results of Schioppa \cite[Corollary 5.99]{Schioppa} and the author \cite[Corollary 8.5]{GCD15} concerning Assouad dimension.

Lastly, in Section \ref{sec:mappingproperties}, we study the invariance and non-invariance properties of Lipschitz dimension under various categories of mappings: Lipschitz light, quasisymmetric, snowflake, and David--Semmes regular mappings. We provide a construction in Corollary \ref{cor:0dimimage} that shows that, while Lipschitz light mappings cannot decrease Lipschitz dimension, they can increase it arbitrarily, and in fact that every compact doubling metric space is the image under a Lipschitz light map of a space with Lipschitz dimension $0$.

In our study of David--Semmes regular mappings in subsection \ref{subsec:DSregular}, we also obtain in Corollary \ref{cor:DS} a necessary condition for the existence of non-degenerate Lipschitz maps between certain spaces.

Throughout the paper, we also include a number of questions that we consider worth studying.

\subsection{Remarks on the definition of Lipschitz light}\label{subsec:discrepancy}

Before proceeding further, we remark briefly on a discrepancy between our definition of Lipschitz light in Definition \ref{def:LL2} and \cite[Definition 1.14]{CK13_inverse}.

In \cite[Definition 1.14]{CK13_inverse}, a Lipschitz map $f\colon X\rightarrow Y$ between metric spaces is called Lipschitz light if there is $C>0$ such that, for every bounded subset $W\subset Y$, the $\diam(W)$-components of $f^{-1}(W)$ have diameter at most $C\diam(W)$.

Our Definition \ref{def:LL2} and \cite[Definition 1.14]{CK13_inverse} are equivalent if $Y=\RR^n$ ($n\geq 1$), but are not equivalent in general, as Remarks \ref{rmk:LL1} and \ref{rmk:LL2} now show. 

\begin{remark}\label{rmk:LL1}
It is clear that if a mapping satisfies Definition \ref{def:LL2}, then it satisfies \cite[Definition 1.14]{CK13_inverse}. If $n\geq 1$ and $Y=\RR^n$, it is not hard to show that the converse holds as well. Indeed, if $f\colon X\rightarrow \RR^n$ satisfies \cite[Definition 1.14]{CK13_inverse} and $W\subseteq \RR^n$ has $\diam(W)\leq r$, then one may find a point $x\in\RR^n$ such that $W'=W\cup\{x\}$ has $\diam(W')=r$. Any $r$-component of $f^{-1}(W)$ lies in an $r$-component, (i.e., a $\diam(W')$-component) of $f^{-1}(W')$, and hence has diameter at most $Cr$. 
\end{remark}

\begin{remark}\label{rmk:LL2}
In general, a mapping may satisfy \cite[Definition 1.14]{CK13_inverse} and not Definition \ref{def:LL2}, as the following example shows. Let $X=[0,1]\times(2\mathbb{Z})$, $Y=[0,1]$, and $f$ simply be the projection to the first factor, mapping from $X$ to $Y$. Then $f$ satisfies \cite[Definition 1.14]{CK13_inverse}: Any $W\subseteq Y$ has $\diam(W)\leq 1$, so any $\diam(W)$-component of $f^{-1}(W)$ is simply an isometric copy of $W$ contained in some $[0,1]\times \{2n\}$.

However, this mapping fails Definition \ref{def:LL2} in the case $W=Y$ and $r=2$, since $f^{-1}(W)$ has $2$-paths of arbitrarily large diameter.
\end{remark}

For the remainder of this paper, we use Definition \ref{def:LL2} above as our definition of Lipschitz light, as it is better adapted to general metric space targets. We point out that, for the purposes of computing Lipschitz dimension on spaces with positive Lipschitz dimension, it does not matter which definition one takes (by Remark \ref{rmk:LL1}), and that Definition \ref{def:LL2} is in any case the one used in Section 11 of \cite{CK13_inverse}.

\subsubsection*{Acknowledgments}
The author would like to thank Bruce Kleiner for helpful discussions concerning Lipschitz dimension a number of years ago, especially his pointer that Theorem \ref{thm:Carnot} should hold. This work was partially supported by the National Science Foundation under Grant No. DMS-1758709.

\section{Notation and Definitions} \label{sec:notation}

\subsection{Basic metric space notions}
We write $(X,d)$ for a metric space, which in this paper will generally be a complete metric space. Often, if the metric $d$ is understood from context, we denote it simply by $X$, and we often use the same symbol $d$ to denote the metric on different spaces. A pointed metric space is simply a pair $(X,x)$ consisting of a metric space $X$ and a point $x\in X$.

We denote open and closed balls in a metric space $X$ by
$$ B(x,r) = \{y\in X: d(y,x)<r\} \text{ and } \overline{B}(x,r) = \{y\in X: d(y,x) \leq r\}.$$
If we wish to emphasize the ambient space $X$ in which the ball is taken, we may write $B_X(x,r)$. If $\lambda>0$ and $B=B(x,r)$, it is convenient to write $\lambda B$ for $B(x,\lambda r)$.

The diameter of a set $E$ in a metric space $X$ is
$$ \diam(E) = \sup\{d(x,y): x,y\in E\}.$$

The distance between two sets $E$, $F$ in a metric space $X$ is
$$ \dist(E,F) = \inf\{d(x,y): x\in E, y\in F\}.$$
If one of these sets happens to be a single point, say $E=\{p\}$, then we write $\dist(p,F)$ rather than $\dist(\{p\},F)$.

If $E$ is a subset of a metric space $X$ and $r>0$, then the open and closed $r$-neighborhoods of $E$ in $X$ are
$$ N_r(E) = \{y\in X: \dist(y,E) < r\} \text{ and } \overline{N}_r(E)= \{y\in X: \dist(y,E) \leq r\}.$$

For $\epsilon>0$, an $\epsilon$-separated set in $X$ is a subset in which all mutual distances are at least $\epsilon$. An $\epsilon$-net $S$ in $X$ is a maximal $\epsilon$-separated set (which always exists by Zorn's lemma); in that case we have $X = N_{\epsilon}(S)$.

A metric space is \textit{proper} if all closed balls in the space are compact. A metric space is \textit{doubling} if there is a constant $N$ such that every ball in $X$ can be covered by $N$ balls of half the radius. This is a finite dimensionality condition; in fact, it is equivalent to the finiteness of the Assouad dimension defined below. Every complete, doubling metric space is automatically proper.

It is often useful to study the Cartesian product $X\times Y$ of two metric spaces $(X,d_X)$ and $(Y,d_Y)$. To fix a convention, unless otherwise noted, we take the metric on $X\times Y$ to be the $\ell_\infty$ combination of the metrics on the factors:
$$ d((x,y),(x',y')) = \max\{ d_X(x,x'), d_Y(y,y') \}.$$
Of course, this choice of product metric $d$ is bounded above and below by constant multiples of any of the other natural $\ell_p$ combinations of the two metrics.

In Section \ref{sec:Cheeger}, we will need the notion of a \textit{metric measure space}, which for us is a complete metric space $X$ equipped with a finite Radon measure $\mu$. A metric measure space is \textit{doubling} if the measure $\mu$ is doubling, meaning that there is a constant $C\geq 0$ such that
$$ \mu(2B) \leq C \mu(B)$$
for all balls $B$ in $X$. In particular, this implies that $X$ is a doubling metric space in the sense defined above \cite[Section 10.13]{He}.

\subsection{Mappings}
A function $f:X\rightarrow Y$ between two metric spaces is called \textit{Lipschitz} (or $L$-Lipschitz) if there is $L\geq 0$ such that 
$$ d(f(x),f(x')) \leq Ld(x,x') \text{ for all } x,x'\in X.$$
It is called \textit{bi-Lipschitz} (or $L$-bi-Lipschitz) if
$$ L^{-1}d(x,x') \leq d(f(x),f(x')) \leq Ld(x,x') \text{ for all } x,x'\in X.$$
A $1$-bi-Lipschitz map is called an \textit{isometric embedding}. 

A more general class than the bi-Lipschitz mappings is the class of \textit{quasisymmetric} mappings. An embedding $f\colon X \rightarrow Y$ is called quasisymmetric if there is a homeomorphism $\eta\colon [0,\infty)\rightarrow [0,\infty)$ such that
$$ d(x,a) \leq td(x,b) \text{ implies } d(f(x),f(a))\leq \eta(t) d(f(x),f(b)),$$
for all triples $a,b,x$ of points in $X$ and all $t\geq 0$. Quasisymmetric maps may wildly distort distances (in particular, they may not be Lipschitz), but in some sense they preserve ``shape''. See \cite{He} for an introduction to quasisymmetric mappings.

Other than the bi-Lipschitz mappings, another interesting sub-class of quasisymmetric mappings are the snowflake mappings. A mapping $f\colon X \rightarrow Y$ is called a \textit{snowflake} mapping (or an $\alpha$-snowflake mapping) if there are constants $\alpha \in (0,1]$ and $C>0$ such that
$$ C^{-1} d(x,y)^\alpha \leq d(f(x), f(y)) \leq Cd(x,y)^\alpha \text{ for all } x,y\in X. $$
A metric space $Z$ is called an $\alpha$-snowflake if it is the image of another metric space $X$ under an $\alpha$-snowflake mapping. Of course, this is equivalent to saying that $Z$ is bi-Lipschitz equivalent to the metric space $(X,d^\alpha)$.

The terminology ``snowflake'' arises from the fact that the standard von Koch snowflake curve in $\RR^2$, with the induced Euclidean metric, can be viewed as an $\alpha$-snowflake of $[0,1]$, where $\alpha^{-1}$ is the Hausdorff dimension of the snowflake.

A few other classes of mappings will be introduced in the paper as needed.

\subsection{Other notions of dimension}\label{subsec:dimensions}
We take the opportunity to recall the definitions of three other notions of dimension: the Hausdorff, Assouad, and Nagata dimensions. For more information about the Hausdorff and Assouad dimensions, we refer the reader to \cite[Sections 8.3 and 10.13]{He}, and for the Nagata dimension we refer the reader to \cite{LS}.

The \textit{$n$-dimensional Hausdorff measure} of a set $E$ in a metric space $X$ is
$$ \HH^n(E) = \lim_{\delta\rightarrow 0} \inf_{\{B_i\}}\sum_i \diam(B_i)^n,$$
where the infimum is over covers of $E$ by closed balls $B_i$ of diameter at most $\delta$. 
\begin{definition}
The \textit{Hausdorff dimension} of $X$ is 
$$ \dim_H(X) = \inf\{\alpha>0 : \HH^\alpha(E) = 0\} \in [0,\infty]$$
\end{definition}

\begin{definition}\label{def:assouad}
The \textit{Assouad dimension} $\dim_A(X)$ of a metric space $X$ is the infimum of all $\beta>0$ such that there is a constant $C$ for which every set of diameter $d$ can be covered by at most $C\epsilon^{-\beta}$ sets of diameter at most $\epsilon d$. 
\end{definition}
Equivalently, $\dim_A(X)$ can be defined as the infimum over all $\gamma>0$ such that there is a constant $C$ for which every ball of radius $r$ contains at most $C\epsilon^{-\gamma}$ $\epsilon r$-separated points.

Lastly, we define the Nagata dimension $\dim_N(X)$ of a metric space $X$. Call a family of subsets $\{B_i\}$ of $X$ \textit{$D$-bounded} if each $B_i$ has diameter $\leq D$. For $s>0$, the \textit{$s$-multiplicity} $\leq n$ of the family $\{B_i\}$ is the minimal $n$ such that every subset of $X$ with diameter $\leq s$ meets at most $n$ members of the family.

\begin{definition}\label{def:nagata}
The \textit{Nagata dimension} of $X$, which we denote $\dim_N X$, is the minimal integer $n$ with the following property: there exists $c>0$ such that, for all $s>0$, $X$ has a $cs$-bounded covering with $s$-multiplicity at most $n+1$.
\end{definition}
The Nagata dimension is clearly a quantitative analog of the Lebesgue covering definition of topological dimension, introduced at the start of subsection \ref{subsec:intro1}.

Each of the five notions of dimension defined above (topological, Lipschitz, Hausdorff, Assouad, and Nagata) is easily seen to be invariant under bi-Lipschitz homeomorphisms.

We have the following relationship between the above dimensions, for all separable metric spaces $X$:
\begin{equation}\label{eq:dimensionrelations1}
\dim_T(X) \leq \dim_N(X) \leq \dim_A(X)
\end{equation}
(see \cite[Theorem 2.2]{LS} and \cite[Theorem 1.1]{LDR}), and
\begin{equation}\label{eq:dimensionrelations2}
\dim_T(X) \leq \dim_H(X) \leq \dim_A(X)
\end{equation}
(see \cite[Theorem 8.13 and Exercise 10.6]{He}). Each of the above inequalities may be strict; we refer the reader to the references above for examples.

In Section \ref{sec:relation}, we explain where $\dim_L(X)$ does (and does not) fit into the above lists.

\section{Products and unions}\label{sec:productsunions}
We begin by studying the behavior of Lipschitz dimension under products and unions.

\begin{proposition}\label{lem:products}
Let $X$ and $Y$ be metric spaces with 
$$\dim_L (X) \leq m \text{ and } \dim_L (Y) \leq n.$$
Then 
$$ \dim_L (X\times Y) \leq m+n.$$
\end{proposition}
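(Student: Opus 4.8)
The plan is to build a Lipschitz light map $X \times Y \to \RR^{m+n}$ directly out of the given ones. By hypothesis there are Lipschitz light maps $f\colon X \to \RR^m$ and $g\colon Y \to \RR^n$, say with constants $C_1$ and $C_2$ in the sense of Definition \ref{def:LL2}. The obvious candidate is the ``product map''
$$ F\colon X\times Y \to \RR^{m+n}, \qquad F(x,y) = (f(x),g(y)),$$
where we identify $\RR^{m+n} = \RR^m \times \RR^n$ and regard the latter as a subset of $\RR^{m+n}$ with its Euclidean metric. That $F$ is Lipschitz is immediate: since $d_X(x,x'), d_Y(y,y') \leq d((x,y),(x',y'))$ for the $\ell_\infty$ product metric, a short estimate gives that $F$ is $\sqrt{C_1^2 + C_2^2}$-Lipschitz.

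The substance is verifying the second bullet of Definition \ref{def:LL2} for $F$. First I would fix $r>0$ and a set $W \subseteq \RR^{m+n}$ with $\diam(W) \leq r$, and let $\pi_1, \pi_2$ be the two coordinate projections of $\RR^{m+n}$ onto $\RR^m$ and $\RR^n$. Because these projections are $1$-Lipschitz, $\diam(\pi_1(W)) \leq r$ and $\diam(\pi_2(W)) \leq r$, and clearly $F^{-1}(W) \subseteq f^{-1}(\pi_1(W)) \times g^{-1}(\pi_2(W))$. The key observation is that, in the sup metric, an $r$-path in $F^{-1}(W)$ \emph{decouples}: if $(x_1,y_1),\dots,(x_k,y_k)$ is an $r$-path, then $d_X(x_i,x_{i+1}) \leq r$ and $d_Y(y_i,y_{i+1}) \leq r$ for each $i$, so $(x_1,\dots,x_k)$ is an $r$-path in $f^{-1}(\pi_1(W))$ and $(y_1,\dots,y_k)$ is an $r$-path in $g^{-1}(\pi_2(W))$. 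Now take any two points $(x,y)$ and $(x',y')$ in the same $r$-component of $F^{-1}(W)$, joined by such an $r$-path. Lipschitz lightness of $f$ (applied to $W'=\pi_1(W)$, which has diameter $\leq r$) places $x$ and $x'$ in a common $r$-component of $f^{-1}(\pi_1(W))$, giving $d_X(x,x') \leq C_1 r$; likewise $d_Y(y,y') \leq C_2 r$. Hence $d((x,y),(x',y')) = \max\{d_X(x,x'),d_Y(y,y')\} \leq \max\{C_1,C_2\}\,r$, so every $r$-component of $F^{-1}(W)$ has diameter at most $\max\{C_1,C_2\}\,r$. Taking $C = \max\{\sqrt{C_1^2+C_2^2},\, C_1,\, C_2\}$ shows $F$ is Lipschitz light, and therefore $\dim_L(X\times Y) \leq m+n$.

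I do not expect a serious obstacle here, but the one point requiring care is precisely the decoupling step, which is what makes the whole argument work. It hinges on the fact that the product metric is (comparable to) the $\ell_\infty$ combination of $d_X$ and $d_Y$: this is exactly what guarantees that an $r$-path in $X\times Y$ projects to an $r$-path in each factor, letting me invoke the Lipschitz lightness of $f$ and $g$ separately and then recombine the two diameter bounds through the maximum. Any of the comparable $\ell_p$ product metrics would work identically, since for each of them $d_X$ and $d_Y$ are dominated by the product distance, so $r$-paths still project to $r$-paths and the diameter bounds combine up to a fixed constant; the convention fixed in Section \ref{sec:notation} simply makes the constants cleanest.
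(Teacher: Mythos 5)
Your proof is correct and follows essentially the same route as the paper: both use the product map $F(x,y)=(f(x),g(y))$, observe that an $r$-path in $F^{-1}(W)$ projects to $r$-paths in $f^{-1}(\pi_1(W))$ and $g^{-1}(\pi_2(W))$, and then combine the two diameter bounds via the $\ell_\infty$ product metric. The only differences are cosmetic: you track explicit constants (e.g.\ $\sqrt{C_1^2+C_2^2}$ for the Lipschitz bound) where the paper is content with a single constant $C$.
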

\begin{proof}
Let $Z=X\times Y$ and write $\pi_X$ and $\pi_Y$ for the projections to the two factors. 

Let $f:X\rightarrow \RR^m$ and $g:Y\rightarrow\RR^n$ be Lipschitz light mappings. Let
$$ F=(f,g):Z \rightarrow \RR^{m+n}.$$

Fix $W\subseteq \RR^{m+n}$ with $\diam(W)\leq r$.  We write $\pi_{\RR^m}\colon \RR^{m+n}\rightarrow \RR^m$ for the projection to the first $m$ coordinates and $\pi_{\RR^n}\colon \RR^{m+n}\rightarrow \RR^n$ for the projection to the last $n$ coordinates.

Let $A$ be an $r$-component of $F^{-1}(W)\subseteq Z$. Note that
$$ f(\pi_X(A)) = \pi_{\RR^m}(F(A)) \subseteq \pi_{\RR^m}(W)$$
and
$$ g(\pi_Y(A)) = \pi_{\RR^n}(F(A)) \subseteq \pi_{\RR^n}(W).$$

If $P$ is any $r$-path in $A$, then $\pi_X(P)$ and $\pi_Y(P)$ are $r$-paths in $f^{-1}(\pi_{\RR^m}(W))$ and $g^{-1}(\pi_{\RR^n}(W))$, respectively.

It follows that $\pi_X(P)$ and $\pi_Y(P)$ have diameters controlled by $Cr$, where $C$ is the maximum of the Lipschitz light constants of $f$ and $g$. Thus, $P$ has diameter controlled by $Cr$. Since $P$ was an arbitrary $r$-path in $A$, $\diam(A)\leq Cr$. This proves that $F$ is Lipschitz light, and hence that $\dim_L Z \leq m+n$.
\end{proof}
The inequality in Lemma \ref{lem:products} is of course sometimes attained (e.g., by $\RR\times\RR$) but it may be strict in some cases. The same example following \cite[Theorem 2.6]{LS} shows this: Let $X=\mathbb{Z}$ and $Y=[0,1]$. Then it is easy to see that $\dim_L X = \dim_L Y = 1$. 

However, $\dim_L (X\times Y) = 1$ as well. To show this, we show that the map $f\colon X\times Y\rightarrow \RR$ defined by $f(n,t)=2n+t$ is a bi-Lipschitz embedding: It is clearly Lipschitz. If $n=m$, then $|f(n,t)-f(m,s)|=|t-s|=d((n,t),(m,s))$. Otherwise, 
$$|f(n,t)-f(m,s)|\geq 2|n-m|-|t-s|\geq |n-m| = d((n,t),(m,s)).$$

Next we study unions. While we are able to show that a finite union of spaces with finite Lipschitz dimension has finite Lipschitz dimension, we do not appear to obtain the sharp bound.

\begin{proposition}\label{lem:union}
Let $Z$ be a metric space that can be written as a union $Z=X\cup Y$. Then 
$$ \dim_L (Z) \leq \dim_L(X)+\dim_L(Y).$$
\end{proposition}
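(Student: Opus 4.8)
The plan is to mimic the product argument of Proposition \ref{lem:products}: assuming $\dim_L(X)\le m$ and $\dim_L(Y)\le n$ (the inequality being trivial if either dimension is infinite), pick Lipschitz light maps $f\colon X\to\RR^m$ and $g\colon Y\to\RR^n$ with a common constant $C$, extend both to all of $Z$, and combine them into a single map to $\RR^{m+n}$. Concretely, I would apply the McShane extension theorem coordinatewise to obtain Lipschitz maps $\tilde f\colon Z\to\RR^m$ and $\tilde g\colon Z\to\RR^n$ with $\tilde f|_X=f$ and $\tilde g|_Y=g$, and set $F=(\tilde f,\tilde g)\colon Z\to\RR^{m+n}$, which is automatically Lipschitz. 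The content is in verifying that $F$ is Lipschitz light; the obstruction, absent in Proposition \ref{lem:products}, is that an $r$-path in $Z$ may oscillate between $X$ and $Y$, so that neither $f$ nor $g$ controls it directly.

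To set up the key estimate, fix $W\subseteq\RR^{m+n}$ with $\diam(W)\le r$, let $A$ be an $r$-component of $F^{-1}(W)$, and write $W_1=\pi_{\RR^m}(W)$ and $W_2=\pi_{\RR^n}(W)$, both of diameter at most $r$. Color each point of $Z$ by ``$X$'' if it lies in $X$ and by ``$Y$'' otherwise (so $Y$-colored points lie in $Y\setminus X\subseteq Y$), and let $A_X,A_Y$ be the induced partition of $A$. Since $F(A)\subseteq W$, every $X$-colored point of $A$ is mapped by $f=\tilde f$ into $W_1$, and every $Y$-colored point by $g=\tilde g$ into $W_2$. The heart of the proof is to bound $\diam(A_X)$ (and symmetrically $\diam(A_Y)$). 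Given $x,x'\in A_X$, I would take an $r$-path $P$ in $A$ joining them and look at its $X$-colored points in order. Each maximal run of $Y$-colored points lying between two consecutive $X$-colored points of $P$ is an $r$-path in $g^{-1}(W_2)$, hence has diameter at most $Cr$ by the Lipschitz lightness of $g$. ``Jumping over'' each such run shows that consecutive $X$-colored points of $P$ lie within $(C+2)r$ of one another, so the $X$-colored points of $P$ (which include $x$ and $x'$) form a $(C+2)r$-path inside $f^{-1}(W_1)$. Applying Definition \ref{def:LL2} to $f$ at scale $(C+2)r$ with the set $W_1$ (whose diameter is at most $r\le(C+2)r$) bounds the relevant $(C+2)r$-component by $C(C+2)r$, giving $d(x,x')\le C(C+2)r$ and thus $\diam(A_X)\le C(C+2)r$; the same argument bounds $\diam(A_Y)$. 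Finally, if both parts are nonempty, the $r$-connectivity of $A$ forces a color change along some $r$-path, yielding consecutive points of opposite colors at distance $\le r$, so the triangle inequality gives $\diam(A)\le 2C(C+2)r+r$. Hence $F$ is Lipschitz light and $\dim_L(Z)\le m+n$.

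The main obstacle is exactly the oscillation of $r$-paths between $X$ and $Y$, and the device that overcomes it is the observation that every excursion into the ``wrong'' set is short—of diameter at most $Cr$ by the Lipschitz lightness of the other map—so it can be skipped at the cost of inflating the working scale from $r$ to $(C+2)r$. I expect this is also where sharpness is lost: both the passage to scale $(C+2)r$ and the use of the full product target $\RR^{m+n}$ are wasteful, which is consistent with the remark that this bound need not be optimal.
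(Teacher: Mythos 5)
Your proof is correct and follows essentially the same route as the paper's: McShane extension of both maps, the combined map $F=(\tilde f,\tilde g)\colon Z\to\RR^{m+n}$, and the key device of skipping excursions into the other set (which have diameter at most $Cr$ by the other map's Lipschitz lightness) to produce a $(C+2)r$-path lying in one piece, to which Lipschitz lightness is then applied at the inflated scale. The only difference is bookkeeping---the paper bounds an arbitrary $r$-path directly (assuming WLOG it starts in $X$) via its sub-path of $X$-points, while you bound $\diam(A_X)$ and $\diam(A_Y)$ separately and glue at a color change---yielding comparable constants ($(C+2)^2r$ in the paper versus $\bigl(2C(C+2)+1\bigr)r$ here).
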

\begin{proof}
Write $m=\dim_L(X)$ and $n=\dim_L(Y)$. Of course, if either is infinite, then there is nothing to prove.

Let $f:X\rightarrow \RR^m$ and $g:Y\rightarrow \RR^n$ be Lipschitz light. We may assume that both are Lipschitz light with constant $C\geq 1$. By McShane's extension theorem (\cite[Theorem 6.2]{He}), we may extend both mappings to Lipschitz mappings defined on all of $Z$, though of course they will not necessarily be Lipschitz light on the entire domain $Z$.

Let $F:Z\rightarrow \RR^{n+m}$ be defined by $F(z) = (f(z), g(z))$. We claim that $F$ is Lipschitz light.

Let $W\subseteq\RR^{n+m}$ be a set of diameter at most $r>0$, and let 
$$P=(x_1, x_2, \dots, x_k)$$
be an $r$-path in $F^{-1}(W)\subseteq Z$. Without loss of generality, assume that $x_1\in X$.

We make the following immediate observation: 
\begin{equation}\label{eq:unionobs}
\text{If $A\geq 1$, then any $Ar$-path $Q$ contained in $P\cap X$ or $P\cap Y$ has diameter at most $CAr$}
\end{equation}
Indeed, for such a path $Q$, either $f$ or $g$ maps it into a set of diameter at most $r\leq Ar$, and both these maps are Lipschitz light on their respective original domains $X$ and $Y$.

We now define a sub-path $P' \subseteq P$ as follows. 

Let $i_1=1$. For each $j>1$, inductively set $i_j$ to be the smallest index greater than $i_{j-1}$ such that $x_{i_j}\in X$. Continue this until there is no such index $i_j$. We obtain a sub-path
$$ P' = (x_{i_1}, x_{i_2}, \dots, x_{i_\ell})\subseteq P.$$
Observe that if $i_j> i_{j-1}+1$, then the entire sub-path of $P$ between from index $i_{j-1}+1$ to $i_j - 1$ is contained in $Y$, since it is disjoint from $X$. Thus, by \eqref{eq:unionobs}, the diameter of this sub-path is at most $Cr$. The same holds for the sub-path between index $i_\ell$ and the last index $k$, if it so happens that $i_\ell < k$.

Thus,
$$ d(x_{i_{j-1}}, x_{i_j}) \leq (C+2)r \text{ for each } j,$$
i.e., $P'$ is a $(C+2)r$-path, and moreover
$$ P \subseteq \overline{N}_{Cr}(P').$$

Since $P'$ is a $(C+2)r$-path that is entirely contained in $X$, it follows again from \eqref{eq:unionobs} that
$$\diam(P')\leq C(C+2)r.$$
Hence,
$$\diam(P) \leq \diam(P')+2(C+2)r \leq (C+2)^2r.$$
Thus, $F$ is Lipschitz light and so $\dim_L(Z) \leq n+m$.
\end{proof}
Of course, Lemma \ref{lem:union} implies that any finite union of spaces with finite Lipschitz dimension has finite Lipschitz dimension. 

If true, the natural bound in Proposition \ref{lem:union} would be to replace the sum by the maximum:
\begin{question}
If $Z=X\cup Y$, is $\dim_L(Z) \leq \max\left(\dim_L(X), \dim_L(Y)\right)$?
\end{question}

\section{Gromov-Hausdorff limits and weak tangents}\label{sec:GH}

\subsection{Convergence of metric spaces}
We will use the notion of convergence of ``mapping packages'', a version of Gromov-Hausdorff convergence, that is described in Chapter 8 of \cite{DS97}. Expositions of this material are also given in \cite{Keith} and \cite[Section 2.1]{GCD16}.

\begin{definition}\label{sets}
We say that a sequence $\{F_j\}$ of non-empty, closed subsets of some Euclidean space $\mathbb{R}^N$ converges to a non-empty closed set $F\subseteq\mathbb{R}^N$ if
$$ \lim_{j\rightarrow\infty} \sup_{x\in F_j\cap B(0,R)} \dist(x,F) = 0 $$
and
$$  \lim_{j\rightarrow\infty} \sup_{y\in F\cap B(0,R)} \dist(y,F_j) = 0 $$
for all $R>0$. 
\end{definition}

We now move on to defining convergence of mappings.
\begin{definition}\label{maps}
Suppose $\{F_j\}$ is a sequence of closed sets converging to a closed set $F$ in $\mathbb{R}^N$ as in the previous definition. Let $Y$ be a metric space and $f_j:F_j\rightarrow Y$, $f:F\rightarrow Y$ be mappings. We say that $\{f_j\}$ converges to $f$ if for each sequence $\{x_j\}$ in $\mathbb{R}^N$ such that $x_j\in F_j$ for all $j$ and $x_j\rightarrow x\in F$, we have that
$$ \lim_{j\rightarrow\infty} f_j(x_j) = f(x). $$
\end{definition}

We have the following compactness statements for these notions of convergence:
\begin{lemma}[\cite{DS97}, Lemmas 8.2 and 8.6]\label{lem:Rncompactness}
Let $\{F_j\}$ be a sequence of non-empty, closed subsets of $\RR^n$ that all intersect a fixed ball $B(0,r)$. Let $f_j\colon F_j \rightarrow \RR^m$ be $L$-Lipschitz mappings. 

Then there is a subsequence along which $\{F_j\}$ converges to a non-empty, closed subset $F$ of $\RR^n$ (in the sense of Definition \ref{sets}) and $\{f_j\}$ converges to a $L$-Lipschitz mapping $f\colon F\rightarrow \RR^m$ (in the sense of Definition \ref{maps}).
\end{lemma}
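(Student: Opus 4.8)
The plan is to prove the two conclusions in turn — first the subsequential convergence of the sets $\{F_j\}$, then that of the maps $\{f_j\}$ — each by a diagonal extraction over a countable dense set, in the spirit of the Blaschke selection theorem and the Arzel\`a--Ascoli theorem respectively.

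For the sets, I would work with the distance functions $d_j(x) = \dist(x, F_j)$, which are all $1$-Lipschitz on $\RR^n$. Since every $F_j$ meets $B(0,r)$, each $d_j$ is bounded on any fixed ball, so by Arzel\`a--Ascoli and a diagonal argument over an exhaustion by balls $B(0,R)$ I can pass to a subsequence along which $d_j \to d$ uniformly on compact sets, where $d$ is $1$-Lipschitz. I would then define the candidate limit $F = \{x : d(x) = 0\}$, which is closed. To see that $F$ is non-empty, choose $y_j \in F_j \cap \overline{B}(0,r)$, extract a convergent subsequence $y_j \to y$, and note that $d(y) = \lim_j \dist(y, F_j) \leq \lim_j |y - y_j| = 0$. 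The two conditions of Definition \ref{sets} then follow from the uniform convergence $d_j \to d$: the condition $\sup_{y \in F \cap B(0,R)} \dist(y,F_j) \to 0$ is immediate since $d \equiv 0$ on $F$; and the condition $\sup_{x \in F_j \cap B(0,R)} \dist(x,F) \to 0$ I would verify by contradiction, extracting (for a putative sequence $x_j \in F_j \cap B(0,R)$ staying $\epsilon$-far from $F$) a convergent subsequence $x_j \to x$ and observing that $d(x) = \lim_j \dist(x, F_j) \leq \lim_j|x - x_j| = 0$, so $x \in F$, contradicting $\dist(x_j, F) \geq \epsilon$.

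For the maps, fix a countable dense subset $\{z_k\}$ of $F$. Using the set convergence just established, for each $k$ I choose $z_{k,j} \in F_j$ with $z_{k,j} \to z_k$. The $L$-Lipschitz bound together with the boundedness of $\{f_j(y_j)\}$ for $y_j \in F_j \cap \overline{B}(0,r)$ keeps each sequence $\{f_j(z_{k,j})\}_j$ bounded in $\RR^m$, so a further diagonal extraction yields $f_j(z_{k,j}) \to w_k$ for every $k$. I would set $f(z_k) = w_k$; passing to the limit in $|f_j(z_{k,j}) - f_j(z_{k',j})| \leq L|z_{k,j} - z_{k',j}|$ shows that $f$ is $L$-Lipschitz on the dense set $\{z_k\}$, hence extends uniquely to an $L$-Lipschitz map $f\colon F \to \RR^m$. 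Finally, to verify convergence in the sense of Definition \ref{maps}, given $x_j \in F_j$ with $x_j \to x \in F$, I would approximate $x$ by some $z_k$ and split $f_j(x_j) - f(x)$ into three pieces controlled respectively by $L|x_j - z_{k,j}|$, the convergence $f_j(z_{k,j}) \to w_k$, and $L|z_k - x|$, letting $j \to \infty$ and then $z_k \to x$.

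The main subtlety — and the point I would flag explicitly — is that without some anchoring of the targets the statement is false (take $F_j = \{0\}$ and $f_j(0) = j$): the needed boundedness of $\{f_j(y_j)\}$ is part of the mapping-package normalization, and after translating in the target one may assume it without loss of generality. The only genuinely technical step is the second (far-from-$F$) half of the set convergence, where uniform convergence of the distance functions on compact sets, rather than mere pointwise convergence, is what makes the contradiction argument go through.
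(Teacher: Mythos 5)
Your proof is correct, and the comparison here is somewhat degenerate: the paper does not prove this lemma at all, but imports it from \cite{DS97} (Lemmas 8.2 and 8.6). What you have written is the standard self-contained argument, presumably close to the one in \cite{DS97}: Arzel\`a--Ascoli applied to the $1$-Lipschitz distance functions $x\mapsto\dist(x,F_j)$, with $F$ recovered as the zero set of the limit function, gives the set convergence; a diagonal extraction over a countable dense subset of $F$, the uniform Lipschitz bound, and the three-term estimate give the map convergence. Every individual step checks out. The most valuable part of your write-up is the flag you raise at the end, which is genuinely correct: as literally stated in the paper the lemma is false (take $F_j=\{0\}$ and $f_j(0)=j$), because the transcription drops the hypothesis that the $f_j$ be uniformly bounded at some point of $F_j\cap B(0,r)$ (equivalently, uniformly bounded on bounded sets). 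This hypothesis is needed exactly where you say it is, it is imposed explicitly by the paper in the analogous Proposition \ref{sublimit}, and it holds in every application of the present lemma in the paper (for instance, the rescaled maps in Lemma \ref{lem:canonicaltangent} send $0$ to $0$), so nothing downstream is affected. Be slightly careful, though, in calling the fix a ``without loss of generality'': translating in the target proves convergence of the modified maps $f_j-f_j(y_j)$, not of the original $f_j$, so what one really obtains is a corrected statement with an added anchoring hypothesis rather than the statement as printed.

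One minor quibble with your closing sentence: in your contradiction argument for the first condition of Definition \ref{sets} (a sequence $x_j\in F_j\cap B(0,R)$ staying $\epsilon$-far from $F$), the key identity $d(x)=\lim_j\dist(x,F_j)$ uses only \emph{pointwise} convergence of the distance functions at the single limit point $x$. It is the other condition, $\sup_{y\in F\cap B(0,R)}\dist(y,F_j)\to 0$, that directly invokes uniform convergence on compact sets (or, equivalently, pointwise convergence plus the equicontinuity coming from the $1$-Lipschitz bound). So the attribution of where uniformity is essential is reversed, though since you have uniform convergence available this is a remark about exposition, not a gap in the proof.
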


Now we begin to define convergence for general metric spaces and mappings.

\begin{definition}\label{spaceconvergence}
A sequence of pointed metric spaces $\{(X_j, d_j, p_j)\}$ converges to a pointed metric space $(X,d,p)$ if the following conditions hold. There exists $\alpha\in(0,1]$, $N\in\mathbb{N}$, and $L$-bi-Lipschitz embeddings $e_j:(X_j, d_j^\alpha)\rightarrow\mathbb{R}^N$, $e:(X,d^\alpha)\rightarrow\mathbb{R}^N$ with $e_j(p_j) = e(p) = 0$ for all $j$. Furthermore, we require that $e_j(X_j)$ converge to $e(X)$ in the sense of Definition \ref{sets}, and that the real-valued functions $d_j(e_j^{-1}(x), e_j^{-1}(y))$ defined on $e_j(X_j) \times e_j(X_j)$ converge to $d(e^{-1}(x), e^{-1}(y))$ on $e(X) \times e(X)$ in the sense of Definition \ref{maps}.
\end{definition}

We will only use Definition \ref{spaceconvergence} when the metric spaces $\{(X_j, d_j)\}$ and $(X,d)$ are uniformly doubling. In that case, the embeddings $e_j$ and $e$ can always be found, by Assouad's embedding theorem (see \cite{He}, Theorem 12.2).

\begin{definition}\label{mappingpackage}
A \textit{mapping package} consists of a pair of pointed metric spaces $(M, d_M, p)$ and $(N, d_N, q)$ as well as a mapping $g:M\rightarrow N$ such that $g(p)=q$. It is written $\left( (M, d_M, p), (N,d_N,q), g\right)$.
\end{definition}
We slightly abuse notation and call a mapping package ``doubling'' if the underlying spaces are both doubling, and ``uniformly doubling'' if all underlying spaces are doubling with the same doubling constant.

\begin{definition}\label{mappingpackageconvergence}
A sequence of mapping packages $\{((X_j, d_j, p_j), (Y_j, \rho_j, q_j), h_j)\}$ is said to converge to another mapping package $((X, d, p), (Y, \rho, q), h)$ if the following conditions hold. The sequences $\{(X_j, d_j, p_j)\}$ and $\{(Y_j, \rho_j, q_j\}$ converge to $(X,d,p)$ and $(Y,\rho, q)$, respectively, in the sense of Definition \ref{spaceconvergence}. Furthermore, the maps $g_j \circ h_j \circ f_j^{-1}$ converge to $g \circ h \circ f^{-1}$ in the sense of Definition \ref{maps}, where $f_j, g_j, f, g$ are the embeddings of Definition \ref{spaceconvergence}.
\end{definition}

We take this opportunity to remark that the limit of a sequence of mapping packages is unique up to isometry; see \cite[Lemma 8.20]{DS97}. That is, two limits of the same sequence of mappings packages are isometrically equivalent by an isometry that preserves base points and intertwines the mappings.

We often use $\rightarrow$ notation to indicate convergence of a sequence of pointed metric spaces or mapping packages, e.g., $(X_j, d_j, p_j)\rightarrow (X,d,p)$.

The following proposition is a special case of Lemma 8.22 of \cite{DS97}.

\begin{proposition}\label{sublimit}
Let $\{((X_j, d_j, p_j), (Y_j, \rho_j, q_j), h_j)\}$ be a sequence of mapping packages, in which all the metric spaces are complete and uniformly doubling, and in which the maps $h_j$ are equicontinuous and uniformly bounded on bounded sets and satisfy $h_j(p_j)=q_j$. Then there exists a mapping package $((X, d, p), (Y, \rho, q), h)$ that is the limit of a subsequence of $\{((X_j, d_j, p_j), (Y_j, \rho_j, q_j), h_j)\}$.
\end{proposition}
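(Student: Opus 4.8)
The plan is to lift the whole problem into Euclidean space via Assouad's theorem, where the stated compactness result (Lemma \ref{lem:Rncompactness}) together with an Arzel\`a--Ascoli argument applies. Since the spaces are uniformly doubling, Assouad's embedding theorem furnishes a single snowflake exponent $\alpha\in(0,1]$, a single dimension $N$, and a uniform constant $L$ so that each $(X_j,d_j^\alpha)$ admits an $L$-bi-Lipschitz embedding $e_j$ into $\RR^N$; composing with a translation I may assume $e_j(p_j)=0$. Likewise I fix $\beta$, $M$, and embeddings $\tilde e_j$ of $(Y_j,\rho_j^\beta)$ into $\RR^M$ with $\tilde e_j(q_j)=0$. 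All the relevant data now live in Euclidean space: the domain images $e_j(X_j)\subseteq\RR^N$, the target images $\tilde e_j(Y_j)\subseteq\RR^M$, the pulled-back metric functions, and the Euclidean representatives $H_j:=\tilde e_j\circ h_j\circ e_j^{-1}$ of the maps.

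Next I extract subsequences. Since $0$ lies in every $e_j(X_j)$, Lemma \ref{lem:Rncompactness} gives a subsequence along which $e_j(X_j)$ converges (Definition \ref{sets}) to a nonempty closed set $F\ni 0$, and, passing to a further subsequence, $\tilde e_j(Y_j)\to G\ni 0$ as well. The pulled-back distance functions $D_j^X(x,y)=d_j(e_j^{-1}(x),e_j^{-1}(y))$, defined on $e_j(X_j)\times e_j(X_j)\subseteq\RR^{2N}$, satisfy $L^{-1/\alpha}|x-y|^{1/\alpha}\le D_j^X(x,y)\le L^{1/\alpha}|x-y|^{1/\alpha}$; because $1/\alpha\ge 1$, these functions are uniformly Lipschitz on each ball, so applying Lemma \ref{lem:Rncompactness} ball-by-ball and diagonalizing yields a subsequence along which $D_j^X\to D^X$ on $F\times F$. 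The limit $D^X$ inherits symmetry and the triangle inequality, and positive-definiteness from the lower bound, so it is a metric. Setting $X=F$, $d=D^X$, $p=0$, and taking $e$ to be the inclusion (which is $L$-bi-Lipschitz from $(X,d^\alpha)$ by passing the two-sided bound to the limit) realizes $(X_j,d_j,p_j)\to(X,d,p)$ in the sense of Definition \ref{spaceconvergence}. The identical argument produces $(Y_j,\rho_j,q_j)\to(Y,\rho,q)$ with $Y=G$, $\rho=D^Y$, $q=0$.

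It remains to produce $h$ and verify the map convergence required by Definition \ref{mappingpackageconvergence}, namely $H_j\to H$. Because $e_j^{-1}$ is uniformly H\"older, the $h_j$ are equicontinuous, and $\tilde e_j$ is uniformly H\"older, the composites $H_j$ form an equicontinuous family that is uniformly bounded on bounded sets. After a final passage to a subsequence $H_j\to H$ for some continuous $H\colon F\to\RR^M$ in the sense of Definition \ref{maps}. Since each value $H_j(x_j)$ lies in $\tilde e_j(Y_j)$ and $\tilde e_j(Y_j)\to G$ with $G$ closed, every limiting value $H(x)$ lies in $G$, so $H$ maps $F$ into $G$; defining $h:=\tilde e^{-1}\circ H\circ e\colon X\to Y$ and noting $H(0)=\lim H_j(0)=0$ gives $h(p)=q$. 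This assembles the desired limit mapping package, whose uniqueness up to isometry is Lemma 8.20 of \cite{DS97}.

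The main obstacle is the map convergence. The hypotheses only give equicontinuity, not a uniform Lipschitz bound, so the literal statement of Lemma \ref{lem:Rncompactness} does not apply to the $H_j$, and one must invoke (or reprove) the Arzel\`a--Ascoli/diagonalization generalization underlying it --- this is the content of the general form, Lemma 8.6 of \cite{DS97}. The second delicate point is compatibility: the three separately extracted convergences (of the domain sets, the target sets, and the maps) must be shown to fit together consistently, so that the limiting map genuinely takes values in the limiting target $G$ and realizes the convergence of the full package rather than merely of its pieces.
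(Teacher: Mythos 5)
Your proof is correct, and it is worth noting how it relates to the paper's treatment: the paper does not prove this proposition at all, but simply cites it as a special case of Lemma 8.22 of \cite{DS97}. What you have written is essentially the argument underlying that citation, specialized to the present setting. Since Definitions \ref{spaceconvergence} and \ref{mappingpackageconvergence} are already phrased through Assouad embeddings into a fixed $\RR^N$, your strategy --- fix uniform Assouad data (possible precisely because of the uniform doubling hypothesis), extract set convergence via Lemma \ref{lem:Rncompactness}, extract convergence of the pulled-back distance functions, and then handle the Euclidean representatives $H_j$ by Arzel\`a--Ascoli --- is exactly what the reference hides. Your observation that the snowflake bounds $L^{-1/\alpha}|x-y|^{1/\alpha}\leq D^X_j(x,y)\leq L^{1/\alpha}|x-y|^{1/\alpha}$ with $1/\alpha\geq 1$ make the distance functions uniformly Lipschitz on balls is the right device for staying within the paper's stated toolkit, and the two caveats you flag are genuine but benign: Lemma \ref{lem:Rncompactness} as stated requires uniform Lipschitz bounds, while the $H_j$ are only equicontinuous and uniformly bounded on bounded sets, so one must either quote the equicontinuous form of Lemma 8.6 of \cite{DS97} or rerun the standard diagonal argument over a countable dense subset --- routine in either case. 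The compatibility of the three extractions is likewise handled correctly by nesting the subsequences and using closedness of $G$ to see that $H(F)\subseteq G$. The net gain of your route over the paper's is self-containedness: it makes visible where uniform doubling enters (uniform $\alpha$, $N$, $L$ in Assouad's theorem) and where equicontinuity and uniform boundedness enter (compactness of the $H_j$, and $h(p)=q$ in the limit), neither of which is apparent from the bare citation.
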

Here, the assumption that the $\{h_j\}$ are \textit{equicontinuous and uniformly bounded on bounded sets} means that for each $R>0$ and $\epsilon>0$, there is $\delta>0$ such that 
$$\rho_j(h_j(x), h_j(y)) < \epsilon \text{ for all } x,y \in B_{X_j}(p_j, R) \text{ with } d_j(x,y)<\delta$$
and
$$ \sup_j \sup_{x\in B_{X_j}(p_j,R)} \rho_j(h_j(x),q_j) < \infty.$$
In particular, this assumption is satisfied when the $h_j$ are Lipschitz or snowflake maps with constants controlled independent of $j$, which is how we will always use this result.

We will now describe some consequences of the convergence of a sequence of mapping packages, which are Lemmas 8.11 and 8.19 of \cite{DS97}.

\begin{proposition}\label{almostisos1}
Suppose a sequence of pointed metric spaces $\{(X_k, d_k, p_k)\}$ converges to the pointed metric space $(X,d,p)$, in the sense of Definition \ref{spaceconvergence}.

Then there exist (not necessarily continuous) mappings $\phi_k:X\rightarrow X_k$ and $\psi_k: X_k \rightarrow X$ such that:
\begin{enumerate}[(i)]
\item\label{almostisos1i} For all $k$, $\phi_k(p) = p_k$ and $\psi_k(p_k) = p$.

\item\label{almostisos1ii} For all $R>0$,
$$ \lim_{k\rightarrow\infty} \sup\{d_X(\psi_k(\phi_k(x), x) : x\in B_X(p, R) \} = 0$$
and
$$ \lim_{k\rightarrow\infty} \sup\{d_{X_k}(\phi_k(\psi_k(x), x) : x\in B_{X_k}(p_k, R) \} = 0.$$

\item For all $R>0$,
$$ \lim_{k\rightarrow\infty} \sup\{|d_{X_k}(\phi_k(x) , \phi_k(y)) - d_X(x,y)| : x,y\in B_X(p, R) \} = 0$$
and
$$ \lim_{k\rightarrow\infty} \sup\{|d_{X}(\psi_k(x) , \psi_k(y)) - d_{X_k}(x,y)| : x,y\in B_{X_k}(p_k, R) \} = 0.$$
\end{enumerate}
\end{proposition}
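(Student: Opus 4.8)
The plan is to realize everything inside the common Euclidean space $\RR^N$ supplied by Definition \ref{spaceconvergence} and to build $\phi_k$ and $\psi_k$ by a nearest-point selection. Write $F_k=e_k(X_k)$ and $F=e(X)$, so that $F_k\to F$ in the sense of Definition \ref{sets} and the pulled-back distance functions $\rho_k(u,v):=d_k(e_k^{-1}(u),e_k^{-1}(v))$ on $F_k\times F_k$ converge to $\rho(u,v):=d(e^{-1}(u),e^{-1}(v))$ on $F\times F$ in the sense of Definition \ref{maps}. Since each $e_k$ is $L$-bi-Lipschitz from $(X_k,d_k^\alpha)$ into $\RR^N$, I would record the two-sided ``snowflake'' bound $L^{-1}|u-v|\le \rho_k(u,v)^\alpha\le L|u-v|$ for $u,v\in F_k$, and likewise for $\rho$ on $F$; these make the family $\{\rho_k\}$ equicontinuous on bounded sets and show that $\rho$ is continuous.

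Given $x\in X$, I set $\phi_k(x)=e_k^{-1}(u)$ where $u\in F_k$ is chosen with $|u-e(x)|\le \dist(e(x),F_k)+1/k$; symmetrically, given $x'\in X_k$ I set $\psi_k(x')=e^{-1}(v)$ with $v\in F$ chosen so that $|v-e_k(x')|\le \dist(e_k(x'),F)+1/k$. No continuity is required of $\phi_k,\psi_k$, so these arbitrary choices are harmless. Because $0\in F\cap F_k$ and $e(p)=e_k(p_k)=0$, I may take the selected near-points of the basepoints to be $0$ itself, giving $\phi_k(p)=p_k$ and $\psi_k(p_k)=p$ and establishing (\ref{almostisos1i}).

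For the almost-inverse property (\ref{almostisos1ii}) I would argue with purely Euclidean estimates. Fix $R>0$ and put $R'=LR^\alpha$, so that $e$ and $e_k$ send $B_X(p,R)$ and $B_{X_k}(p_k,R)$ into $\obar{B}(0,R')\cap F$ and $\obar{B}(0,R')\cap F_k$, respectively. For $x\in B_X(p,R)$ the point $u=e_k(\phi_k(x))$ satisfies $|u-e(x)|\le \sup_{\xi\in F\cap B(0,R')}\dist(\xi,F_k)+1/k\to 0$ uniformly in $x$, by the second clause of Definition \ref{sets}; then the point $v=e(\psi_k(\phi_k(x)))$ chosen near $u$ in $F$ satisfies $|v-e(x)|\le 2|u-e(x)|+1/k\to 0$ uniformly. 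The snowflake bound for $e$ now gives $d_X(\psi_k(\phi_k(x)),x)=\rho(v,e(x))\le (L|v-e(x)|)^{1/\alpha}\to 0$ uniformly on $B_X(p,R)$. The estimate for $\phi_k\circ\psi_k$ is identical with the roles of $F$ and $F_k$ interchanged, using the first clause of Definition \ref{sets} and the snowflake bound for $e_k$.

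The main work is the almost-isometry property (iii), where distances are compared across the moving spaces and the convergence $\rho_k\to\rho$ of Definition \ref{maps} genuinely enters. I would prove it by contradiction: if uniformity failed on some $B_X(p,R)$, there would be $\epsilon>0$ and points $x_k,y_k\in B_X(p,R)$ with $|d_{X_k}(\phi_k(x_k),\phi_k(y_k))-d_X(x_k,y_k)|\ge\epsilon$. Passing to a subsequence, compactness of $F\cap\obar{B}(0,R')$ lets me assume $e(x_k)\to \xi$ and $e(y_k)\to\eta$ in $F$. The selected points $u_k=e_k(\phi_k(x_k))$ and $w_k=e_k(\phi_k(y_k))$ lie in $F_k$ and, by the uniform Euclidean smallness used above, converge to $\xi$ and $\eta$; hence Definition \ref{maps}, applied to the convergence $\rho_k\to\rho$, yields $d_{X_k}(\phi_k(x_k),\phi_k(y_k))=\rho_k(u_k,w_k)\to\rho(\xi,\eta)$, while continuity of $\rho$ gives $d_X(x_k,y_k)=\rho(e(x_k),e(y_k))\to\rho(\xi,\eta)$, contradicting the lower bound $\epsilon$. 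The statement for $\psi_k$ is symmetric. I expect this compactness-plus-Definition \ref{maps} step to be the only real obstacle; everything else reduces to the uniform Hausdorff convergence of the sets $F_k$ together with the uniform snowflake bounds on the embeddings.
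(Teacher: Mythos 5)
Your proof is correct. Note that the paper itself contains no argument for this proposition: it is quoted directly from \cite{DS97} (Lemmas 8.11 and 8.19 there), so the only ``in-paper proof'' is a citation, and what you have written supplies the missing details. Your route --- realizing all the spaces inside the common $\RR^N$ via the snowflake embeddings of Definition \ref{spaceconvergence}, defining $\phi_k$ and $\psi_k$ by approximate nearest-point selection (with the basepoint selection pinned to $0$ to get (i)), deriving (ii) from the Hausdorff convergence of the image sets plus the uniform two-sided snowflake bounds, and deriving (iii) by a compactness-and-contradiction argument that feeds the selected points into the Definition \ref{maps} convergence of the pulled-back distance functions --- is precisely the standard argument, and is essentially how the cited source establishes these lemmas. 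Two small points deserve a line each in a careful write-up: first, the pulled-back metrics $\rho_k$ live on the product sets, so before invoking Definition \ref{maps} one should note that $F_k\times F_k\rightarrow F\times F$ in $\RR^{2N}$, which is immediate from $F_k\rightarrow F$; second, in the $\psi_k$ half of (iii) the subsequential limits $\xi,\eta$ of $e_k(x_k'),e_k(y_k')$ are limits of points in the moving sets $F_k$, so one must observe that they lie in $F$, which follows from the first clause of Definition \ref{sets} together with the closedness of $F$. Both are routine and consistent with the estimates you already have.
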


\begin{proposition}\label{almostisos}
Suppose a sequence of mapping packages 
$$\{((X_k, d_k, p_k), (Y_k, \rho_k, q_k), h_k)\}$$
 converges to a mapping package 
$$((X, d, p), (Y,\rho,q),h),$$
where the mappings $h_k$ are uniformly Lipschitz and satisfy $h_k(p_k)=q_k$. Then there exist (not necessarily continuous) mappings $\phi_k:X\rightarrow X_k$ and $\psi_k: X_k \rightarrow X$ satisfying exactly the conditions of Proposition \ref{almostisos1}, and mappings $\sigma_k:Y\rightarrow Y_k$ and $\tau_k: Y_k \rightarrow Y$ satisfying the analogous properties of Proposition \ref{almostisos1}, such that in addition we have the following:

For all $x\in X$, 
\begin{equation}\label{eq:almostisoseq}
\lim_{k\rightarrow\infty} \tau_k(h_k(\phi_k(x))) = h(x)
\end{equation}
and this convergence is uniform on bounded subsets of $X$.
\end{proposition}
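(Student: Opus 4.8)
The plan is to build the four maps directly from the Euclidean embeddings underlying the convergence and then to chase a single point through the composition, deferring all of the intrinsic estimates (i)--(iii) to Proposition \ref{almostisos1}.

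Recall from Definition \ref{spaceconvergence} that the convergence $(X_k,d_k,p_k)\to(X,d,p)$ is witnessed by bi-Lipschitz embeddings $f_k\colon (X_k,d_k^\alpha)\to\RR^N$ and $f\colon(X,d^\alpha)\to\RR^N$ with $f_k(X_k)\to f(X)$ in the sense of Definition \ref{sets}, and similarly $(Y_k,\rho_k,q_k)\to(Y,\rho,q)$ is witnessed by embeddings $g_k,g$ into some $\RR^M$. I would define the almost-isometries by nearest-point selection: for $x\in X$ let $\phi_k(x)\in X_k$ be the $f_k$-preimage of a point of $f_k(X_k)$ closest to $f(x)$, normalized so that $\phi_k(p)=p_k$, and define $\psi_k,\sigma_k,\tau_k$ analogously. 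From the set convergence of Definition \ref{sets} these satisfy, for every $R>0$,
$$ \sup_{x\in B_X(p,R)} |f_k(\phi_k(x))-f(x)|\to 0 \quad\text{and}\quad \sup_{y\in B_{Y_k}(q_k,R)}|g(\tau_k(y))-g_k(y)|\to 0, $$
and, together with the convergence of the distance functions, they satisfy conditions (i)--(iii) of Proposition \ref{almostisos1}; since these are precisely (the maps produced by) that proposition, I would simply cite it for (i)--(iii) and retain the two displayed estimates, which are the extra structural information I need.

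With the maps in hand, fix $x\in X$ and set $w_k=f_k(\phi_k(x))\in f_k(X_k)$, so that $w_k\to f(x)$ by the first displayed estimate. Definition \ref{mappingpackageconvergence} asserts that $g_k\circ h_k\circ f_k^{-1}$ converges to $g\circ h\circ f^{-1}$ in the sense of Definition \ref{maps}; applying this to the sequence $w_k\to f(x)$ gives
$$ g_k\big(h_k(\phi_k(x))\big)=\big(g_k\circ h_k\circ f_k^{-1}\big)(w_k)\longrightarrow \big(g\circ h\circ f^{-1}\big)(f(x))=g(h(x)) $$
in $\RR^M$. Now write $y_k=h_k(\phi_k(x))\in Y_k$. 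Since $f_k(\phi_k(x))\to f(x)$ and $f_k$ is bi-Lipschitz, $\phi_k(x)$ lies in a bounded subset of $X_k$; as $h_k$ is uniformly Lipschitz with $h_k(p_k)=q_k$, the points $y_k$ lie in a bounded subset of $Y_k$. Hence the second displayed estimate applies and gives $g(\tau_k(y_k))-g_k(y_k)\to 0$, so $g(\tau_k(y_k))\to g(h(x))$. Because $g$ is bi-Lipschitz for a snowflaked metric on $Y$, its inverse $g^{-1}$ is Hölder (hence uniformly) continuous on $g(Y)$, and therefore $\tau_k(y_k)=\tau_k(h_k(\phi_k(x)))\to h(x)$, which is \eqref{eq:almostisoseq}.

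Finally, I would promote this pointwise statement to uniform convergence on each ball $B_X(p,R)$. The composite maps $g_k\circ h_k\circ f_k^{-1}$ are uniformly Hölder continuous on bounded sets (being a composition of the uniformly Lipschitz $h_k$ with the uniformly bi-Lipschitz snowflake embeddings), so the pointwise convergence of Definition \ref{maps} upgrades, by a standard equicontinuity argument, to uniform convergence on bounded subsets of $\RR^N$; combined with the uniform (on bounded sets) validity of the two displayed estimates and the uniform continuity of $g^{-1}$, every step above is uniform for $x\in B_X(p,R)$. The main obstacle is precisely this bookkeeping: one must verify that $\phi_k(x)$, and then $y_k=h_k(\phi_k(x))$, stay in balls of a radius independent of both $k$ and $x\in B_X(p,R)$ — which is where the uniform Lipschitz bound on $h_k$ and the base-point normalizations $\phi_k(p)=p_k$, $h_k(p_k)=q_k$ are essential — so that each ``uniform on bounded sets'' estimate may legitimately be invoked along the chain.
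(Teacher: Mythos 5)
The paper does not actually prove this proposition: it is stated as a direct specialization of Lemmas 8.11 and 8.19 of \cite{DS97}, with no argument given in the text. Your proposal is therefore necessarily a different route, and it is a correct one --- in essence a self-contained reconstruction of the David--Semmes argument. The nearest-point selections through the snowflake embeddings are exactly the right way to produce the almost-isometries, and the key step (feeding the sequence $w_k=f_k(\phi_k(x))\to f(x)$ into the Definition \ref{maps} convergence of $g_k\circ h_k\circ f_k^{-1}$, then transferring back to $Y$ via $\tau_k$ and the uniform continuity of $g^{-1}$ on $g(Y)$) is precisely what extracts \eqref{eq:almostisoseq}; your observation that $f_k(p_k)=0$, $h_k(p_k)=q_k$ and the uniform Lipschitz bound on $h_k$ keep $\phi_k(x)$ and $h_k(\phi_k(x))$ in balls of radius independent of $k$ and of $x\in B_X(p,R)$ is indeed the crux of the uniformity bookkeeping. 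Two small points of hygiene. First, the phrase that you would ``simply cite'' Proposition \ref{almostisos1} for (i)--(iii) is circular as written: that proposition asserts existence of \emph{some} maps with those properties, so it cannot certify properties of the particular maps you constructed; this is harmless, though, because your two displayed estimates together with the convergence of distance functions built into Definition \ref{spaceconvergence} verify (i)--(iii) for your maps by the same nearest-point computation (e.g. $|f(\psi_k(\phi_k(x)))-f(x)|\to 0$ plus bi-Lipschitzness of $f$ on the snowflaked metric gives (ii)). Second, your equicontinuity upgrade to uniform convergence needs bounded subsets of $X$ and of the varying domains $f_k(X_k)$ to be totally bounded; this is not a missing hypothesis, since it comes for free from the embeddings into $\RR^N$ (bounded subsets of $\RR^N$ are totally bounded, and this pulls back through the snowflake), and your computation that the composites $g_k\circ h_k\circ f_k^{-1}$ are in fact uniformly Lipschitz on their domains is what makes the finite-net argument close. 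What your approach buys is transparency and independence from \cite{DS97}; what the paper's citation buys is brevity and the extra generality of the David--Semmes lemmas, which handle limits of mapping packages without the specific base-point normalization you exploit.
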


The following lemma is needed to ensure that subsets of our spaces converge. It is a simple consequence of \cite[Lemma 8.31]{DS97}.

\begin{lemma}\label{subsetconvergence}
Suppose that $\{(X_k, d_k, p_k)\}$ is a sequence of pointed metric spaces that converges to the pointed metric space $\{(X, d, p)\}$ in the sense of Definition \ref{spaceconvergence}. Let $\{F_k\}$ be a sequence of nonempty closed sets with 
$$p_k \in F_k\subset X_k \text{ for each } k.$$ 
Then we can pass to a subsequence to get convergence of $(F_k, d_k, p_k)$ to $(F,d,p)$, where $F$ is a nonempty closed subset of $X$.
\end{lemma}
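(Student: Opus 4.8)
The plan is to run the whole argument inside the common ambient Euclidean space provided by Definition~\ref{spaceconvergence}. Fix $\alpha\in(0,1]$, $N\in\mathbb{N}$, and the $L$-bi-Lipschitz embeddings $e_k\colon (X_k,d_k^\alpha)\to\RR^N$ and $e\colon (X,d^\alpha)\to\RR^N$ witnessing $(X_k,d_k,p_k)\to(X,d,p)$, normalized so that $e_k(p_k)=e(p)=0$. Consider the images $e_k(F_k)\subseteq\RR^N$. Since each $F_k$ is closed in the complete space $X_k$ it is complete, and as $e_k$ is a bi-Lipschitz embedding this forces $e_k(F_k)$ to be a complete, hence closed, subset of $\RR^N$; because $p_k\in F_k$, each $e_k(F_k)$ contains the origin, so they all meet $B(0,1)$.

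Next I would extract a limit set by compactness. Applying the set-convergence conclusion of Lemma~\ref{lem:Rncompactness} (\cite[Lemma~8.2]{DS97}) to $\{e_k(F_k)\}$, I pass to a subsequence along which $e_k(F_k)$ converges, in the sense of Definition~\ref{sets}, to a nonempty closed set $G\subseteq\RR^N$ with $0\in G$. The one point requiring an argument is that $G\subseteq e(X)$: if $y\in G\cap B(0,R)$, then by Definition~\ref{sets} there are $y_k\in e_k(F_k)\subseteq e_k(X_k)$ with $y_k\to y$, and since $e_k(X_k)\to e(X)$ we have $\dist(y_k,e(X))\to0$, so $y\in e(X)$ by closedness. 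I then set $F:=e^{-1}(G)\subseteq X$.

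It remains to verify $(F_k,d_k,p_k)\to(F,d,p)$ using the restricted embeddings $e_k|_{F_k}$ and $e|_F$. The set $F$ is closed (the preimage of the closed set $G$ under the continuous map $e$) and nonempty, since $e(p)=0\in G$ gives $p\in F$; moreover $e(F)=G\cap e(X)=G$, so the set convergence $e_k(F_k)\to e(F)$ holds by construction. The restricted embeddings are $L$-bi-Lipschitz and carry $p_k,p$ to $0$, so the only remaining requirement of Definition~\ref{spaceconvergence} is the convergence of the distance functions $d_k(e_k^{-1}(x),e_k^{-1}(y))$ on $e_k(F_k)\times e_k(F_k)$ to $d(e^{-1}(x),e^{-1}(y))$ on $e(F)\times e(F)$. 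But this is merely the restriction of the corresponding convergence already assumed for the ambient spaces: any sequences $x_k,y_k\in e_k(F_k)$ converging to $x,y\in e(F)$ are in particular sequences in $e_k(X_k)$ converging to $x,y\in e(X)$, so the convergence is inherited verbatim.

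The substantive step is the compactness selection together with the identification $G\subseteq e(X)$; everything else is bookkeeping, since each ingredient of Definition~\ref{spaceconvergence} for the subsets is obtained by restricting data already assumed for the ambient spaces. I expect the only real (and minor) friction to be confirming the closedness of $e_k(F_k)$ in $\RR^N$ that licenses the Euclidean compactness lemma, which rests on the standing assumption that all the spaces are complete and doubling, hence proper.
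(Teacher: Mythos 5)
Your proposal is correct. It is worth noting, though, that the paper does not actually spell out an argument for Lemma \ref{subsetconvergence} at all: it simply records the statement as ``a simple consequence of \cite[Lemma 8.31]{DS97}.'' What you have done is give the direct, self-contained proof that this citation conceals, using only the tools the paper already quotes: you realize everything in the common ambient $\RR^N$ supplied by Definition \ref{spaceconvergence}, extract the limit set $G$ of the images $e_k(F_k)$ via the Euclidean compactness statement (Lemma \ref{lem:Rncompactness}, i.e.\ \cite[Lemmas 8.2 and 8.6]{DS97}), identify $G$ as a subset of $e(X)$ using the set convergence $e_k(X_k)\rightarrow e(X)$ and closedness of $e(X)$, and then verify Definition \ref{spaceconvergence} for $(F_k,d_k,p_k)\rightarrow(e^{-1}(G),d,p)$ by restriction. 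This is almost certainly the same mechanism underlying the cited lemma in \cite{DS97}, so conceptually the routes coincide; the difference is that your version makes explicit the small points the citation hides, and you handle them correctly: closedness of $e_k(F_k)$ in $\RR^N$ (via completeness of $X_k$, which is a standing assumption in this framework), the fact that $0\in G$ (which gives both nonemptiness of $F$ and preservation of the base point, since each $e_k(F_k)$ contains $0$ and $G$ is closed), and the inheritance of the distance-function convergence, which is indeed just a restriction of the ambient hypothesis. The only cosmetic gap is that, strictly speaking, the distance functions live on $e_k(F_k)\times e_k(F_k)\subseteq\RR^{2N}$, so one should note that $e_k(F_k)\times e_k(F_k)\rightarrow e(F)\times e(F)$ in the sense of Definition \ref{sets}; this follows immediately from $e_k(F_k)\rightarrow e(F)$ and is the same bookkeeping already implicit in Definition \ref{spaceconvergence} for the ambient spaces.
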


Lastly, we record the following basic facts about the preservation of mapping properties under limits. 

\begin{lemma}\label{lem:GHmaps}
Let $\{\left((X_k, d_k, p_k), (Y_k, \rho_k,q_k), f_k\right)\}$ be a sequence of mapping packages converging to\\ $\left((X, d, p), (Y, \rho,q), f\right).$
\begin{enumerate}[(i)]
\item If all the $f_k$ are $L$-Lipschitz, then so is $f$.
\item If all the $f_k$ are $L$-bi-Lipschitz, then so is $f$.
\item If all the $f_k$ are $\alpha$-snowflake maps with constant $C$, then so is $f$.
\item If all the $f_k$ are surjective $\alpha$-snowflake maps with constant $C$, then $f$ is a surjective $\alpha$-snowflake map.
\end{enumerate}
\end{lemma}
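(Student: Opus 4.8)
The plan is to prove all four parts by transferring the relevant inequalities from the maps $f_k$ to the limit $f$ via the ``almost-isometry'' maps supplied by Proposition \ref{almostisos}. Applying that proposition (together with Proposition \ref{almostisos1} to the target spaces), I obtain maps $\phi_k \colon X \to X_k$, $\psi_k \colon X_k \to X$, $\sigma_k \colon Y \to Y_k$, $\tau_k \colon Y_k \to Y$ satisfying the conclusions of Proposition \ref{almostisos1} (the latter pair in the analogous form for $Y$), together with the key fact \eqref{eq:almostisoseq} that $\tau_k(f_k(\phi_k(x))) \to f(x)$ for every $x \in X$.

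For parts (i)--(iii) I would fix $x, y \in X$ and run a single transfer computation. By \eqref{eq:almostisoseq} and continuity of $\rho$,
$$ \rho(f(x), f(y)) = \lim_{k\to\infty} \rho\big(\tau_k(f_k(\phi_k(x))), \tau_k(f_k(\phi_k(y)))\big). $$
Since $\tau_k$ nearly preserves distances on bounded sets (property (iii) of Proposition \ref{almostisos1}, in its analog for $Y$), this limit equals $\lim_k \rho_k(f_k(\phi_k(x)), f_k(\phi_k(y)))$. Applying the hypothesis on $f_k$ --- the $L$-Lipschitz bound, the two-sided $L$-bi-Lipschitz bounds, or the $\alpha$-snowflake bounds, respectively --- converts this into bounds involving $d_k(\phi_k(x), \phi_k(y))$. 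Finally, because $\phi_k$ also nearly preserves distances on bounded sets, $d_k(\phi_k(x), \phi_k(y)) \to d(x,y)$, and passing to the limit yields exactly the desired inequality for $f$ (e.g.\ $\rho(f(x), f(y)) \leq L\,d(x,y)$ in part (i), and likewise the lower bound in (ii) and the $\alpha$-power bounds in (iii)). All approximation errors vanish because $x$, $y$, and hence every relevant image point, stay in fixed bounded sets.

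Part (iv) needs, beyond the snowflake bounds from (iii), an argument for surjectivity, which I expect to be the main obstacle. Given $y \in Y$, I push it to $\sigma_k(y) \in Y_k$ and use surjectivity of $f_k$ to choose $x_k \in X_k$ with $f_k(x_k) = \sigma_k(y)$. The crucial point is that $\{x_k\}$ stays bounded: since $\sigma_k(q) = q_k$ and $\sigma_k$ nearly preserves distances, $\rho_k(f_k(x_k), q_k) = \rho_k(\sigma_k(y), q_k) \to \rho(y,q)$, and the lower $\alpha$-snowflake bound on $f_k$ then forces $d_k(x_k, p_k)$ to be bounded uniformly in $k$. Consequently $\psi_k(x_k)$ lies in a fixed bounded subset of $X$; as $X$ is complete and doubling, hence proper, I can pass to a subsequence along which $\psi_k(x_k) \to x$ for some $x \in X$.

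It then remains to verify $f(x) = y$. Using properties (ii) and (iii) of Proposition \ref{almostisos1} one checks $d_k(\phi_k(x), x_k) \to 0$, by comparing $\phi_k(x)$ first with $\phi_k(\psi_k(x_k))$ and then with $x_k$. The snowflake bound on $f_k$ gives $\rho_k(f_k(\phi_k(x)), f_k(x_k)) \to 0$, and applying $\tau_k$ (which nearly preserves distances) shows $\tau_k(f_k(\phi_k(x)))$ and $\tau_k(f_k(x_k))$ have vanishing distance. But $\tau_k(f_k(x_k)) = \tau_k(\sigma_k(y)) \to y$ by property (ii) in its analog for $Y$, while $\tau_k(f_k(\phi_k(x))) \to f(x)$ by \eqref{eq:almostisoseq}. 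Hence $f(x) = y$, and surjectivity follows. The main delicacy throughout is the bookkeeping of approximation errors and the verification that every point one touches remains within a bounded region, so that the almost-isometry estimates --- which hold only on bounded sets --- may be applied.
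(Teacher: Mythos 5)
Your proof is correct, but it takes a genuinely different route from the paper's. The paper disposes of (i)--(ii) by citing \cite[Lemma 8.20]{DS97}, notes that (iii) follows ``by the same means,'' and proves (iv) by a compactness trick rather than a pointwise argument: since each $f_k$ satisfies the lower snowflake bound, it is injective, and $f_k^{-1}\colon Y_k \rightarrow (X_k, d_k^\alpha)$ is uniformly bi-Lipschitz; by (iii) (applied through Proposition \ref{sublimit}) one may pass to a subsequence along which the inverse packages $\left((Y_k,\rho_k,q_k),(X_k,d_k^\alpha,p_k),f_k^{-1}\right)$ converge, and the limit of the inverses is defined on all of $Y$ and inverts $f$, giving surjectivity at once. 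Your approach instead reproves (i)--(iii) from scratch via the almost-isometry transfer (essentially reproducing the David--Semmes argument) and establishes (iv) by extracting, for each $y\in Y$, a preimage as a limit point of the $\psi_k(x_k)$ --- which requires exactly the boundedness and properness bookkeeping you flag (properness of $X$ follows since the limit is complete and doubling in the paper's framework). What the paper's inverse-package trick buys is precisely the avoidance of that bookkeeping; what your argument buys is self-containedness and explicitness. One wrinkle to note: Proposition \ref{almostisos} is stated only for \emph{uniformly Lipschitz} $h_k$, so for the snowflake cases (iii)--(iv) you should either observe that it extends to equicontinuous families (as the remark following Proposition \ref{sublimit} indicates), or apply the machinery to the snowflaked domains $(X_k, d_k^\alpha, p_k)$, on which the $f_k$ become uniformly bi-Lipschitz --- the latter being, in effect, the paper's own device.
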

\begin{proof}
The first two statements in the lemma are given in \cite[Lemma 8.20]{DS97}, and the third is easy to verify by the same means. The fourth follows by passing to a subsequence along which the packages
$$ \{\left((Y_k, \rho_k,q_k), (X_k, d_k^\alpha, p_k), (f_k)^{-1}\right)\}$$
converge as well, which we can do by (iii).
\end{proof}

\subsection{Tangents and weak tangents}
We can now define the notion of tangent and weak tangent to a space or mapping package.

\begin{definition}
If $(X,d)$ is a metric space, a \textit{weak tangent} of $X$ is any limit of pointed metric spaces of the form $(X,\lambda_k d, x_k)$, where $\lambda_k>0$ and $x_k\in X$.

If $f:(X,d) \rightarrow (Y,\rho)$ is a mapping, a \textit{weak tangent} of $f$ is any limit of mapping packages of the form
$$ \left( (X,\lambda_k d, x_k), (Y, \lambda \rho_k, f(x_k)),f\right),$$
where $\lambda_k>0$ and $x_k\in X$ are arbitrary.

We denote the collection of weak tangents of $X$ or $f$ by $\WTan(X)$ or $\WTan(f)$, respectively.
\end{definition}

As a special case of the notion of weak tangent, one may force the base points $x_k$ to be fixed and the sequence of scales to tend to infinity, corresponding to the notion of ``blowing up'' the space at a given point. This is the notion of a tangent.

\begin{definition}
If $(X,d)$ is a metric space and $x\in X$, a \textit{ tangent} of $X$ at $x$ is any limit of pointed metric spaces of the form $(X,\lambda_k d, x)$, where $\lambda_k\rightarrow \infty$.

If $f:(X,d) \rightarrow (Y,\rho)$ is a mapping and $x\in X$, a \textit{tangent} of $f$ is any limit of mapping packages of the form
$$ \left( (X,\lambda_k d, x), (Y, \lambda_k \rho, f(x)),f\right),$$
where $\lambda_k\rightarrow \infty$.

We denote the collection of tangents of $X$ at $x$ by $\Tan(X,x)$, and the collections of tangents of $f$ at $x$ by $\Tan(f,x)$.
\end{definition}

Of course, tangents are always weak tangents. Proposition \ref{sublimit} guarantees that a doubling metric space has at least one tangent at each of its points, and that a Lipschitz mapping has a tangent at each point of its domain.

If $F$ is a non-empty, closed subset of $\RR^n$, one may wish to pass to a tangent or weak tangent of $F$ inside $\RR^n$, rather than viewing it simply as an abstract metric space. We have the following simple consequence of the lemmas above.

\begin{lemma}\label{lem:canonicaltangent}
Let $F\subseteq \RR^n$ be a non-empty, closed set, and let $f\colon F\rightarrow \RR^m$ be a Lipschitz mapping. Let $\{x_j\}$ be a sequence of points in $F$ and $\{\lambda_j\}$ a sequence of positive numbers. Then the following statements hold:

\begin{enumerate}[(i)]
\item We may pass to a subsequence along which the sets 
\begin{equation}\label{eq:ct1}
\lambda_j(F - x_j)
\end{equation}
and the mappings
\begin{equation}\label{eq:ct2}
 z\mapsto \lambda_j(f\left( \lambda_j^{-1}z + x_j\right)-f(x_j)).
\end{equation}
converge to a set $\hat{F}\subseteq \RR^n$ and a mapping $\hat{f}\colon \hat{F}\rightarrow \RR^m$, in the sense of Definitions \ref{sets} and \ref{maps}.

Furthermore, $(\hat{F},0)$ is in $\WTan(F)$ and the mapping package 
$$((\hat{F},0),(\RR^m,0),\hat{f})$$
is in $\WTan(f)$.

\item Conversely, if $((Z,p), (\RR^m,0),h)\in \WTan(f)$ arises from the choice of points $\{x_j\}$ and scales $\{\lambda_j\}$, then, after passing to a subsequence, the sets in \eqref{eq:ct1} converge to a set $\hat{F}$ isometric to $(Z,p)$ and the mappings in \eqref{eq:ct2} converge to a mapping on $\hat{F}$ that agrees with $h$, up to composition with an isometry.
\end{enumerate}
\end{lemma}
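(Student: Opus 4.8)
The plan is to observe that, for subsets of $\RR^n$, the abstract weak-tangent convergence of Definitions \ref{spaceconvergence} and \ref{mappingpackageconvergence} is realized concretely by the rescalings in \eqref{eq:ct1} and \eqref{eq:ct2} together with the obvious isometric embeddings into $\RR^n$ and $\RR^m$; the existence of limits then comes for free from Lemma \ref{lem:Rncompactness}, and the converse from uniqueness of limits. For part (i), I would set $F_j = \lambda_j(F - x_j)$ and let $f_j\colon F_j \to \RR^m$ be the map in \eqref{eq:ct2}. Each $F_j$ is closed and contains $0$, and a direct computation shows that if $f$ is $L$-Lipschitz then each $f_j$ is $L$-Lipschitz with $f_j(0)=0$. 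Thus Lemma \ref{lem:Rncompactness} applies and, after passing to a subsequence, produces a nonempty closed set $\hat{F}\subseteq\RR^n$ and an $L$-Lipschitz map $\hat{f}\colon\hat{F}\to\RR^m$ with $F_j\to\hat{F}$ and $f_j\to\hat{f}$ in the senses of Definitions \ref{sets} and \ref{maps}; one reads off $0\in\hat{F}$ and $\hat{f}(0)=0$ from those definitions.

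To identify these concrete limits as weak tangents, I would exhibit the data required by Definition \ref{spaceconvergence} explicitly. Taking $\alpha=1$, the maps $e_j(x)=\lambda_j(x-x_j)$ and $\sigma_j(w)=\lambda_j(w-f(x_j))$ are isometries of $(F,\lambda_j d)$ and $(\RR^m,\lambda_j|\cdot|)$ onto $F_j$ and $\RR^m$ carrying the base points $x_j$ and $f(x_j)$ to $0$, and I take $e$, $\sigma$ to be inclusions. A one-line computation gives $\sigma_j\circ f\circ e_j^{-1}=f_j$ and shows the rescaled distance functions are identically the Euclidean distance, so the convergences supplied by Lemma \ref{lem:Rncompactness} are exactly the convergence of the pointed space $(F,\lambda_j d,x_j)$ to $(\hat{F},0)$ and of the package $((F,\lambda_j d,x_j),(\RR^m,\lambda_j|\cdot|,f(x_j)),f)$ to $((\hat{F},0),(\RR^m,0),\hat{f})$. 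This yields $(\hat{F},0)\in\WTan(F)$ and $((\hat{F},0),(\RR^m,0),\hat{f})\in\WTan(f)$.

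For part (ii), I would apply part (i) to the given sequences $\{x_j\}$ and $\{\lambda_j\}$ to obtain, along a subsequence, concrete limits $\hat{F}$ and $\hat{f}$ such that the packages $((F,\lambda_j d,x_j),(\RR^m,\lambda_j|\cdot|,f(x_j)),f)$ converge to $((\hat{F},0),(\RR^m,0),\hat{f})$. By hypothesis these same packages converge to $((Z,p),(\RR^m,0),h)$, and this persists along the subsequence. The uniqueness of mapping-package limits up to isometry, recorded after Definition \ref{mappingpackageconvergence}, then yields a base-point-preserving isometry $\iota\colon(Z,p)\to(\hat{F},0)$ and an isometry $\kappa$ of $\RR^m$ with $\hat{f}\circ\iota=\kappa\circ h$. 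This says precisely that $\hat{F}$ is isometric to $(Z,p)$ and that $\hat{f}$ agrees with $h$ up to composition with isometries, as required.

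Consistent with the lemma's billing as a simple consequence of the preceding results, there is no genuine difficulty here, only the bookkeeping of matching the abstract definitions to the concrete rescalings. The one point requiring care is that Definition \ref{spaceconvergence} permits an arbitrary target dimension $N$ and an arbitrary snowflake exponent $\alpha$, whereas the canonical picture lives in the fixed space $\RR^n$ with $\alpha=1$; the step reconciling these — and hence the crux of part (ii) — is the appeal to uniqueness of limits, which guarantees that the canonical limit computed inside $\RR^n$ coincides, up to isometry, with any weak tangent arising abstractly from the same sequences.
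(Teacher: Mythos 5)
Your proposal is correct and follows essentially the same route as the paper, which proves this lemma by citing exactly the ingredients you use: the concrete compactness of Lemma \ref{lem:Rncompactness} applied to the rescaled sets and maps, together with the DS97 facts that limits of isometric packages are isometric and that limits of mapping packages are unique up to base-point-preserving isometries intertwining the maps. Your explicit verification of Definition \ref{spaceconvergence} via the affine isometries $e_j(x)=\lambda_j(x-x_j)$, $\sigma_j(w)=\lambda_j(w-f(x_j))$ is just the bookkeeping the paper leaves implicit.
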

\begin{proof}
This is an immediate consequence of Lemma \ref{lem:Rncompactness} and Lemma \ref{sublimit}, and the fact that limits of isometric spaces and mappings are themselves isometric. (See \cite[Lemma 8.12]{DS97}.)
\end{proof}

\subsection{Gromov-Hausdorff limits of Lipschitz light mappings}
In this subsection, we study the Gromov-Hausdorff convergence properties of Lipschitz light mappings, culminating in a characterization result, Theorem \ref{prop:LLGH}, and corresponding consequences for Lipschitz dimension. 

We begin by establishing the persistence of the Lipschitz light property during Gromov-Hausdorff convergence.
\begin{proposition}\label{prop:LLlimit}
Let $\{\left( (X_k,d_k,p_k), (Y_k,\rho_k, q_k), f_k\right)\}$ be a sequence of complete, uniformly doubling mapping packages converging to  $\{((X,d,p), (Y,\rho,q), f)\}$. Assume that each $f_n$ is Lipschitz light with constant $C$, independent of $k$. Then $f$ is Lipschitz light with constant $C$.
\end{proposition}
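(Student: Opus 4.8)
The plan is to verify the two defining conditions of Definition \ref{def:LL2} for the limit map $f$, transferring each condition from the $f_k$ via the almost-isometries provided by Proposition \ref{almostisos}. First I would dispense with the Lipschitz bound: since each $f_k$ is $C$-Lipschitz and the mapping packages converge, Lemma \ref{lem:GHmaps}(i) immediately gives that $f$ is $C$-Lipschitz. The substance is the second condition, controlling the diameters of $r$-components of preimages. To set up, I invoke Proposition \ref{almostisos} to obtain the approximate maps $\phi_k\colon X\to X_k$, $\psi_k\colon X_k\to X$, $\sigma_k\colon Y\to Y_k$, $\tau_k\colon Y_k\to Y$ satisfying the almost-isometry estimates of Proposition \ref{almostisos1} together with the intertwining relation \eqref{eq:almostisoseq}, namely $\tau_k(f_k(\phi_k(x)))\to f(x)$ uniformly on bounded sets.

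Next I fix $r>0$ and a set $W\subseteq Y$ with $\diam(W)\le r$, and let $A$ be an $r$-component of $f^{-1}(W)$; the goal is $\diam(A)\le Cr$. It suffices to bound the diameter of an arbitrary $r$-path $P=(a_0,a_1,\dots,a_m)$ lying in $f^{-1}(W)$, so I would push $P$ forward by $\phi_k$ to obtain a finite sequence $(\phi_k(a_0),\dots,\phi_k(a_m))$ in $X_k$. Because $\phi_k$ is an almost-isometry on the (bounded) ball containing $P$, for $k$ large this image is an $(r+\epsilon)$-path in $X_k$. Moreover, using the intertwining $f_k(\phi_k(a_i))\approx \sigma_k(f(a_i))$ and that $f(a_i)\in W$ has $\diam(W)\le r$, I would show $f_k(\phi_k(a_i))$ lands in a set $W_k\subseteq Y_k$ of diameter at most $r+\epsilon$; thus the $\phi_k$-image is an $(r+\epsilon)$-path in $f_k^{-1}(W_k)$. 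Applying the Lipschitz light property of $f_k$ with parameter $r+\epsilon$ bounds its diameter by $C(r+\epsilon)$, and pulling back through the almost-isometry $\psi_k$ (or directly estimating $d(a_i,a_j)$ via $d_k(\phi_k(a_i),\phi_k(a_j))$) yields $\diam(P)\le C(r+\epsilon)+o(1)$. Letting $k\to\infty$ and then $\epsilon\to 0$ gives $\diam(P)\le Cr$, and since $P$ was arbitrary, $\diam(A)\le Cr$.

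I expect the main obstacle to be bookkeeping the several distinct approximation errors so that a single $r$-component in the limit is faithfully represented by an $r'$-path downstairs with $r'\to r$. The delicate points are: (a) an $r$-path in $X$ becomes only an $(r+\epsilon)$-path in $X_k$, so the Lipschitz light estimate downstairs must be applied at the slightly enlarged scale, and one must check the enlargement washes out in the limit; and (b) the set $W_k$ containing the images $f_k(\phi_k(a_i))$ genuinely has controlled diameter, which requires combining $\diam(W)\le r$, the $C$-Lipschitz (hence uniformly continuous) behavior encoded through $\sigma_k$, and the uniform convergence in \eqref{eq:almostisoseq}. A clean way to handle these uniformly is to restrict attention to a fixed large ball $B_X(p,R)\supseteq P$ at the outset, since all the almost-isometry estimates in Propositions \ref{almostisos1} and \ref{almostisos} are stated with errors tending to zero on each fixed bounded set; one then takes $k$ large enough, depending on $R$ and $\epsilon$, before letting the errors vanish.
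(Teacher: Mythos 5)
Your proposal is correct and follows essentially the same route as the paper's proof: dispatch the Lipschitz bound via Lemma \ref{lem:GHmaps}, push an arbitrary $r$-path forward by $\phi_k$, apply the Lipschitz light property of $f_k$ at the slightly enlarged scale $r+O(\epsilon)$, and pull back through the almost-isometries before letting the errors vanish. The only cosmetic difference is that you compare images in $Y_k$ using $\sigma_k(f(a_i))$, whereas the paper pulls $f_k(P_k)$ back to $Y$ via $\tau_k$ and compares with $f(P)$ --- an inessential variant of the same bookkeeping.
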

\begin{proof}
To begin, the map $f$ is $C$-Lipschitz by Lemma \ref{lem:GHmaps}.

Consider a sequence of mapping packages as in the Proposition. We have ``almost-isometries''  $\phi_k:X\rightarrow X_k$, $\psi_k: X_k \rightarrow X$, $\sigma_k:Y\rightarrow Y_k$, and $\tau_k: Y_k \rightarrow Y$ as in Proposition \ref{almostisos}.

Fix $r>0$ and $W\subseteq Y$ with $\diam(W)\leq r$. Let $U$ be an $r$-component of $f^{-1}(W)\subseteq X$, and $P$ an arbitrary $r$-path in $U$.

We can choose $R>0$ large enough so that $P\subseteq B(p,R/2)$ and $P_k:=\phi_k(P)\subseteq B(p_k,R/2)$ for each $k\in\mathbb{N}$. Let $\epsilon\in (0,r)$ be arbitrary. We may then choose $k\in\mathbb{N}$ sufficiently large so that all distortions of $\phi_k,\psi_k,\sigma_k,\tau_k$ are less than $\epsilon$. In other words, 
$$\sup\{d(\psi_k(\phi_k(x), x) : x\in B_X(p, R) \} <\epsilon,$$
$$ \sup\{d_k(\phi_k(\psi_k(x), x) : x\in B(p_k, R) \} <\epsilon,$$
$$ \sup\{|d_{k}(\phi_k(x) , \phi_k(y)) - d(x,y)| : x,y\in B(p, R) \} <\epsilon,$$
and
$$ \sup\{|d(\psi_k(x) , \psi_k(y)) - d_{k}(x,y)| : x,y\in B(p_k, R) \} <\epsilon,$$
with analogous properties for $\tau_k$ and $\sigma_k$. We may furthermore ensure that
$$| \tau_k(f_k(\phi_k(x))) - f(x)| < \epsilon \text{ for all } x\in B(p,R).$$

Then $P_k=\phi_k(P)$ is an $(r+2\epsilon)$-path in $X_k$. Moreover,
$$\diam(f_k(P_k)) \leq \diam(\tau_k(f_k(P_k))) + 2\epsilon \leq \diam f(P) + 4\epsilon \leq r+4\epsilon.$$
Since $f_k$ is Lipschitz light with constant $C$ and $f_k(P_k)$ is a set of diameter at most $r+4\epsilon$, it follows that
$$ \diam(P_k) \leq C(r+4\epsilon).$$
Lastly, we have
$$ \diam(P) \leq \diam(P_k) + 2\epsilon \leq C(r+4\epsilon)+2\epsilon.$$
Letting $\epsilon$ tend to $0$, we get that 
$$ \diam(P) \leq Cr,$$
which proves that $f$ is Lipschitz light with constant $C$.
\end{proof}

\begin{corollary}\label{cor:LLtan}
If $X$ and $Y$ are complete and doubling, and $f:X\rightarrow Y$ is Lipschitz light with constant $C$, then so is each $\hat{f}\in \WTan(f)$.
\end{corollary}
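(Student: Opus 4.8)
The plan is to realize each weak tangent $\hat{f}$ as a limit of mapping packages to which Proposition \ref{prop:LLlimit} applies directly. By definition, $\hat{f}\in\WTan(f)$ is the limit of a sequence of mapping packages of the form
$$\left((X,\lambda_k d, x_k),\, (Y,\lambda_k\rho, f(x_k)),\, f\right),$$
with $\lambda_k>0$ and $x_k\in X$. Since Proposition \ref{prop:LLlimit} asserts exactly that a limit of complete, uniformly doubling mapping packages whose maps are Lipschitz light with a common constant is again Lipschitz light with that constant, it suffices to verify that each map $f\colon(X,\lambda_k d)\to(Y,\lambda_k\rho)$ is Lipschitz light with constant $C$, and that the rescaled spaces are complete and uniformly doubling.

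The key observation is that the Lipschitz light property, with a fixed constant $C$, is invariant under simultaneous rescaling of the metrics on domain and target by a common factor $\lambda>0$. First I would check the Lipschitz bound: the estimate $\lambda\rho(f(x),f(x'))\le C\lambda d(x,x')$ follows immediately from the $C$-Lipschitz bound for $f\colon(X,d)\to(Y,\rho)$. For the $r$-component condition, I would use the scaling dictionary: a set $W\subseteq Y$ with $\diam_{\lambda\rho}(W)\le r$ has $\diam_\rho(W)\le r/\lambda$, and an $r$-path in $(X,\lambda d)$ is exactly an $(r/\lambda)$-path in $(X,d)$. Hence an $r$-component of $f^{-1}(W)$ measured in $(X,\lambda d)$ coincides with an $(r/\lambda)$-component measured in $(X,d)$; the Lipschitz light hypothesis on $f$ bounds its $d$-diameter by $C(r/\lambda)$, which is precisely a $\lambda d$-diameter bound of $Cr$.

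Completeness and the doubling property (with its constant) are manifestly unchanged by scaling a metric, so the spaces $(X,\lambda_k d)$ and $(Y,\lambda_k\rho)$ are complete and doubling with the same constants as $X$ and $Y$; in particular the sequence of packages is uniformly doubling. With these verifications in hand, Proposition \ref{prop:LLlimit} yields that $\hat{f}$ is Lipschitz light with constant $C$. I do not anticipate a genuine obstacle: the substance of the argument has already been carried out in Proposition \ref{prop:LLlimit}, and the only new content is the elementary scale-invariance of the defining conditions, the mildly delicate point being the bookkeeping that $r$-paths at scale $\lambda d$ correspond to $(r/\lambda)$-paths at scale $d$.
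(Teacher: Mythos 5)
Your proposal is correct and matches the paper's proof exactly: the paper also observes that $f\colon(X,\lambda d)\to(Y,\lambda\rho)$ remains Lipschitz light with constant $C$ for each $\lambda>0$ and then invokes Proposition \ref{prop:LLlimit}. Your write-up simply spells out the scale-invariance bookkeeping (the $r$-path versus $(r/\lambda)$-path dictionary and preservation of completeness and the doubling constant) that the paper leaves implicit.
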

\begin{proof}
We need only observe that the mapping
$$ f:(X,\lambda d)\rightarrow (Y,\lambda\rho)$$
is also Lipschitz light with constant $C$, for each $\lambda>0$, and then apply Proposition \ref{prop:LLlimit}.
\end{proof}

As an immediate consequence, we show that Lipschitz dimension cannot increase when passing to weak tangents.

\begin{corollary}\label{cor:Ldimtan}
If $X$ is doubling, $\dim_L (X) \leq n$ and $(Z,z)\in \WTan(X)$, then $\dim_L (Z) \leq n$.
\end{corollary}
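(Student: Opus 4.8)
The plan is to realize the given weak tangent $(Z,z)$ as the domain of a weak tangent of a Lipschitz light map $f\colon X\to\RR^n$, and then quote Corollary \ref{cor:LLtan}. Since $\dim_L(X)\leq n$, fix such an $f$, Lipschitz light with some constant $C$. Since $(Z,z)\in\WTan(X)$, there are scales $\lambda_k>0$ and base points $x_k\in X$ with $(X,\lambda_k d,x_k)\to(Z,z)$ in the sense of Definition \ref{spaceconvergence}. Note $Z$ is complete and doubling, with the same doubling constant as $X$.

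\textbf{Key steps.} First I would form the sequence of mapping packages
$$ \left( (X,\lambda_k d, x_k),\ (\RR^n, \lambda_k|\cdot|, f(x_k)),\ f \right), $$
where $|\cdot|$ denotes the Euclidean metric. The domain sequence already converges to $(Z,z)$ by hypothesis. For the target, observe that for each $k$ the translation-and-dilation $y\mapsto \lambda_k(y-f(x_k))$ is an isometry from $(\RR^n,\lambda_k|\cdot|)$ onto $(\RR^n,|\cdot|)$ carrying $f(x_k)$ to $0$; hence $(\RR^n,\lambda_k|\cdot|,f(x_k))$ is isometric to $(\RR^n,|\cdot|,0)$ for every $k$, and in particular the target sequence converges to $(\RR^n,|\cdot|,0)$. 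Next I would check the hypotheses of Proposition \ref{sublimit}: all spaces are complete and uniformly doubling, the base-point condition $f(x_k)=q_k$ holds by construction, and the maps are uniformly Lipschitz because scaling domain and target by the same factor $\lambda_k$ preserves the Lipschitz constant of $f$ (so they are in particular equicontinuous and uniformly bounded on bounded sets). Passing to a subsequence via Proposition \ref{sublimit}, the packages converge to a limit package. By uniqueness of limits (\cite[Lemma 8.20]{DS97}) the domain limit remains $(Z,z)$ and the target limit remains $(\RR^n,|\cdot|,0)$, so the limit is a package
$$ \left( (Z,z),\ (\RR^n, 0),\ \hat f \right) \in \WTan(f). $$
Finally, Corollary \ref{cor:LLtan} gives that $\hat f$ is Lipschitz light with constant $C$, so $\hat f\colon Z\to\RR^n$ witnesses $\dim_L(Z)\leq n$.

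\textbf{Main obstacle.} Given the machinery already assembled, no step is genuinely hard; the points requiring care are bookkeeping rather than ideas. The one place to be precise is the identification of the target limit: I must use the homogeneity and self-similarity of Euclidean space to see that rescaling and recentering $\RR^n$ returns $(\RR^n,|\cdot|,0)$, since this is exactly what guarantees the limit map lands in $\RR^n$ (and hence certifies the Lipschitz dimension bound rather than a bound into some abstract tangent of the target). Beyond that, I would verify that the domain limit is unchanged along the subsequence and that the equicontinuity and uniform-boundedness hypotheses of Proposition \ref{sublimit} are met, both of which follow immediately from the uniform Lipschitz bound on $f$.
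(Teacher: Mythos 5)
Your proposal is correct and follows essentially the same route as the paper: the paper's proof also passes to a weak tangent of the Lipschitz light map $f$ via Proposition \ref{sublimit} to obtain $\hat f\colon Z\to\RR^n$, and then invokes Corollary \ref{cor:LLtan}. Your additional bookkeeping (identifying the rescaled target with $(\RR^n,|\cdot|,0)$ and using uniqueness of limits to fix the domain as $(Z,z)$) just makes explicit what the paper leaves implicit.
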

\begin{proof}
If $f:X\rightarrow \RR^n$ is Lipschitz light, then by Proposition \ref{sublimit} there is a mapping $\hat{f}\in \WTan(f)$ mapping from $Z$ to $\RR^n$. The mapping $\hat{f}$ is Lipschitz light by Corollary \ref{cor:LLtan}.
\end{proof}

In fact, we can also characterize Lipschitz light mappings among all Lipschitz mappings by examining their weak tangents.

Before we do so, we need the following lemma.
\begin{lemma}\label{lem:pathconvergence}
Let 
$$ \{(X_n, d_n, p_n)\} \rightarrow (X,d,p)$$
be a converging sequence of complete, uniformly doubling pointed metric spaces. 

Suppose that for each $n$, there is a $\delta_n$-path $P_n \subseteq \overline{B}(p_n,1)\subseteq X_n$ containing $p_n$, and that $\delta_n\rightarrow 0$. 

Then, after passing to a subsequence,
$$ \{(P_n, d_n, p_n)\} \rightarrow (P,d,p),$$
where $P\subseteq X$ is compact and connected.
\end{lemma}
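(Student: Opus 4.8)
The plan is to reduce the statement to the subset-convergence machinery already developed, and then to extract compactness and connectedness from the two hypotheses ($P_n\subseteq\overline{B}(p_n,1)$ and $\delta_n\to 0$) separately. First I would apply Lemma \ref{subsetconvergence}: each $P_n$ is a finite, hence closed and nonempty, subset of $X_n$ containing $p_n$, so that lemma lets me pass to a subsequence along which $(P_n, d_n, p_n)\to (P,d,p)$ for some nonempty closed $P\subseteq X$. It then remains only to show that $P$ is compact and connected. For this I would fix the almost-isometries $\phi_n\colon P\to P_n$ and $\psi_n\colon P_n\to P$ supplied by Proposition \ref{almostisos1} applied to \emph{this} subset convergence (the hypotheses hold, since $P_n$ inherits uniform doubling from $X_n$ and $P$ is closed in the complete space $X$).

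For compactness I would show $P\subseteq\overline{B}(p,1)$. Given $y\in P$, set $y_n=\phi_n(y)\in P_n$; then Proposition \ref{almostisos1}(iii) gives $d_n(y_n,p_n)=d_n(\phi_n(y),\phi_n(p))\to d(y,p)$, while $d_n(y_n,p_n)\le 1$ because $P_n\subseteq\overline{B}(p_n,1)$. Hence $d(y,p)\le 1$. Since $X$ is complete and doubling it is proper, so $\overline{B}(p,1)$ is compact, and $P$ is a closed subset of this ball, hence compact.

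The connectedness is the heart of the argument and the step I expect to be the main obstacle, since it is the only place where the path hypothesis and $\delta_n\to 0$ both enter. I would argue by contradiction: suppose $P=A\sqcup B$ with $A,B$ nonempty, disjoint, and closed, hence compact (as $P$ is), so that $\eta:=\dist(A,B)>0$. Pulling back along $\psi_n$, partition $P_n=A_n\sqcup B_n$ with $A_n=\psi_n^{-1}(A)$ and $B_n=\psi_n^{-1}(B)$. Using Proposition \ref{almostisos1}(ii) (that $\psi_n\circ\phi_n$ tends to the identity on the fixed ball $\overline{B}(p,1)$) together with $\eta=\dist(A,B)>0$, one checks that for any fixed $a\in A$ one has $\phi_n(a)\in A_n$ for all large $n$, and symmetrically $B_n\neq\emptyset$; thus both $A_n$ and $B_n$ are nonempty for large $n$. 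Writing the path as $P_n=(x_1,\dots,x_{k_n})$, its point set then meets both $A_n$ and $B_n$, so there is an index $i$ with, say, $x_i\in A_n$ and $x_{i+1}\in B_n$. This forces $d(\psi_n(x_i),\psi_n(x_{i+1}))\ge\eta$, whereas Proposition \ref{almostisos1}(iii) bounds the same quantity by $d_n(x_i,x_{i+1})+\epsilon_n\le\delta_n+\epsilon_n$ with $\epsilon_n\to 0$; letting $n\to\infty$ yields $\eta\le 0$, a contradiction.

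The two points I would take care to verify are that both $A_n$ and $B_n$ are genuinely nonempty for large $n$ (this is exactly where the strict positivity of $\eta$ combines with the almost-surjectivity in Proposition \ref{almostisos1}(ii)) and that all points in play lie in a fixed ball, say $B(p_n,2)$, so that the uniform estimates of Proposition \ref{almostisos1} with $R=2$ apply; granting these, the contradiction above shows $P$ is connected and completes the proof.
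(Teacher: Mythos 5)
Your proof is correct and follows essentially the same route as the paper: both pass to a subsequence via Lemma \ref{subsetconvergence}, fix the almost-isometries of Proposition \ref{almostisos1} for the subset convergence, and derive a contradiction from a positive separation $\dist(A,B)>0$ using the fact that a $\delta_n$-path cannot cross a definite gap. The only cosmetic difference is the direction of transport---the paper pushes the path forward into $P$ via $\psi_n$ to produce an impossible $\tfrac{3\epsilon}{10}$-path joining $A$ to $B$, while you pull the partition back to $P_n$ and locate a single transition edge there---and your explicit verification of compactness (via $P\subseteq\overline{B}(p,1)$ and properness of $X$) spells out a detail the paper leaves implicit.
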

\begin{proof}
The existence of a subsequence under which $P_n$ converges to a compact set $P\subseteq X$ is assured by Lemma \ref{subsetconvergence}. Assume that $P$ is not connected. It follows that $P$ can be written as $A\cup B$, where $\epsilon:=\text{dist}(A,B)>0$.

Fix mappings $\phi_n:P\rightarrow P_n$ and $\psi_n: P_n \rightarrow P$ as in Proposition \ref{almostisos1}. By choosing $n$ large, we may ensure that 
$$\delta_n<\epsilon/10,$$
$$ |d_n(\phi_n(x), \phi_n(y)) - d(x,y)| < \epsilon/10,$$
and
$$ |d_n(\psi_n(\phi_n(x)),x)| < \epsilon/10$$
for all $x,y\in P$.

Fix $a\in A\subseteq P$ and $b\in B\subseteq P$. Then $\phi_n(a)$ and $\phi_n(b)$ are in $P_n$, so there is a $\delta_n$-path 
$$(\phi_n(a)=x_n^1, \dots, \phi_n(b)=x_n^m)\subseteq P_n$$ 
between them. The path
$$ (a, \psi_n(x_n^1), \psi_n(x_n^2), \dots, \psi_n(x_n^m), b) \subseteq P$$
is then a $\frac{3\epsilon}{10}$-path from $a$ to $b$ in $P$. But this is impossible, since $\text{dist}(A,B)=\epsilon$.
\end{proof}

\begin{theorem}\label{prop:LLGH}
Let $f:X\rightarrow Y$ be a Lipschitz mapping between complete, doubling metric spaces. Then the following are equivalent:
\begin{enumerate}[(i)]
\item\label{LLGHi} $f$ is Lipschitz light.
\item\label{LLGHii} Each weak tangent of $f$ is Lipschitz light.
\item\label{LLGHiii} Each weak tangent of $f$ is light.
\end{enumerate}
\end{theorem}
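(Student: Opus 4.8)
The plan is to prove the cyclic chain of implications $\eqref{LLGHi}\Rightarrow\eqref{LLGHii}\Rightarrow\eqref{LLGHiii}\Rightarrow\eqref{LLGHi}$. The first two implications are essentially free: $\eqref{LLGHi}\Rightarrow\eqref{LLGHii}$ is exactly the content of Corollary \ref{cor:LLtan}, which guarantees that every weak tangent of a Lipschitz light map is again Lipschitz light with the same constant. For $\eqref{LLGHii}\Rightarrow\eqref{LLGHiii}$ I would simply invoke the observation, recorded after Definition \ref{def:LL2}, that every Lipschitz light map is light, and apply it to each weak tangent. The whole difficulty is therefore concentrated in $\eqref{LLGHiii}\Rightarrow\eqref{LLGHi}$, which I would establish in contrapositive form: assuming $f$ is \emph{not} Lipschitz light, I would manufacture a weak tangent of $f$ that fails to be light.

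So suppose $f$ is $L$-Lipschitz but not Lipschitz light. Since the Lipschitz clause already holds, the failure must occur in the second clause of Definition \ref{def:LL2}: for every $n$ there exist $r_n>0$ and $W_n\subseteq Y$ with $\diam(W_n)\leq r_n$, together with an $r_n$-component of $f^{-1}(W_n)$ of diameter exceeding $n r_n$. From this I would extract a finite $r_n$-path $P_n$ inside that component with two endpoints at distance greater than $n r_n$, and designate one endpoint $x_n$. The crux is to rescale at the right speed: set $\lambda_n = 1/\diam(P_n)$ and base the tangent at $x_n$. In the rescaled domain metric $\lambda_n d$, the path $P_n$ then has diameter exactly $1$, contains the base point $x_n$, and has mesh $\lambda_n r_n \leq 1/n \to 0$. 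Simultaneously, in the rescaled target metric $\lambda_n \rho$ the image $f(P_n)\subseteq W_n$ has diameter at most $\lambda_n r_n\to 0$.

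Now I would invoke the compactness machinery. Since the maps $f$ are uniformly Lipschitz, hence equicontinuous and uniformly bounded on bounded sets, Proposition \ref{sublimit} lets me pass to a subsequence along which the mapping packages $((X,\lambda_n d, x_n),(Y,\lambda_n\rho, f(x_n)), f)$ converge to a weak tangent $((Z,p),(\hat{Y},\hat{q}),\hat{f})$ of $f$. Each $P_n$ lies in $\overline{B}(x_n,1)$ and has mesh tending to $0$, so Lemma \ref{lem:pathconvergence}, after a further subsequence, yields convergence of $P_n$ to a compact, connected set $P\subseteq Z$ containing $p$. The diameter of $P$ is the limit of the diameters of the $P_n$, namely $1$, by the distance-convergence property in Proposition \ref{almostisos1}; in particular $P$ is nondegenerate. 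Finally, because $\diam(f(P_n))\to 0$ in the rescaled targets and $\hat{f}$ is recovered through the almost-isometries as in \eqref{eq:almostisoseq}, the image $\hat{f}(P)$ collapses to a single point. Thus $\hat{f}$ has a fiber containing the nondegenerate connected set $P$, so $\hat{f}$ is not light, contradicting $\eqref{LLGHiii}$ and completing the cycle.

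The main obstacle is this last step: one must check that the limiting set $P$ is simultaneously nondegenerate, connected, and crushed to a point by $\hat{f}$, with all three properties surviving the passage to the limit. Nondegeneracy and collapse come from the two competing diameter estimates (diameter $\approx 1$ in the domain against diameter $\to 0$ in the target) combined with the approximate distance preservation of the almost-isometries, while connectedness is precisely what Lemma \ref{lem:pathconvergence} supplies, since an $r_n$-path of vanishing mesh cannot split into separated pieces in the limit. Tracking the two nested subsequence extractions and confirming that $x_n\in P_n\subseteq\overline{B}(x_n,1)$, so that Lemma \ref{lem:pathconvergence} genuinely applies, are the routine bookkeeping details.
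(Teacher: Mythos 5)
Your proposal is correct and follows essentially the same route as the paper's proof: the first two implications via Corollary \ref{cor:LLtan} and the triviality that Lipschitz light implies light, and then \eqref{LLGHiii}$\Rightarrow$\eqref{LLGHi} by rescaling at $\lambda_n = 1/\diam(P_n)$, invoking Proposition \ref{sublimit} for a convergent subsequence of mapping packages, and using Lemma \ref{lem:pathconvergence} to obtain a non-trivial connected limit set of diameter $1$ that the tangent map collapses to a point. The only cosmetic difference is that you phrase the last step as a contrapositive and explicitly extract the path from a large $r_n$-component, a step the paper leaves implicit.
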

\begin{proof}
We have already shown that \eqref{LLGHi} implies \eqref{LLGHii} in Corollary \ref{cor:LLtan}. Since Lipschitz light maps are automatically light, \eqref{LLGHii} immediately implies \eqref{LLGHiii}. It remains to show that \eqref{LLGHiii} implies \eqref{LLGHi}.

Suppose that every weak tangent of $f$ is light, but that $f$ is not Lipschitz light. That means that that, for each $n\in\mathbb{N}$, we have
\begin{itemize}
\item a positive number $r_n$,
\item a subset $W_n \subseteq Y$ with $\diam(W_n) \leq r_n$, and
\item an $r_n$-path $P_n\subseteq f^{-1}(W_n)$ with $\diam(P_n)\geq nr_n$.
\end{itemize}
Let $x_n$ be the initial point of $P_n$. We consider the following sequence of mapping packages:
$$\left\{ \left( (X,\frac{1}{\diam P_n} d,x_n), (Y, \frac{1}{\diam P_n}\rho, f(x_n)), f \right) \right\}.$$
By Proposition \ref{sublimit}, a subsequence of this sequence converges to a mapping package
$$ \left\{ \left( (\hat{X}, \hat{d}, \hat{x}),(\hat{Y},\hat{\rho},\hat{f}(\hat{x})), f \right)\right\}$$
in $\WTan(f)$.

In the space $ (X,\frac{1}{\diam P_n} d,x_n)$, $P_n$ is a $\frac{1}{n}$-path of diameter exactly $1$. By passing to a further subsequence, we may assume that $P_n$ converges to a connected subset $\hat{P}\subset \hat{X}$, as in Lemma \ref{lem:pathconvergence}. Furthermore, $\diam(\hat{P})=1$, since $\diam(\hat{P_n})=1$ for each $n$.

On the other hand, $f(P_n)\subseteq W_n$ has diameter at most $\frac{1}{n}$ in $(Y, \frac{1}{\diam P_n}\rho)$, and therefore $\hat{f}(\hat{P})$ is a single point in $\hat{Y}$. 

Thus, $\hat{f}$ collapses the non-trivial connected set $\hat{P}$ to a point. It follows that $\hat{f}$ is not light, contradicting our assumption \eqref{LLGHiii}.
\end{proof}

\section{Lipschitz dimensions of various spaces}\label{sec:examples}

In this section, we use a variety of techniques to compute or bound the Lipschitz dimension of a number of spaces. The main concrete results of interest are Corollary \ref{cor:treebuilding} concerning trees and buildings, Corollary \ref{cor:Rnsnowflake} on snowflakes of Euclidean spaces, Theorem \ref{thm:Carnot} concerning Carnot groups, and Theorem \ref{theorem:ss} covering certain fractals in Euclidean space.

\subsection{Trees and Euclidean buildings}\label{subsec:treesbuildings}
In this section, we study two classes of non-positively curved spaces: metric trees and Euclidean buildings. A \textit{metric tree} is a geodesic metric space such that all geodesic triangles are degenerate. In other words, a metric tree is a space $T$ such that every two points $x,y\in T$ can be joined by a curve $\gamma_{xy}$ of length $d(x,y)$, and such that if $x,y,z\in T$ then $\gamma_{xz}\subseteq \gamma_{xy}\cup\gamma_{yz}.$ In particular, as in \cite{LS}, no compactness or local finiteness is assumed, so a metric tree may have arbitrarily large Hausdorff dimension, for example.

The definition of Euclidean building would take us rather far afield here, so we refer those who are interested to \cite[Section 6]{LPS} or \cite{KL} for details. The definition of Euclidean building will not directly enter our arguments in this section; rather we use only two results from \cite{LS} about trees and buildings.

In \cite[Lemma 3.1]{LS}, Lang and Schlichenmaier study mappings that satisfy certain technical conditions (those in Lemma \ref{lem:LSlemma} below), and show that such mappings cannot decrease Nagata dimension. They then construct such mappings from trees and buildings into $\RR^n$, in order to bound the Lipschitz dimensions of these spaces.

We show that the mappings studied by Lang and Schlichenmaier are in fact Lipschitz light.

\begin{lemma}\label{lem:LSlemma}
Suppose $f\colon X\rightarrow Y$ is $1$-Lipschitz and $h\colon X\times[0,\infty)\rightarrow X$ are mappings with the following three properties, for some $\lambda,\mu>0$:
\begin{enumerate}[(i)]
\item Whenever $C\subset Y$ is non-empty and bounded, there exists $y\in Y$ such that
$$ f^{-1}(C) \subset \overline{N}_{\lambda \diam (C)}(f^{-1}(y)). $$
\item For all $x\in X$ and $t\geq 0$, $d(h(x,t),x)\leq t$.
\item If $f(x)=f(x')$ and $t\geq \mu d(x,x')$, then $h(x,t) = h(x',t)$.
\end{enumerate}
Then $f$ is Lipschitz light.
\end{lemma}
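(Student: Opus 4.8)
The plan is to verify Definition~\ref{def:LL2} directly. Since $f$ is already $1$-Lipschitz, it suffices to produce a single constant $C$ so that every $r$-path lying in a preimage $f^{-1}(W)$ with $\diam(W)\leq r$ has diameter at most $Cr$; taking the supremum over all such paths in an $r$-component then yields the required diameter bound. So I fix $r>0$, a set $W\subseteq Y$ with $\diam(W)\leq r$, and an $r$-path $P=(x_1,\dots,x_k)$ inside $f^{-1}(W)$. My first step is to apply hypothesis (i) with $C=W$ to obtain a \emph{single} point $y\in Y$ with $f^{-1}(W)\subseteq\overline{N}_{\lambda r}(f^{-1}(y))$, so that every vertex $x_i$ of the path lies within distance $\lambda r$ of the one fiber $f^{-1}(y)$. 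Choosing for each $i$ a point $x_i'\in f^{-1}(y)$ with $d(x_i,x_i')\leq\lambda r$, the path condition $d(x_i,x_{i+1})\leq r$ upgrades, via the triangle inequality, to $d(x_i',x_{i+1}')\leq(2\lambda+1)r$ for consecutive indices.

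The heart of the argument is to collapse the auxiliary sequence $\{x_i'\}$ to a single point using the contraction $h$. I would set $t=\mu(2\lambda+1)r$. Since all the $x_i'$ lie in the common fiber $f^{-1}(y)$, we have $f(x_i')=f(x_{i+1}')$, and since $t\geq\mu\, d(x_i',x_{i+1}')$ by the previous step, hypothesis (iii) forces $h(x_i',t)=h(x_{i+1}',t)$ for every consecutive pair. Chaining these equalities along the whole path gives $h(x_1',t)=\cdots=h(x_k',t)=:p$, a single point of $X$. This propagation is the conceptual crux of the proof and the step I expect to be the main obstacle to phrase cleanly: the role of (iii) is precisely to make the fiberwise contraction at the fixed scale $t$ \emph{consistent} across all points of the fiber that are pairwise close at that scale, while the $r$-path structure is exactly what guarantees the needed closeness of consecutive terms after projecting onto the fiber.

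Finally I would close the estimate with hypothesis (ii) and the triangle inequality. Property (ii) gives $d(x_i',p)=d(x_i',h(x_i',t))\leq t=\mu(2\lambda+1)r$ for every $i$, so any two terms satisfy $d(x_i',x_j')\leq 2\mu(2\lambda+1)r$; re-inserting the comparison $d(x_i,x_i')\leq\lambda r$ yields
$$ d(x_i,x_j)\leq 2\lambda r + 2\mu(2\lambda+1)r = \bigl(2\lambda+2\mu(2\lambda+1)\bigr)r.$$
Hence $\diam(P)\leq Cr$ with $C=2\lambda+2\mu(2\lambda+1)$, a constant independent of $r$, $W$, and the path, which is exactly what Definition~\ref{def:LL2} requires. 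The only technical nuisance is that the infimum defining $\dist(x_i,f^{-1}(y))$ need not be attained, so I would in fact choose each $x_i'$ within $\lambda r+\epsilon$ and let $\epsilon\to 0$ at the very end; this enlarges $t$ by an arbitrarily small amount and affects none of the final constants.
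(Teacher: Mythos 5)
Your proof is correct and follows essentially the same route as the paper's: apply (i) to push the $r$-path onto a single fiber $f^{-1}(y)$ within distance $\lambda r$, use (iii) at the scale $t=\mu(2\lambda+1)r$ to collapse the projected path to one point via $h$, then bound everything with (ii) and the triangle inequality, arriving at the identical constant $2(\lambda+\mu+2\mu\lambda)$. Your closing remark about choosing $x_i'$ within $\lambda r+\epsilon$ (since the distance to the fiber need not be attained) is a small point of rigor the paper elides, but it does not change the argument.
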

\begin{proof}
Consider any $C \subset Y$ with $\diam C\leq r$. Consider any $r$-path $(x_1, \dots, x_n)$ in $f^{-1}(C)$. By (i), there is a corresponding $(1+2\lambda)r$-path $(z_1, \dots, z_n)\subset f^{-1}(y)$ for some $y\in Y$, with $d(x_i, z_i)\leq \lambda r$ for each $i$.

By (iii), we see that
$$ h(z_i, \mu(1+2\lambda)r) = h(z_{i+1}, \mu(1+2\lambda)r) $$
for each $i\in\{1,\dots, n-1\}$. So
$$ h(z_i, \mu(1+2\lambda)r) = h(z_j, \mu(1+2\lambda)r) $$
for each $i,j\in\{1,\dots, n\}$. Thus, there is a point $p\in X$ with 
$$ h(z_i, \mu(1+2\lambda)r) = p \text{ for each } i\in\{1,\dots,n\}.$$

It follows from (ii) that
$$ d(z_i, p) = d(z_i, h(z_i, \mu(1+2\lambda)r)) \leq \mu(1+2\lambda)r \text{ for each } i \in\{1,\dots,n\},$$
and so
$$ \diam(\{z_1, \dots, z_n\}) \leq 2\mu(1+2\lambda)r. $$
Therefore,
$$ \diam(\{x_1, \dots, x_n\}) \leq 2\mu(1+2\lambda)r + 2\lambda r = 2(\mu+2\mu\lambda+\lambda)r $$
and so $f$ is Lipschitz light.
\end{proof}

It is a trivial observation that if $f:X\rightarrow Y$ is Lipschitz light, then $\dim_L (X) \leq \dim_L (Y)$. (One simply observes that the composition of $f$ with any Lipschitz light mapping from $Y$ to some $\RR^n$ is also Lipschitz light.) As an immediate corollary, if $X$ and $Y$ are as in Lemma \ref{lem:LSlemma}, then $\dim_L(X) \leq \dim_L(Y)$. We now observe that Lang and Schlichenmaier in fact prove the following en route to Theorems 3.2 and 3.3 of \cite{LS}.

\begin{theorem}[Lang-Schlichenmaier \cite{LS}, Theorems 3.2 and 3.3]\label{thm:LSthm}
Let $T$ be a metric tree and let $X$ be a Euclidean building of rank $n$. Then
\begin{enumerate}[(i)]
\item There are maps $f_T:T\rightarrow \RR$ and $h_T:T\times \RR \rightarrow T$ satisfying the assumptions of Lemma \ref{lem:LSlemma}.
\item There are maps $f_X:X\rightarrow \RR^n$ and $h_X:X\times \RR \rightarrow X$ satisfying the assumptions of Lemma \ref{lem:LSlemma}.
\end{enumerate}
\end{theorem}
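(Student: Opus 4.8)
Since this statement is attributed to Lang--Schlichenmaier, the plan is to recall their explicit constructions and check the three hypotheses of Lemma~\ref{lem:LSlemma}. In both cases the map $h$ will be the \emph{geodesic contraction toward a fixed base point} $o$: a metric tree is uniquely geodesic and a Euclidean building is CAT(0) (hence also uniquely geodesic), so I can set $h(x,t)$ to be the point on the geodesic $[o,x]$ at distance $\min(t,d(o,x))$ from $x$ (with $h(x,t)=o$ once $t\ge d(o,x)$). Property (ii) is then immediate, since $d(h(x,t),x)=\min(t,d(o,x))\le t$. Thus all the work lies in choosing $f$ and verifying (i) and (iii).

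For the tree $T$, I take $f_T=d(\cdot,o)\colon T\to\RR$, which is $1$-Lipschitz. For (i), given a bounded $C\subset\RR$ set $y=\inf C$; for any $x$ with $d(x,o)\in C$ the radial projection of $x$ onto the sphere $f_T^{-1}(y)$ lies on $[o,x]$ at distance $d(x,o)-y\le \diam(C)$ from $x$, so $f_T^{-1}(C)\subseteq \overline{N}_{\diam(C)}(f_T^{-1}(y))$ and (i) holds with $\lambda=1$. For (iii), if $f_T(x)=f_T(x')$ then $d(o,x)=d(o,x')$, and if $b$ denotes the branch point (the median of $o,x,x'$) then the tree identity $d(x,x')=d(b,x)+d(b,x')$ together with $d(b,x)=d(b,x')$ gives $d(b,x)=\tfrac12 d(x,x')$. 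The contraction merges the two geodesics as soon as $t\ge d(b,x)$, so (iii) holds with $\mu=\tfrac12$.

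For a Euclidean building $X$ of rank $n$, I fix an apartment $A_0\cong\RR^n$ through $o$ and let $f_X$ be the retraction $\rho$ onto $A_0$ centered at $o$, followed by the identification $A_0\cong\RR^n$; this is $1$-Lipschitz, restricts to an isometry on every apartment containing $o$, preserves distance to $o$, and carries each geodesic $[o,x]$ isometrically onto $[o,\rho(x)]$. For (i), given bounded $C$ fix $y\in C$; for $x$ with $\rho(x)=c\in C$, choose an apartment $A$ containing both $o$ and $x$ (any two points of a building lie in a common apartment), so $\rho|_A$ is an isometry onto $A_0$ and the $A$-preimage $x''$ of $y$ satisfies $x''\in f_X^{-1}(y)$ and $d(x,x'')=|c-y|\le\diam(C)$; hence (i) holds with $\lambda=1$. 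For (iii), $f_X(x)=f_X(x')$ forces $d(o,x)=d(o,x')$, so the geodesics $[o,x],[o,x']$ agree on an initial segment up to a branch point $b$ with $d(b,x)=d(b,x')$, and, exactly as before, the contraction merges them once $t\ge d(b,x)$.

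The main obstacle is the quantitative part of (iii) for buildings: I need a bound $d(b,x)\le\mu\,d(x,x')$, i.e.\ a lower bound $d(x,x')\ge c_0\,d(b,x)$ on how fast the two branches separate. In a general CAT(0) space this fails (the comparison angle at $b$ may be arbitrarily small), so here the building structure is essential. The directions of $[b,x]$ and $[b,x']$ are two \emph{distinct} points of the link of $b$ --- a spherical building --- with the same image under the induced folding, and the minimal positive distance between two such folded-together directions is bounded below by a constant $\theta_0>0$ depending only on the Coxeter type. This yields $\angle_b(x,x')\ge\theta_0$ and hence $d(x,x')\ge 2\sin(\theta_0/2)\,d(b,x)$, giving $\mu=(2\sin(\theta_0/2))^{-1}$. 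Establishing this folding estimate is precisely the technical content underlying Lang--Schlichenmaier's Theorems~3.2 and 3.3, and it is the step I expect to require the most care.
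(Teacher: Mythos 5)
Your tree construction in part (i) is correct and is essentially the standard one: distance to a basepoint together with geodesic contraction toward that basepoint, with $\lambda=1$ and $\mu=\tfrac12$ (the only quibble is that you should take $y=\max(0,\inf C)$ rather than $y=\inf C$, so that the fiber $f_T^{-1}(y)$ is nonempty). The building construction in part (ii), however, is not just incomplete at the deferred ``folding estimate'' --- that estimate is false, and no estimate can rescue geodesic contraction toward a basepoint. Take the simplest rank-$2$ building, a product $X=T_1\times T_2$ of two thick metric trees with the $\ell_2$ metric, basepoint $o=(o_1,o_2)$. Let $x=(x_1,x_2)$ and $x'=(x_1,x_2')$, where $d(o_1,x_1)=L$ is large, $d(o_2,x_2)=d(o_2,x_2')=s$, and $x_2,x_2'$ lie in two branches of $T_2$ emanating from a point $b_2$ with $d(b_2,x_2)=d(b_2,x_2')=\epsilon$, so $d(x,x')=2\epsilon$. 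Choosing the two branches at $b_2$ appropriately, $x$ and $x'$ are identified by your $f_X$ (and likewise by the vector-valued distance or by any standard retraction). But the geodesics $[o,x]$ and $[o,x']$ separate at the point $b$ where their $T_2$-coordinate reaches $b_2$, and a direct computation gives $d(b,x)=(\epsilon/s)\sqrt{L^2+s^2}$; since $h(x,t)=h(x',t)$ exactly when $t\geq d(b,x)$, condition (iii) would force $\mu\geq \sqrt{L^2+s^2}/(2s)$, which is unbounded as $L/s\rightarrow\infty$. In link terms: the directions at $b$ of $[b,x]$ and $[b,x']$ are distinct, are folded together, and are both antipodal to the direction of $[b,o]$, yet their angular distance is about $2\arcsin\bigl(s/\sqrt{L^2+s^2}\bigr)$, which is arbitrarily small --- so the uniform lower bound $\theta_0$ you hope for does not exist. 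The same degeneration occurs in thick irreducible buildings whenever a geodesic from $o$ meets a wall along which the building branches at a shallow angle.

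The construction that actually works (and this is the geometric content of the Lang--Schlichenmaier argument, which the paper under review cites as a black box rather than reproving) is based at infinity rather than at a point: fix a chamber at infinity $\sigma$, let $\xi_0$ be its barycentric (regular) direction, let $h(x,t)$ move $x$ at unit speed along the unique ray asymptotic to $\xi_0$, and let $f_X$ be the retraction onto an apartment centered at $\sigma$. The point is that a regular direction makes an angle bounded below, by a constant depending only on the Weyl group, with \emph{every} wall; hence two folded-together points flow together within time $\mu\, d(x,x')$ for a uniform $\mu$ (in the product-of-trees example this flow contracts each tree coordinate separately and merges $x,x'$ by time $\tfrac{\sqrt2}{2}d(x,x')$). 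So the gap in your proposal is not a missing technical lemma but the choice of $h$ itself: contraction toward a basepoint approaches branching walls at arbitrarily shallow angles and cannot satisfy condition (iii) of Lemma \ref{lem:LSlemma} in any Euclidean building of rank at least $2$ that is thick.
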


As a consequence, we have:

\begin{corollary}\label{cor:treebuilding}
Let $X$ be a product of $n$ (non-trivial) metric trees or a Euclidean building of rank $n$. Then the Lipschitz dimension of $X$ is $n$.
\end{corollary}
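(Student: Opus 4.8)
The plan is to establish the two inequalities $\dim_L(X) \leq n$ and $\dim_L(X) \geq n$ separately, handling each of the two classes of spaces as needed.

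For the upper bound I would argue as follows. In the tree case, Theorem \ref{thm:LSthm}(i) supplies maps $f_T$ and $h_T$ satisfying the hypotheses of Lemma \ref{lem:LSlemma}, so $f_T\colon T \to \RR$ is Lipschitz light and hence $\dim_L(T) \leq 1$ for every metric tree $T$. Proposition \ref{lem:products}, applied inductively to the $n$ factors, then gives $\dim_L(T_1 \times \cdots \times T_n) \leq n$. In the building case, Theorem \ref{thm:LSthm}(ii) directly supplies maps $f_X \colon X \to \RR^n$ and $h_X$ satisfying Lemma \ref{lem:LSlemma}, so $f_X$ is Lipschitz light and $\dim_L(X) \leq n$.

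For the lower bound, the key observation is that Lipschitz dimension is monotone under passing to subspaces: if $Z \subseteq X$ and $f \colon X \to \RR^n$ is Lipschitz light with constant $C$, then $f|_Z$ is also Lipschitz light with constant $C$, since any $r$-path in $(f|_Z)^{-1}(W) = f^{-1}(W) \cap Z$ is in particular an $r$-path in $f^{-1}(W)$ and hence has diameter at most $Cr$; thus $\dim_L(Z) \leq \dim_L(X)$. It therefore suffices to exhibit inside $X$ a subspace bi-Lipschitz equivalent to a Euclidean cube $[0,1]^n$ (or to $\RR^n$) and to recall that $\dim_L([0,1]^n) \geq \dim_T([0,1]^n) = n$ by Observation \ref{obs:liptop}. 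For a product of $n$ non-trivial metric trees, each factor contains a non-degenerate geodesic segment, isometric to a real interval; the product of these segments, with the $\ell_\infty$ product metric, is isometric to a coordinate box and hence bi-Lipschitz to $[0,1]^n$. For a Euclidean building of rank $n$, any apartment is isometric to $n$-dimensional Euclidean space, which contains such a cube. In either case $X$ contains the required Euclidean subspace, so $\dim_L(X) \geq n$, completing the proof.

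The argument is essentially an assembly of results already in hand, so I do not anticipate a serious obstacle. The two points requiring a little care are the (routine) verification of subspace-monotonicity of $\dim_L$ and the identification of an isometrically embedded Euclidean piece; the latter is immediate from the definition of Euclidean building (apartments are Euclidean $n$-spaces) and from the geodesic structure of trees, but it is worth isolating explicitly, since $X$ itself need not be compact and one cannot apply Observation \ref{obs:liptop} to $X$ directly --- only to its compact Euclidean subspace (cf. the caveat in Remark \ref{rmk:liptop}).
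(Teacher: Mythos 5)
Your proof is correct and follows essentially the same route as the paper's: the upper bound via Lemma \ref{lem:LSlemma}, Theorem \ref{thm:LSthm}, and Proposition \ref{lem:products}, and the lower bound by exhibiting an embedded Euclidean cube (a product of geodesic segments, respectively an apartment) and invoking Observation \ref{obs:liptop}. The only difference is that you spell out the subspace-monotonicity of $\dim_L$ and the compactness caveat, both of which the paper leaves implicit; these verifications are correct.
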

\begin{proof}
By Lemma \ref{lem:LSlemma} and Theorem \ref{thm:LSthm}, a Euclidean building of rank $n$ has Lipschitz dimension at most $n$, and a metric tree has Lipschitz dimension at most $1$.

Since a Euclidean building of rank $n$ contains an isometric copy of $\RR^n$, its dimension must be $n$.

By Lemma \ref{lem:products} and the above, a product of $n$ metric trees has Lipschitz dimension at most $n$. If each tree is non-trivial, the product contains an embedded copy of a cube in $\RR^n$, and hence has Lipschitz dimension equal to $n$.
\end{proof}

\subsection{Snowflakes of Euclidean spaces}\label{subsec:snowflake}
Unlike Nagata dimension (see \cite[Theorem 1.2]{LS}), Lipschitz dimension is not a quasisymmetric or even snowflake invariant, as we will discuss in Section \ref{sec:mappingproperties}. However, we have the following result:

\begin{theorem}\label{thm:snowflake}
For every $\epsilon\in (0,1]$, the Lipschitz dimension of the $\epsilon$-snowflake $X:=(\RR,|\cdot|^\epsilon)$ is $1$.
\end{theorem}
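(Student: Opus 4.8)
The plan is to prove the two inequalities $\dim_L(X)\ge 1$ and $\dim_L(X)\le 1$ separately. The case $\epsilon=1$ is trivial, since the identity map $(\RR,|\cdot|)\to\RR$ is visibly Lipschitz light, so I assume $\epsilon\in(0,1)$. For the lower bound I would rule out a Lipschitz light map $f\colon X\to\RR^0$. Such a map is constant, so taking $W=\RR^0$ and any $r>0$ gives $f^{-1}(W)=\RR$. But $\RR$ is a single $r$-component of itself in the metric $|\cdot|^\epsilon$: any two reals are joined by a finite sequence with consecutive Euclidean gaps at most $r^{1/\epsilon}$, hence snowflake-distance at most $r$. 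This $r$-component has infinite diameter, so $f$ cannot be Lipschitz light, giving $\dim_L(X)\ge 1$.

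For the upper bound, the key reduction is a reformulation of the Lipschitz light property for a Lipschitz map $f\colon X\to\RR$. Writing $\mathrm{osc}_I f=\sup_I f-\inf_I f$, I claim it suffices to produce a single $f\colon\RR\to\RR$ and constants $0<c\le C_H$ with
$$ c\,|I|^\epsilon \le \mathrm{osc}_I f \le C_H\,|I|^\epsilon \quad\text{for every bounded interval } I\subseteq\RR. $$
The right inequality says exactly that $f$ is $\epsilon$-Hölder, i.e.\ Lipschitz as a map $(\RR,|\cdot|^\epsilon)\to\RR$. To see that the two bounds force $f$ to be Lipschitz light, fix $W$ with $\diam(W)\le r$ and a snowflake $r$-path $P=(x_1,\dots,x_k)$ in $f^{-1}(W)$, so $|x_i-x_{i+1}|\le r^{1/\epsilon}$. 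Viewing $P$ as a walk on $\RR$ with steps of Euclidean length at most $r^{1/\epsilon}$, every point $z$ of $J=[\min_i x_i,\max_i x_i]$ lies within Euclidean distance $r^{1/\epsilon}$ of some $x_i$ (the walk must step across $z$); hence the Hölder bound gives $f(z)\in\overline{N}_{C_H r}(W)$, so $\mathrm{osc}_J f\le(1+2C_H)r$. The left bound then yields $c\,|J|^\epsilon\le(1+2C_H)r$, and since the snowflake diameter of $P$ equals $|J|^\epsilon$, this bounds it by $Cr$ with $C=(1+2C_H)/c$. Thus $f$ is Lipschitz light and $\dim_L(X)\le 1$.

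It remains to construct $f$ with the two-sided oscillation estimate, for which I would take the Weierstrass-type sum
$$ f(x)=\sum_{n\in\mathbb{Z}} b^{-n\epsilon}\,\phi(b^n x), $$
where $\phi(t)=\dist(t,\mathbb{Z})$ is the triangle wave and $b\ge 2$ is a large integer to be chosen. Because $\epsilon\in(0,1)$ both tails converge (the terms decay like $b^{-n\epsilon}$ as $n\to+\infty$ and like $b^{-|n|(1-\epsilon)}$ as $n\to-\infty$), and the sum satisfies the exact self-similarity $f(bx)=b^\epsilon f(x)$. The upper (Hölder) bound is the standard Weierstrass estimate: splitting the sum at the scale $b^{-N}\asymp|x-y|$, the coarse terms are controlled using that $\phi(b^n\cdot)$ is $b^n$-Lipschitz and the fine terms using $\phi\le\tfrac12$, both geometric series evaluating to $O(|x-y|^\epsilon)$.

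The main obstacle is the lower oscillation bound, which is a non-cancellation estimate. Here, given $I$, I would match the scale $N$ so that $b^{-N}\asymp|I|$; the single term $b^{-N\epsilon}\phi(b^N x)$ already oscillates by $\gtrsim|I|^\epsilon$ on $I$, while the finer terms $(n>N)$ contribute total oscillation at most $\tfrac12\sum_{n>N}b^{-n\epsilon}=O(b^{-\epsilon})\,|I|^\epsilon$ and the coarser terms $(n<N)$, being slowly varying, contribute at most $|I|\sum_{n<N}b^{n(1-\epsilon)}=O(b^{-(1-\epsilon)})\,|I|^\epsilon$. Both correction sums carry coefficients tending to $0$ as $b\to\infty$, so choosing $b=b(\epsilon)$ large enough leaves $\mathrm{osc}_I f\ge c\,|I|^\epsilon$ after subtraction; the self-similarity $f(bx)=b^\epsilon f(x)$ can alternatively be used to reduce this estimate to the single band of scales $|I|\in[1,b)$. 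Combining the two inequalities gives $\dim_L(X)=1$.
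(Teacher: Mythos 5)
Your proposal is correct in outline but takes a genuinely different route from the paper. The paper's proof is non-constructive: it embeds $X$ bi-Lipschitzly into $\RR^n$ with $n$ chosen \emph{minimal} (via Assouad's theorem), projects to the first coordinate, and uses the weak-tangent characterization of Lipschitz lightness (Theorem \ref{prop:LLGH}) to show that a failure of lightness would produce a bi-Lipschitz copy of a snowflaked arc inside an $(n-1)$-plane, hence, after a blow-up, a bi-Lipschitz embedding of $X$ into $\RR^{n-1}$, contradicting minimality of $n$. You instead exhibit an explicit Lipschitz light map: a bilateral Weierstrass-type sum satisfying two-sided oscillation bounds. Your key reduction is correct and cleanly argued: the upper oscillation bound is exactly Lipschitzness from the snowflake, and for the lightness estimate your walk-crossing argument (every point of $J=[\min_i x_i,\max_i x_i]$ lies within Euclidean distance $r^{1/\epsilon}$ of the path) does give $\mathrm{osc}_J f\le (1+2C_H)r$, so the lower oscillation bound caps the snowflake diameter of any $r$-path by $Cr$, and this transfers to $r$-components. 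Your direct lower bound $\dim_L(X)\ge 1$ (a map to $\RR^0$ is constant and $\RR$ is a single $r$-component of infinite snowflake diameter) is also fine and replaces the paper's appeal to Observation \ref{obs:liptop}. What your route buys is an elementary, self-contained proof with explicit constants and no Gromov--Hausdorff machinery; what the paper's route buys is a template that generalizes (the same tangent argument gives the paper's subsequent theorem on $\alpha$-snowflake subsets of $\RR^n$, where no explicit map is available).

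There is one step in your lower oscillation bound that fails as literally stated, though it is repairable in a line. You ``match $N$ so that $b^{-N}\asymp|I|$'' and then combine three estimates, but the fine-scale bound $\tfrac12\sum_{n>N}b^{-n\epsilon}=O(b^{-\epsilon})|I|^\epsilon$ requires $b^{-N}\lesssim |I|$, while the coarse-scale bound $|I|\sum_{n<N}b^{n(1-\epsilon)}=O(b^{-(1-\epsilon)})|I|^\epsilon$ requires $|I|\lesssim b^{-N}$, both with constants independent of $b$; since consecutive scales differ by the large factor $b$, no single choice of $N$ achieves both for a general interval. Concretely, taking $b^{-N}\le |I|<b^{-N+1}$ the coarse bound degrades to $\frac{b^{\epsilon}}{1-b^{-(1-\epsilon)}}\,b^{-N\epsilon}$, which \emph{exceeds} the main term's oscillation $\tfrac12 b^{-N\epsilon}$ by a factor growing with $b$; taking instead $b^{-N-1}<|I|\le b^{-N}$, a worst-placed interval centered at a peak of $\phi(b^N\cdot)$ has main-term oscillation only about $\tfrac{1}{2b}|I|^\epsilon$, which is swamped by the $O(b^{-\epsilon})+O(b^{-(1-\epsilon)})$ errors once $b$ is large. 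The fix: with $b^{-N}\le|I|<b^{-N+1}$, the interval $I$ contains a full period $I'=[kb^{-N},(k+1)b^{-N}]$ of $\phi(b^N\cdot)$; on $I'$ the main term oscillates by exactly $\tfrac12 b^{-N\epsilon}$, the fine part by at most $\frac{b^{-\epsilon}}{2(1-b^{-\epsilon})}b^{-N\epsilon}$, and the coarse part (Lipschitz constant at most $\sum_{n<N}b^{n(1-\epsilon)}$, times length $b^{-N}$) by at most $\frac{b^{-(1-\epsilon)}}{1-b^{-(1-\epsilon)}}b^{-N\epsilon}$. Monotonicity of oscillation then gives $\mathrm{osc}_I f\ge \mathrm{osc}_{I'}f\ge\bigl(\tfrac12-o_b(1)\bigr)b^{-N\epsilon}\ge c(b,\epsilon)\,|I|^\epsilon$, using $b^{-N}>|I|/b$. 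Choosing $b=b(\epsilon)$ large completes your argument; your closing remark about exploiting the self-similarity $f(bx)=b^\epsilon f(x)$ to normalize scales is the same idea in different clothing, but the localization to a full period (or some equivalent device blocking cancellation from the scale-$(N-1)$ term) is the ingredient that must be made explicit.
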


\begin{proof}
The case $\epsilon=1$ is immediate, so we assume $\epsilon < 1$.

By Assouad's theorem \cite[Theorem 12.2]{He}, there is a bi-Lipschitz embedding of $X$ into some Euclidean space. Let $n$ be the minimial dimension of a Euclidean space $\RR^n$ into which $X$ bi-Lipschitz embeds. Note that $n\geq 2$ since $\dim_H(X) > 1$. 

Let $Y\subseteq \RR^n$ be the image of $X$ under such a bi-Lipschitz embedding. Since Lipschitz dimension is clearly a bi-Lipschitz invariant, it suffices to show that $Y$ has Lipschitz dimension $1$.

Let $\pi:\RR^n\rightarrow\RR$ be the projection onto the first coordinate. We claim that $\pi|_Y$ is Lipschitz light.

Suppose not. Then, by Theorem \ref{prop:LLGH}, there is a weak tangent
$$ ((Z,z), (\RR,0),\hat{\pi})\in\WTan(\pi|_Y)$$
such that $\hat{\pi}$ is not light.

By Lemma \ref{lem:canonicaltangent}, the weak tangent package above may be viewed as a limit of rescalings inside $\RR^n$. In other words, there is an isometry $\iota$ from $Z$ onto a set $\hat{Y}\subseteq \RR^n$ with $\iota(z)=0$. Moreover, since rescalings and translations like those in Lemma \ref{lem:canonicaltangent} do not affect the linear map $\pi$, we have $\hat{\pi}=\pi\circ \iota$.

Thus, we have a weak tangent
$$ ( (\hat{Y}\subseteq \RR^n,0), (\RR,0), \pi) \in \WTan(\pi|_Y)$$
such that the linear projection $\pi$ is constant on a connected subset $E$ of $\hat{Y}$. 

Since $Y$ is bi-Lipschitz equivalent to $X=(\RR,|\cdot|^\epsilon)$, the space $\hat{Y}$ is also bi-Lipschitz equivalent to $(\RR, |\cdot|^\epsilon)$ by Lemma \ref{lem:GHmaps}. Therefore, any compact, connected subset $F$ of $E\subseteq \hat{Y}$ is bi-Lipschitz equivalent to $([-1,1],|\cdot|^\epsilon)$. 

$F$ is contained in $\pi^{-1}(p)$ (for some $p\in \RR$), which is isometric to $\RR^{n-1}$. Therefore, there is a bi-Lipschitz embedding
$$ h\colon ([-1,1],|\cdot|^\epsilon) \rightarrow \RR^{n-1}.$$
The mappings
$$ t\mapsto \lambda^\epsilon h(t/\lambda) \colon ([-\lambda,\lambda],|\cdot|^\epsilon) \rightarrow \RR^{n-1}$$
are then uniformly bi-Lipschitz, and so sub-converge to a bi-Lipschitz embedding of $X=(\RR, |\cdot|^\epsilon)$ into $\RR^{n-1}$ as $\lambda\rightarrow\infty$.

This contradicts our choice of $n$ as the minimal integer such that $X$ admits a bi-Lipschitz embedding into $\RR^n$. Thus, $\pi|_Y$ must in fact have been Lipschitz light, which makes $\dim_L(X) = \dim_L(Y) \leq 1$. Of course, $\dim_L(X) \geq 1$ by Observation \ref{obs:liptop}.
\end{proof}

\begin{corollary}\label{cor:Rnsnowflake}
The Cartesian product of $n$ snowflakes of $\RR$ has Lipschitz dimension $n$. In particular, each snowflake of $\RR^n$ has Lipschitz dimension $n$.
\end{corollary}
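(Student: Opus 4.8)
The plan is to prove the upper and lower bounds separately and then reduce the ``in particular'' statement to the product case by a norm-comparison argument. For the upper bound, each snowflake of $\RR$ is by definition bi-Lipschitz equivalent to $(\RR,|\cdot|^\epsilon)$ for some $\epsilon\in(0,1]$, so it has Lipschitz dimension $1$ by Theorem \ref{thm:snowflake} together with the bi-Lipschitz invariance of $\dim_L$. Applying Proposition \ref{lem:products} inductively to a product of $n$ such factors then gives $\dim_L \leq 1+\cdots+1 = n$. Note that this step works even when the $n$ snowflakes carry distinct exponents.

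For the lower bound, I would observe that the product $P$ of $n$ snowflakes of $\RR$ is homeomorphic to $\RR^n$, since each snowflaking is a self-homeomorphism of the line, so $\dim_T(P)=n$. Each factor $(\RR,|\cdot|^\epsilon)$ is proper (a closed snowflake ball $\{x:|x|^\epsilon\leq r\}$ is a closed Euclidean ball, hence compact), and a finite $\ell_\infty$-product of proper spaces is proper, hence $\sigma$-compact. Thus Observation \ref{obs:liptop} applies via Remark \ref{rmk:liptop} to give $\dim_L(P)\geq \dim_T(P)=n$. Alternatively, one can restrict to the compact subset corresponding to $[0,1]^n$, which is homeomorphic to a cube and so has topological dimension $n$, and then invoke the monotonicity of $\dim_L$ under passage to subsets (the restriction of a Lipschitz light map is Lipschitz light, since an $r$-path in the preimage of $W$ inside a subset is still such an $r$-path in the ambient space).

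Finally, for the ``in particular'' claim, I would identify a snowflake of $\RR^n$, namely $(\RR^n,\|\cdot\|_2^\epsilon)$ up to bi-Lipschitz equivalence, with the product of $n$ copies of $(\RR,|\cdot|^\epsilon)$. Under the $\ell_\infty$ product convention of Section \ref{sec:notation}, the product metric is $\max_i|x_i-y_i|^\epsilon = \|x-y\|_\infty^\epsilon$, and the equivalence of $\|\cdot\|_\infty$ and $\|\cdot\|_2$ on $\RR^n$ yields $\|x-y\|_\infty^\epsilon \leq \|x-y\|_2^\epsilon \leq n^{\epsilon/2}\|x-y\|_\infty^\epsilon$, so the two metrics are bi-Lipschitz equivalent and the claim follows from the product case. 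The only genuinely delicate point is making sure the lower bound survives the non-compactness of $P$; this is exactly what the properness observation (or the reduction to the compact subcube) handles, and everything else is a routine assembly of the preceding results.
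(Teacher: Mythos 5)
Your proof is correct and follows essentially the same route as the paper: upper bound from Theorem \ref{thm:snowflake} plus Proposition \ref{lem:products}, lower bound from topological dimension via Observation \ref{obs:liptop}, and the ``in particular'' statement from the bi-Lipschitz equivalence of $(\RR^n,|\cdot|^\epsilon)$ with the $n$-fold product of $(\RR,|\cdot|^\epsilon)$. Your extra care about non-compactness (properness/$\sigma$-compactness, or the reduction to a compact subcube) is exactly what Remark \ref{rmk:liptop} is there to justify, so the argument is complete.
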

\begin{proof}
By Theorem \ref{thm:snowflake} and Lemma \ref{lem:products}, The Cartesian product of $n$ snowflakes of $\RR$ has Lipschitz dimension at most $n$. It has Lipschitz dimension at least its topological dimension, by Observation \ref{obs:liptop}, which is also $n$.

For the second statement, we simply observe that $(\RR^n,|\cdot|^\epsilon)$ is bi-Lipschitz equivalent to the Cartesian product of $n$ copies of $(\RR,|\cdot|^\epsilon)$.
\end{proof}

We can also prove a result about the Lipschitz dimensions of more general snowflakes in Euclidean space.
\begin{theorem}
Let $E\subseteq \RR^n$ be a closed set that is an $\alpha$-snowflake for some $\alpha\in (0,1)$. Let $k$ be an integer with
$$ k > n-\frac{1}{\alpha}.$$
Then
$$ \dim_L (E) \leq k.$$
\end{theorem}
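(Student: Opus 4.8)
The strategy is to mimic the proof of Theorem \ref{thm:snowflake}, using the characterization of Lipschitz light maps via weak tangents (Theorem \ref{prop:LLGH}) together with the dimension constraint coming from the snowflake structure. The idea is that $E$, being an $\alpha$-snowflake inside $\RR^n$, has Hausdorff dimension $\leq n$ but its ``intrinsic snowflake pieces'' occupy $1/\alpha$ units of dimension per intrinsic dimension-$1$ direction. A generic linear projection $\pi\colon\RR^n\rightarrow\RR^k$ onto a $k$-dimensional coordinate subspace should be Lipschitz light once $k$ is large enough, because the complementary fibers of $\pi$ are copies of $\RR^{n-k}$, which are too low-dimensional to contain a bi-Lipschitz copy of a one-dimensional intrinsic snowflake piece when $n-k<1/\alpha$.

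\textbf{Key steps.} First I would reduce to showing that some linear projection $\pi\colon\RR^n\rightarrow\RR^k$ restricted to $E$ is Lipschitz light, since then $\dim_L(E)\leq k$ by definition. Second, I would argue by contradiction using Theorem \ref{prop:LLGH}: if $\pi|_E$ is not Lipschitz light, there is a weak tangent $((Z,z),(\RR^k,0),\hat\pi)\in\WTan(\pi|_E)$ that is not light. Third, by Lemma \ref{lem:canonicaltangent}, this weak tangent is realized inside $\RR^n$ as a blow-up $\hat E\subseteq\RR^n$, with $\hat\pi$ equal to the linear projection $\pi$ composed with an isometry; since rescalings and translations commute with the linear map $\pi$, the projection survives the limit unchanged. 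Fourth, non-lightness of $\hat\pi=\pi$ means $\pi$ is constant on some non-trivial connected subset $E'\subseteq\hat E$, hence $E'$ lies in a single fiber $\pi^{-1}(p)$, which is an isometric copy of $\RR^{n-k}$. Fifth, I would use Lemma \ref{lem:GHmaps} to see that $\hat E$ remains an $\alpha$-snowflake (a blow-up of an $\alpha$-snowflake of a closed set is again an $\alpha$-snowflake), so any compact connected subset of $E'$ is bi-Lipschitz equivalent to an $\alpha$-snowflake of a non-degenerate connected metric space, and in particular contains a bi-Lipschitz copy of $([0,1],|\cdot|^\alpha)$. The final contradiction is dimensional: $([0,1],|\cdot|^\alpha)$ has Hausdorff dimension $1/\alpha$, yet it would bi-Lipschitz embed into $\RR^{n-k}$, forcing $1/\alpha\leq n-k$, i.e. $k\leq n-1/\alpha$, contradicting the hypothesis $k>n-1/\alpha$.

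\textbf{Main obstacle.} The step requiring the most care is the fifth one: verifying that the blow-up $\hat E$ is genuinely an $\alpha$-snowflake and, more importantly, that a non-trivial connected subset of an $\alpha$-snowflake necessarily contains a bi-Lipschitz copy of a standard snowflaked interval $([0,1],|\cdot|^\alpha)$. For the snowflake $(\RR,|\cdot|^\epsilon)$ of Theorem \ref{thm:snowflake} this was automatic because every compact connected subset \emph{is} such a snowflaked interval; for a general snowflake of an arbitrary closed set $E\subseteq\RR^n$ one must ensure that a non-degenerate connected subset of the de-snowflaked space $(\hat E,d^{1/\alpha})$ has Hausdorff dimension at least $1$, so that $\hat E$ itself has a subset of Hausdorff dimension at least $1/\alpha$. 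This is where the topological-dimension lower bound enters: a non-trivial connected metric space has topological dimension at least $1$, hence (via \eqref{eq:dimensionrelations2}) Hausdorff dimension at least $1$, and snowflaking by $\alpha$ multiplies Hausdorff dimension by $1/\alpha$. One must then invoke the fact that Hausdorff dimension cannot increase under bi-Lipschitz embedding into $\RR^{n-k}$, whose Hausdorff dimension is $n-k$, to close the argument.
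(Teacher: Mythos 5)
Your proposal is correct and takes essentially the same route as the paper: project linearly onto $\RR^k$, blow up a failure of Lipschitz lightness via Theorem \ref{prop:LLGH} and Lemma \ref{lem:canonicaltangent}, observe that the limit set $\hat E$ is still an $\alpha$-snowflake by Lemma \ref{lem:GHmaps}, and get a contradiction because a non-trivial connected subset of an $\alpha$-snowflake has Hausdorff dimension at least $1/\alpha > n-k = \dim_H\bigl(\pi^{-1}(p)\bigr)$. The paper closes the argument exactly with the Hausdorff-dimension bound you give in your final paragraph (non-trivial connected sets have $\dim_H \geq 1$, and snowflaking multiplies Hausdorff dimension by $1/\alpha$), not with your stronger intermediate claim that the connected set contains a bi-Lipschitz copy of $([0,1],|\cdot|^\alpha)$ --- rightly so, since a non-trivial connected set need not contain any arc, so you were correct to flag and replace that step.
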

\begin{proof}
Let $\pi:\RR^n \rightarrow \RR^k$ be an arbitrary choice of linear projection, which of course is $1$-Lipschitz. We claim that $\pi|_E$ is Lipschitz light.

Suppose not. Then, exactly as in Theorem \ref{thm:snowflake}, we find a set $\hat{E}\subseteq \RR^n$ such that
$$ ((\hat{E},0),(\RR^k,0),\pi) \in \WTan(\pi|_E),$$
and such that $\pi$ is constant on a non-trivial connected subset $C\subseteq\hat{E}$. Thus, $C$ is contained in some $\pi^{-1}(p)$, which is an $(n-k)$-plane in $\RR^n$.

On the other hand, $\hat{E}$ is also an $\alpha$-snowflake. Indeed, if $E$ is the image of a metric space $(Z,d)$ under an $\alpha$-snowflake map with constant $C$, then $\lambda E$ is the image of $(Z,\lambda^{1/\alpha} d)$ under an $\alpha$-snowflake map with constant $C$. It then follows from Proposition \ref{sublimit} and Lemma \ref{lem:GHmaps} that $\hat{E}$ is image of a metric space $\hat{Z}$ under an $\alpha$-snowflake map.

Since $\hat{E}$ is an $\alpha$-snowflake, each connected subset of $\hat{E}$ has Hausdorff dimension at least $\frac{1}{\alpha}$. (A connected set always has Hausdorff dimension at least $1$, and $\alpha$-snowflake maps multiply Hausdorff dimension by factor $\frac{1}{\alpha}$.)

Thus, using our assumption,
$$ \dim_H (C) = \frac{1}{\alpha} > n-k = \dim_H (\pi^{-1}(p)),$$
which is a contradiction to our observation that $C\subseteq \pi^{-1}(p)$.

Therefore, $\pi|_E$ must have been Lipschitz light, forcing $\dim_L (E) \leq k$.
\end{proof}

\subsection{Carnot groups}\label{subsec:carnot}
The so-called Carnot groups are central objects of study in the modern theory of analysis on metric spaces and non-smooth calculus. We begin this subsection with a very brief introduction to Carnot groups, referring the reader elsewhere for more background. We then show that non-abelian Carnot groups have infinite Lipschitz dimension, and follow that by discussing some consequences of this fact. We thank Bruce Kleiner for pointing out to us a number of years ago that Carnot groups should have infinite Lipschitz dimension.

\subsubsection{Background on Carnot groups}\label{subsub:carnotbackground}
We give a very brief background summary on Carnot groups. For more, we refer the reader to \cite{Mo,CDPT07, LD_primer}. Very little of the Lie group structure of Carnot groups is directly used in our arguments below, but it is necessary to set the stage.

A Carnot group is a simply connected nilpotent Lie group $\G$ whose Lie algebra $\mathfrak{g}$ admits a stratification
$$\mathfrak{g} = V_1 \oplus \cdots \oplus V_s,$$
where the first layer $V_1$ generates the rest via $V_{i+1} = [V_1,V_i]$ for all $1 \leq i \leq s$, and we set $V_{s+1} = \{0\}$.

Given an inner product $\langle\cdot,\cdot\rangle$ on the horizontal layer $V_1$, the associated sub-Riemannian Carnot--Carath\'eodory metric $d$ on $\G$ is defined by
$$ d(x,y) = \inf\left\{ \int_0^1 \langle \gamma'(t), \gamma'(t)\rangle ^{1/2} dt : \gamma \text{ horizontal curve joining } x \text{ to } y \right\},$$
where an absolutely continuous curve $\gamma \colon [0,1]\rightarrow\G$ is called \textit{horizontal} if $\gamma'(t) \in V_1$ for a.e. $t\in[0,1]$.

Like the standard Euclidean metric, which is just the special case in which the stratification has a single layer, the Carnot--Carath\'eodory distance $d$ is invariant under left-translations, the maps $L_x$ defined by $L_x(p)=x\cdot p$. It also admits a family of dilations: For each $\lambda>0$, there is a homeomorphism $\delta_\lambda\colon \G \rightarrow \G$ such that
$$ d(\delta_\lambda(x), \delta_\lambda(y)) = \lambda d(x,y) \text{ for each } x,y\in\G.$$
Together, these facts imply that every element of $\WTan(\G)$ is pointedly isometric to $(\G,0)$ itself.

The simplest non-abelian Carnot group is the (first) Heisenberg group $\mathbb{H}$. The underlying manifold of $\mathbb{H}$ is $\RR^3$, and its Lie algebra $\mathfrak{h}$ can be written
$$ \mathfrak{h} = V_1 \oplus V_2,$$
where $\dim(V_1)=2$, $\dim(V_2)=1$, $[V_1,V_1]=V_2$, and $[V_1,V_2]=0$. In exponential coordinates, $\mathbb{H}$ can be viewed as $\mathbb{C}\times\RR$ with the group law
$$ (z,t) \times (z',t') = \left( z+z', t+t'-\frac{1}{2}\text{Im}(z\overline{z'}) \right).$$
On the Heisenberg group, the Kor\'anyi metric
$$ d_K(p,q) = \|q^{-1} p\|,$$
where
$$ \|(z,t)\| = \left( |z|^4 + 16t^2 \right)^{1/4}$$
yields a bi-Lipschitz equivalent distance to $d$. (See \cite[p. 18]{CDPT07}.) If we define the standard projection $\pi:\mathbb{H}\rightarrow \mathbb{C}\cong\RR^2$ by $\pi(z,t)=z$, we see that $\pi$ is Lipschitz and that $\pi^{-1}(y)$ is a snowflake of $\RR$ for each $y\in\RR$.

The main result we will use below about Carnot groups is the celebrated Pansu differentiation theorem:
\begin{theorem}[Pansu \cite{Pa89}]\label{thm:Pansu}
Let $f \colon \G_1 \rightarrow \G_2$ be a Lipschitz map between Carnot groups. Then for almost every $x \in \G_1$, the sequence of maps
$$\delta_{\lambda} \circ (L_{f(x)^{-1}} \circ f \circ L_x) \circ \delta_{\lambda^{-1}}$$
converges uniformly on compact sets, as $\lambda \rightarrow \infty$, to a Lie group homomorphism $Df(x) \colon \G_1 \rightarrow \G_2$ that commutes with the dilations.
\end{theorem}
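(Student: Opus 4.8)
The statement is the sub-Riemannian analogue of Rademacher's theorem, and the plan is to run a blow-up argument: first produce a candidate derivative from almost-everywhere horizontal directional derivatives, and then show the intrinsic difference quotients converge uniformly to it. First I would fix a basis $X_1,\dots,X_m$ of the horizontal layer $V_1$ of $\mathfrak{g}_1$ and foliate $\G_1$ by the integral curves $t\mapsto x\cdot\exp(tX_i)$ of each left-invariant field $X_i$. Restricting the $C$-Lipschitz map $f$ to such a curve yields a Lipschitz curve into $\G_2$, which is differentiable for almost every $t$ by the classical one-dimensional Rademacher theorem; combining this with Fubini along the foliation and intersecting over the finitely many $i$, I obtain a full-measure set of $x\in\G_1$ at which every horizontal derivative $X_if(x)$ exists. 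One checks these derivatives are horizontal in $\G_2$, so they prescribe a linear map $V_1\to V_1'$, which extends uniquely to a graded Lie algebra homomorphism $\mathfrak{g}_1\to\mathfrak{g}_2$ (since $V_1$ generates $\mathfrak{g}_1$) and hence to a homogeneous group homomorphism; this is the candidate derivative $Df(x)$.

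Next I would study the rescaled maps $f_{x,\lambda}=\delta_\lambda\circ L_{f(x)^{-1}}\circ f\circ L_x\circ\delta_{\lambda^{-1}}$. Because left translations are isometries, $\delta_\lambda$ scales $d$ by exactly $\lambda$, and $f$ is $C$-Lipschitz, each $f_{x,\lambda}$ is $C$-Lipschitz and fixes the identity; the family is therefore equicontinuous and locally bounded, so Arzel\`a--Ascoli gives, along any sequence $\lambda_k\to\infty$, a subsequence on which $f_{x,\lambda_k}$ converges uniformly on compact sets to some $C$-Lipschitz map $g$. The remaining task is to identify every such limit with $Df(x)$ for almost every $x$. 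The existence of the horizontal derivatives forces $g(\exp(tX_i))=\exp(t\,X_if(x))$, so $g$ agrees with $Df(x)$ along the horizontal one-parameter subgroups; it then remains to show $g$ is a dilation-commuting homomorphism, whereupon uniqueness of the homomorphism extending the prescribed horizontal data gives $g=Df(x)$. Since the subsequential limit is thus independent of the chosen subsequence, the full family $f_{x,\lambda}$ converges to $Df(x)$ as $\lambda\to\infty$.

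The hard part will be the last identification: promoting almost-everywhere \emph{horizontal directional} differentiability to genuine Pansu differentiability and proving that the blow-up limit $g$ is a homomorphism. Unlike the Euclidean case, the horizontal derivatives do not transparently control the behavior of $f$ in the higher strata $V_2,\dots,V_s$, so one must handle the commutators of the $X_i$; the standard remedy is to work at common Lebesgue points of all the directional derivatives and to join arbitrary points of $\G_1$ by controlled concatenations of horizontal segments (via Chow--Rashevskii and the ball-box estimate), verifying that $g$ respects the group law on a generating set. Making this uniform and measurable, so that the same homomorphism is recovered for almost every $x$ regardless of the approximating scales, is the technical core of the result; it is exactly the content of Pansu's original argument in \cite{Pa89}, which we invoke as a black box.
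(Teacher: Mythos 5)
First, a contextual point: the paper does not prove this statement at all. It is Pansu's differentiation theorem, quoted from \cite{Pa89} and used purely as a black box (its only role is in the proofs of Theorem \ref{thm:Carnot} and the corollaries that follow). So the only meaningful comparison is with Pansu's original argument, and as a proof your proposal is circular: by your own description, the ``technical core'' --- promoting almost-everywhere horizontal differentiability to genuine Pansu differentiability and showing the blow-up limit is a homomorphism --- is ``exactly the content of Pansu's original argument in \cite{Pa89}, which we invoke as a black box.'' Invoking the heart of the theorem being proved is not a proof. What you have written is a correct high-level outline of the standard strategy (horizontal derivatives via one-dimensional Rademacher plus Fubini, equicontinuity and Arzel\`a--Ascoli for the rescaled maps, identification of subsequential limits), but the two genuinely hard steps --- that every blow-up limit is a dilation-commuting homomorphism, and that the horizontal data controls the behavior in the higher strata $V_2,\dots,V_s$ --- are precisely the ones you do not supply.

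Second, there is a concrete mathematical slip in your construction of the candidate derivative. You assert that the linear map $V_1 \to V_1'$ given by the horizontal derivatives ``extends uniquely to a graded Lie algebra homomorphism $\mathfrak{g}_1\to\mathfrak{g}_2$ (since $V_1$ generates $\mathfrak{g}_1$).'' Generation of $\mathfrak{g}_1$ by $V_1$ gives \emph{uniqueness} of such an extension, not \emph{existence}. Existence fails for general linear maps on the first stratum: take $\G_1=\RR^2$ (abelian) and $\G_2=\bH$; a linear isomorphism from $V_1=\RR^2$ onto the horizontal stratum of $\mathfrak{h}$ admits no homomorphic extension, since a Lie algebra homomorphism would have to send $[X_1,X_2]=0$ to the bracket of the images, which is a nonzero element of the center of $\mathfrak{h}$. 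That the horizontal derivatives of a \emph{Lipschitz} map do assemble into a graded homomorphism is itself part of the content of Pansu's theorem --- indeed it is exactly the feature exploited in this paper's Theorem \ref{thm:Carnot}, where $Df(x)$ is forced to kill the commutator subgroup --- and proving it is where the Lipschitz hypothesis and the Chow--Rashevskii connectivity argument must actually be used. As written, your candidate $Df(x)$ is not well defined at the point where you introduce it, so the subsequent identification of subsequential limits with $Df(x)$ has nothing to attach to.
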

We will use Theorem \ref{thm:Pansu} below in the case where $\G_1$ is non-abelian and $\G_2$ is a Euclidean space $\RR^n$. In that case, we note that $Df(x)$ must collapse the (connected) commutator subgroup of $\G_1$. Note also that, in the setting of Theorem \ref{thm:Pansu},
$$ ((\G_1,0),(\G_2,0), Df(x))\in \Tan(f,x) \subseteq \WTan(f).$$

Carnot groups are doubling metric spaces. (See \cite[p. 1]{LD_primer} and note that Ahlfors regular spaces are always doubling.), Therefore, they have finite Hausdorff and Assouad dimension. In addition, their Nagata dimensions are equal to their topological dimensions \cite[Theorem 4.2]{LDR}, and hence also finite, though generally smaller.

Nonetheless, in the next section, we show that non-abelian Carnot groups have infinite Lipschitz dimension.

\subsubsection{Lipschitz dimension of Carnot groups}
\begin{theorem}\label{thm:Carnot}
If $\G$ is a non-abelian Carnot group, then $\dim_L (\G) = \infty$.
\end{theorem}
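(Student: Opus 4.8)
The plan is to show that $\G$ admits no Lipschitz light map into any $\RR^n$, by using Theorem \ref{prop:LLGH} to reduce the problem to the study of weak tangents. Suppose for contradiction that $f\colon \G \rightarrow \RR^n$ is Lipschitz light for some $n$. Since $\G$ is a complete doubling metric space (as noted in the background subsection), Theorem \ref{prop:LLGH} applies, and in particular every weak tangent of $f$ must be light. The strategy is to produce a \emph{single} weak tangent of $f$ that fails to be light, yielding the desired contradiction.

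The natural candidate for a non-light weak tangent is the Pansu derivative. By Theorem \ref{thm:Pansu}, for almost every $x\in\G$ the blow-ups $\delta_\lambda \circ (L_{f(x)^{-1}}\circ f \circ L_x)\circ \delta_{\lambda^{-1}}$ converge uniformly on compact sets to a group homomorphism $Df(x)\colon \G \rightarrow \RR^n$ commuting with dilations. The key algebraic observation, already flagged in the remark following Theorem \ref{thm:Pansu}, is that because $\RR^n$ is abelian, any group homomorphism from $\G$ into $\RR^n$ must annihilate the commutator subgroup $[\G,\G]$; since $\G$ is non-abelian, this commutator subgroup is a non-trivial connected subgroup. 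Thus $Df(x)$ is constant on the non-trivial connected set $[\G,\G]$ (and on each of its left translates), so $Df(x)$ is not light. First I would fix such a point $x$ of differentiability, observe that left-translations and dilations are isometries up to the scaling factor so that the blow-up $((\G,0),(\RR^n,0),Df(x))$ genuinely lies in $\Tan(f,x)\subseteq \WTan(f)$ (as the excerpt records), and then invoke Theorem \ref{prop:LLGH} to conclude that $f$ cannot have been Lipschitz light.

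The main obstacle, and the step requiring the most care, is matching the precise form of the Pansu blow-up in Theorem \ref{thm:Pansu} with the definition of weak tangent used in the Gromov--Hausdorff machinery of Section \ref{sec:GH}. I need to verify that the conjugation by left-translations $L_{f(x)^{-1}}$ and $L_x$ does not alter the weak-tangent-package structure --- but this is exactly the content of left-invariance of the Carnot--Carath\'eodory metric, which guarantees that $(\G, \lambda d, x)$ is pointedly isometric to $(\G,\lambda d, 0)$, and likewise in the target. Combined with the stated fact that every element of $\WTan(\G)$ is pointedly isometric to $(\G,0)$, this shows the blow-up sequence defining $Df(x)$ is, up to the relevant isometric identifications of base-pointed spaces, precisely a weak tangent of $f$ in the sense of our definition. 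Once this identification is in place, the argument is essentially complete.

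I should note that no genuine measure-theoretic difficulty arises here: I need only a \emph{single} point $x$ at which Pansu differentiability holds, and Theorem \ref{thm:Pansu} guarantees such points exist (indeed almost everywhere). The conclusion $\dim_L(\G)=\infty$ then follows because the choice of $n$ was arbitrary, so $\G$ admits no Lipschitz light map into any Euclidean space whatsoever.
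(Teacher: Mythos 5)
Your proposal is correct and follows essentially the same route as the paper: both produce the Pansu derivative $Df(x)$ as a weak tangent of the supposed Lipschitz light map, note that as a homomorphism into the abelian group $\RR^n$ it collapses the non-trivial connected commutator subgroup, and contradict the persistence of lightness under weak tangents. The only cosmetic difference is that you cite the implication (i) $\Rightarrow$ (iii) of Theorem \ref{prop:LLGH} while the paper invokes Corollary \ref{cor:LLtan} directly, but these are the same machinery.
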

\begin{proof}
Suppose there was a Lipschitz light map $f\colon \G\rightarrow \RR^n$. Then by Theorem \ref{thm:Pansu}, and the remark following, there is
$$ ((\G,0),(\RR^n,0), Df(x))\in \WTan(f)$$
such that $Df(x)$ is a group homomorphism that commutes with dilations. In particular, $Df(x)$ must collapse the (connected) commutator subgroup of $\G$ to a point.

However, the mapping $Df(x)$ is Lipschitz light by Corollary \ref{cor:LLtan}, so cannot collapse a connected set to a point. This is a contradiction.
\end{proof}

We note that the same result holds for positive-measure subsets of Carnot groups:
\begin{corollary}\label{cor:carnotsubset}
Let $\G$ be a non-abelian Carnot group and let $K\subseteq \G$ be compact with positive measure. Then $\dim_L (K) = \infty$.
\end{corollary}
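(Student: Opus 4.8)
The plan is to mimic the proof of Theorem \ref{thm:Carnot}, the key observation being that the obstruction there is \emph{local}: a Lipschitz light map must produce a Pansu derivative that is light, but any nontrivial homomorphic derivative collapses the commutator subgroup. For a positive-measure subset the difficulty is that $K$ need not be a group, so I cannot apply Pansu's theorem directly to $f|_K$. Instead I would first extend the hypothetical Lipschitz light map $f\colon K\rightarrow\RR^n$ to a Lipschitz map $\tilde f\colon\G\rightarrow\RR^n$ defined on all of $\G$, using McShane's extension theorem (\cite[Theorem 6.2]{He}), as was done in the proof of Proposition \ref{lem:union}. The extension $\tilde f$ is Lipschitz but presumably not Lipschitz light on all of $\G$; the point is only to have a map defined on the group so that Pansu differentiation applies.

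The next step is to apply Pansu's theorem (Theorem \ref{thm:Pansu}) to $\tilde f$. This gives differentiability of $\tilde f$ at almost every point of $\G$. Since $K$ has positive measure, I can choose a point $x$ that is simultaneously a point of differentiability of $\tilde f$ and a point of density of $K$. At such a point, the rescalings $\delta_\lambda\circ(L_{\tilde f(x)^{-1}}\circ\tilde f\circ L_x)\circ\delta_{\lambda^{-1}}$ converge to the homomorphism $D\tilde f(x)$, while the corresponding rescalings of $K$ about $x$ converge (by the density hypothesis and the self-similarity of $\G$ noted in subsection \ref{subsub:carnotbackground}) to all of $\G$. Concretely, because every element of $\WTan(\G)$ is pointedly isometric to $(\G,0)$ and $x$ is a density point, the blow-ups of $K$ at $x$ fill out $\G$, so the blow-up of $f|_K$ at $x$ is exactly the blow-up of $\tilde f$ at $x$, namely $D\tilde f(x)$. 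Thus I obtain a weak tangent $((\G,0),(\RR^n,0),D\tilde f(x))\in\WTan(f|_K)$.

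Now the contradiction is identical to that of Theorem \ref{thm:Carnot}: on the one hand $D\tilde f(x)$ is a Lie group homomorphism commuting with dilations, so since $\G$ is non-abelian it must collapse the connected commutator subgroup to a point and hence fails to be light; on the other hand, $f|_K$ is Lipschitz light by hypothesis, so by Corollary \ref{cor:LLtan} every element of $\WTan(f|_K)$ is Lipschitz light, hence light. This is impossible, so no Lipschitz light map $K\rightarrow\RR^n$ exists and $\dim_L(K)=\infty$.

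The step I expect to be the main obstacle is making precise that the blow-up of $f|_K$ at a density point $x$ genuinely recovers the full Pansu derivative $D\tilde f(x)$ on all of $\G$, rather than merely on a proper subset. This requires carefully combining three facts: that the rescaled copies of $K$ Gromov--Hausdorff converge to $\G$ (needing the density-point hypothesis together with the self-similar structure of $\G$ and Lemma \ref{subsetconvergence}), that $f$ and $\tilde f$ agree on $K$, and that the limiting map is determined by its values on the limiting set via Lemma \ref{lem:canonicaltangent}. Once the weak tangent is correctly identified as $D\tilde f(x)$, the rest follows verbatim from the argument for Theorem \ref{thm:Carnot}.
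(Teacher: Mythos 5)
Your proposal is correct, but it takes a genuinely different and considerably longer route than the paper. The paper's proof is two lines: it cites a result of Le Donne (\cite[Proposition 3.1]{LDtan}) asserting that a compact positive-measure subset $K$ of a Carnot group has, at some point, a tangent $(\hat{K},\hat{x})\in\Tan(K,x)$ isometric to $\G$ itself, and then concludes via Corollary \ref{cor:Ldimtan} that $\dim_L(K)\geq\dim_L(\hat K)=\dim_L(\G)=\infty$ by Theorem \ref{thm:Carnot}. Your argument, in effect, re-proves both ingredients from scratch: your density-point blow-up claim (that rescalings of $K$ at a density point converge to all of $\G$, using the Ahlfors regularity of the Haar measure) is precisely the content of the cited Le Donne proposition, and your McShane-extension-plus-Pansu step re-runs the proof of Theorem \ref{thm:Carnot} rather than invoking it. Both steps of yours are sound --- the identification of the tangent mapping package with $\bigl((\G,0),(\RR^n,0),D\tilde f(x)\bigr)$ goes through because the rescaled copies of $K$ converge to $\G$ while the rescalings of $\tilde f$ converge uniformly on compacta to $D\tilde f(x)$ --- but note that the extension and Pansu differentiation are actually unnecessary: once you know $(\G,0)\in\Tan(K,x)$ at a density point, Corollary \ref{cor:Ldimtan} and Theorem \ref{thm:Carnot} finish the proof immediately, with no need to ever differentiate a map defined on $K$. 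What your approach buys is self-containedness (no citation beyond Pansu's theorem, which the paper already uses); what the paper's buys is brevity and modularity, reusing Theorem \ref{thm:Carnot} as a black box applied to the tangent space.
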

\begin{proof}
By \cite[Proposition 3.1]{LDtan}, there is a point $x\in K$ and a tangent $(\hat{K},\hat{x})\in \Tan(K,x)$ such that $\hat{K}$ is isometric to $\G$. It follows from Corollary \ref{cor:Ldimtan} that 
$$ \dim_L (K) \geq \dim_L (\hat{K}) = \dim_L (\G) = \infty.$$
\end{proof}

\subsubsection{Quasi-isometric non-embedding for Carnot groups}
As a corollary of Theorem \ref{thm:Carnot}, we prove a ``coarse'' non-embedding result for Carnot groups, Corollary \ref{cor:carnotcoarse}. Our theorem overlaps with a result of Pauls \cite{Pauls}, but our approach is somewhat different.

We recall a notion from coarse geometry: A \textit{quasi-isometric embedding} of a space $X$ into a space $Y$ is a (not necessarily continuous) map $g:X\rightarrow Y$ with constants $C\geq 1$ and $\epsilon>0$ such that
$$ C^{-1}d(x,x') -\epsilon \leq d(g(x),g(x')) \leq Cd(x,x') + \epsilon$$
for all $x,x'\in X$. Quasi-isometric embeddings are coarse generalizations of bi-Lipschitz embeddings.

Our methods give a short proof of the following result.
\begin{corollary}\label{cor:carnotcoarse}
If $\G$ is a non-abelian Carnot group, then $\G$ does not admit a quasi-isometric embedding into any space of finite Lipschitz dimension. In particular, $\G$ does not admit a quasi-isometric embedding into any finite product of trees or finite-rank Euclidean building.
\end{corollary}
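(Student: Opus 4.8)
The plan is to argue by contradiction, reducing the non-embedding statement to the computation $\dim_L(\G) = \infty$ from Theorem \ref{thm:Carnot}. Suppose $g\colon \G \to Y$ is a quasi-isometric embedding with constants $C \geq 1$ and $\epsilon > 0$, and suppose $\dim_L(Y) = n < \infty$. The obstruction to exploiting $g$ directly is that a quasi-isometric embedding carries an additive error term and need not even be continuous, so none of the Gromov--Hausdorff machinery of Section \ref{sec:GH} applies to it as stated. The key device is that this additive error becomes harmless once one restricts to a sufficiently sparse net.

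First I would fix a maximal $K$-separated set $N \subseteq \G$ with $K := 2C\epsilon$; such a set is automatically a $K$-net, hence $K$-dense. For distinct $x, x' \in N$ one has $d(x,x') \geq K = 2C\epsilon$, so the additive constant $\epsilon$ is dominated by $\tfrac{1}{2C} d(x,x')$, and the two quasi-isometry inequalities collapse to
$$ \tfrac{1}{2C}\, d(x,x') \leq \rho(g(x), g(x')) \leq 2C\, d(x,x'). $$
Thus $g|_N$ is a $2C$-bi-Lipschitz embedding of $N$ onto the subset $g(N) \subseteq Y$. Since the restriction of any Lipschitz light map to a subset is again Lipschitz light, Lipschitz dimension is monotone under passing to subsets, so $\dim_L(g(N)) \leq \dim_L(Y) = n$; combined with bi-Lipschitz invariance this gives $\dim_L(N) = \dim_L(g(N)) \leq n$. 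Note that this argument needs no hypothesis on $Y$ beyond finiteness of its Lipschitz dimension, which is what the statement provides.

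It remains to transfer this bound from the net $N$ back to $\G$ itself, and here I would use self-similarity. The set $N$ is complete (being $K$-separated) and doubling (being a subset of the doubling group $\G$). Fixing a basepoint $n_0 \in N$ and a sequence $\lambda_k \to 0$, the rescaled ambient spaces $(\G, \lambda_k d, n_0)$ are pointedly isometric to $(\G, 0)$ via the dilations described in subsection \ref{subsub:carnotbackground}, and $N$ is $\lambda_k K$-dense in each, with $\lambda_k K \to 0$. By the subset-convergence Lemma \ref{subsetconvergence}, a subsequence of $(N, \lambda_k d, n_0)$ converges to a closed subset of $\G$ which, by density, must be all of $\G$. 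Hence $(\G, 0) \in \WTan(N)$, and Corollary \ref{cor:Ldimtan} yields $\dim_L(\G) \leq \dim_L(N) \leq n < \infty$, contradicting Theorem \ref{thm:Carnot}. The ``in particular'' clause then follows at once, since finite products of metric trees and finite-rank Euclidean buildings have finite Lipschitz dimension by Corollary \ref{cor:treebuilding}, and so are covered by the general statement.

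The step I expect to require the most care is the verification that $(\G,0)$ is a weak tangent of the net $N$: one must confirm that the rescaled nets converge to all of $\G$ in the sense of Definition \ref{spaceconvergence}, using both the dilation invariance of $\G$ (so that the rescaled ambient spaces remain isometric to $(\G,0)$) and the density of $N$ (so that the subsequential limit of Lemma \ref{subsetconvergence} fills out $\G$), and that the completeness and uniform-doubling hypotheses of Lemma \ref{subsetconvergence} and Corollary \ref{cor:Ldimtan} hold for $N$. By contrast, the bi-Lipschitz estimate on the net and the subset-monotonicity of $\dim_L$ are routine.
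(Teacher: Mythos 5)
Your proposal is correct and follows essentially the same route as the paper's own proof: restrict the quasi-isometric embedding to a $2C\epsilon$-net to obtain a bi-Lipschitz embedding (hence $\dim_L(N)<\infty$), exhibit $(\G,0)$ as a weak tangent of the net by zooming out via the dilations, and contradict Theorem \ref{thm:Carnot} through Corollary \ref{cor:Ldimtan}. The only difference is that you spell out details the paper leaves implicit (the bi-Lipschitz estimate on the net, subset-monotonicity of $\dim_L$, and the convergence of the rescaled nets via Lemma \ref{subsetconvergence}), all of which are verified correctly.
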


The statement about trees and buildings in Corollary \ref{cor:carnotcoarse} already follows from a general result of Pauls \cite[Theorem C]{Pauls}. Both approaches rely at heart on Pansu's theorem. One advantage of our short approach is that it does not require proving a ``metric differentiation'' form of Pansu's theorem (see \cite[Theorem 4.7]{Pauls}) but rather relies directly on the original result of Pansu. On the other hand, Pauls' result allows for quite general targets, including infinite-dimensional spaces, which our result does not address. 

\begin{proof}[Proof of Corollary \ref{cor:carnotcoarse}]
Suppose $\G$ admits a quasi-isometric embedding $g\colon \G \rightarrow Y$, where $Y$ has finite Lipschitz dimension. There are constants $C\geq 1$ and $\epsilon>0$ such that
$$ C^{-1}d(x,x') -\epsilon \leq d(g(x),g(x')) \leq Cd(x,x') + \epsilon$$
for all $x,x'\in \G$.

Let $N$ be a $2C\epsilon$-net in $\G$ containing $0$. On the one hand, $g|_N$ is easily seen to be a bi-Lipschitz embedding of $N$ into $Y$, and therefore $N$ has finite Lipschitz dimension.

On the other hand, the pointed spaces $\left(\delta_{1/k}(N),0\right)$ converge to the pointed space $(\G,0)\in \WTan(N)$ as $k\in\mathbb{N}$ tends to infinity. It follows from Corollary \ref{cor:Ldimtan} that
$$ \dim_L(\G) \leq \dim_L(N) < \infty,$$
which contradicts Theorem \ref{thm:Carnot}. Therefore, there can be no such quasi-isometric embedding $g$.

The statement about trees and buildings now follows from Corollary \ref{cor:treebuilding}.
\end{proof}

\subsubsection{Carnot groups as counterexamples}

We close this discussion of Carnot groups by observing that they, in particular the first Heisenberg group $\mathbb{H}$, provide counterexamples to two natural hopes for Lipschitz dimension. 

First of all, in contrast to Proposition \ref{lem:union}, we observe that the finiteness of Lipschitz dimension is not stable under countable unions, even locally finite ones. Indeed, consider a $1$-net $N$ in the Heisenberg group $\bH$, with $0\in N$. Exactly as in the proof of Corollary \ref{cor:carnotcoarse}, we must have $\dim_L(N)=\infty$, even though $N$ is countable.

Next, we observe that the Heisenberg group also serves as a counterexample to any ``Hurewicz-type'' theorem for Lipschitz dimension. Recall first the classical Hurewicz theorem for topological dimension, which we state in the compact case: If $f:X\rightarrow Y$ is a continuous map between compact metric spaces, then 
$$ \dim_T (X) \leq \dim_T (Y) + \sup\{\dim_T \left(f^{-1}(y)\right): y\in Y\}.$$
See, for example, \cite[Theorem III.6]{Nagata}.

No such result holds with $\dim_L$ replacing $\dim_T$: Let $X$ denote the closed unit ball in the Heisenberg group $\mathbb{H}$, let $\pi:X\rightarrow Y:=\RR^2$ denote the restriction to $X$ of the standard projection from $\mathbb{H}$ to $\RR^2$. Then $\dim_T Y = 2$. Furthermore, for each $y\in \RR^2$, $\pi^{-1}(y)$ is contained in a space that is bi-Lipschitz equivalent to a $\frac{1}{2}$-snowflake of $\RR$, and hence has Lipschitz dimension $\leq 1$ by Theorem \ref{thm:snowflake}. However, $X$ itself has infinite Lipschitz dimension.

\subsection{Subsets of $\RR$}
In this subsection, we characterize the Lipschitz dimension of subsets of $\RR$ by a simple metric property.

\begin{definition}
A set $E$ in a metric space $X$ is called \textit{porous}, with constant $c>0$, if for every $x\in E$ and $r>0$, there is a point $y$ with
$$ B(y,cr) \subseteq B(x,r) \setminus E.$$
\end{definition}

\begin{proposition}\label{prop:porous}
Let $E\subseteq \RR$. Then the following are equivalent
\begin{enumerate}[(i)]
\item $E$ has Lipschitz dimension $0$.
\item Every weak tangent of $E$ is totally disconnected.
\item $E$ is porous.
\end{enumerate}
\end{proposition}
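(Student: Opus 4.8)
The plan is to establish the cyclic chain of implications (i) $\Rightarrow$ (ii) $\Rightarrow$ (iii) $\Rightarrow$ (i). Since each of the three conditions is unchanged when $E$ is replaced by its closure $\overline{E}$ (a routine verification: porosity and the $r$-component bound are insensitive to adding limit points, and the sets $\rho^{-1}(E - x)$ and $\rho^{-1}(\overline{E} - x)$ have the same limits in the sense of Definition \ref{sets}), I may assume throughout that $E$ is closed, hence complete and doubling, so that the Gromov--Hausdorff machinery of Section \ref{sec:GH} applies. I will also use the elementary reformulation of (i): since $\RR^0$ is a single point, a map $E \to \RR^0$ is forced to be constant and is Lipschitz light precisely when there is a constant $C$ with $\diam(I) \le Cr$ for every $r$-component $I$ of $E$ and every $r > 0$. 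Thus $\dim_L(E) = 0$ is equivalent to this uniform bound on the diameters of $r$-components.

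For (i) $\Rightarrow$ (ii), let $(Z,z) \in \WTan(E)$. Corollary \ref{cor:Ldimtan} gives $\dim_L(Z) \le 0$, so there is a Lipschitz light map $g \colon Z \to \RR^0$. A Lipschitz light map cannot collapse a non-degenerate connected set to a point (exactly as in the final step of the proof of Theorem \ref{prop:LLGH}, since such a map is light); as $g$ sends all of $Z$ to the single point of $\RR^0$, the space $Z$ can contain no non-degenerate connected subset, i.e. $Z$ is totally disconnected.

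The main work is in (ii) $\Rightarrow$ (iii), which I would prove contrapositively by manufacturing a connected weak tangent from the failure of porosity, and which I expect to be the main obstacle. If $E$ is not porous, then for each $k \in \mathbb{N}$ there exist $x_k \in E$ and $\rho_k > 0$ such that $B(x_k,\rho_k) \setminus E$ contains no interval of length $2\rho_k/k$. Rescaling, the sets $E_k := \rho_k^{-1}(E - x_k)$ then satisfy $\dist(p, E_k) < 1/k$ for every $p \in (-1 + 1/k,\, 1 - 1/k)$. Passing to a subsequence (Lemma \ref{subsetconvergence}, using Lemma \ref{lem:Rncompactness}), the $E_k$ converge in the sense of Definition \ref{sets} to a closed set $\hat{E} \subseteq \RR$ with $(\hat{E}, 0) \in \WTan(E)$. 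Because the $E_k$ become $(1/k)$-dense in $(-1,1)$ and $1/k \to 0$, every point of $(-1,1)$ is a limit of points of $E_k$ and so lies in $\hat{E}$; hence $[-1,1] \subseteq \hat{E}$, exhibiting a weak tangent containing a non-degenerate connected set and contradicting (ii). The delicate points are to transcribe the rescaled density condition correctly and to confirm, via the two halves of Definition \ref{sets}, that the limit set genuinely contains the full interval rather than merely a dense subset of it.

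Finally, for (iii) $\Rightarrow$ (i), suppose $E$ is porous with constant $c$, and bound the diameter of $r$-components directly. The key observation is that a porosity gap blocks $r$-paths: if an interval $(u,v) \subseteq \RR$ is disjoint from $E$ with $v - u > r$, then no $r$-path in $E$ can have points on both sides of $(u,v)$, since crossing it requires a single jump of length $v - u > r$. Now let $I$ be an $r$-component and take $a, b \in I$ with $a < b$; being joined by an $r$-path, $E \cap [a,b]$ has no $E$-free subinterval of length exceeding $r$, so $E$ is $r$-dense in $[a,b]$. If $b - a > r(c+2)/c$, then the middle portion $[a + r/c,\, b - r/c]$ would have length exceeding $r$ and hence contain a point $x \in E$; applying porosity at $x$ with scale $\rho = r/c$ produces an $E$-free interval of length $2r > r$ lying inside $(a,b)$, which blocks the $r$-path from $a$ to $b$ — a contradiction. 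Thus $\diam(I) \le r(c+2)/c$ for every $r$-component, uniformly in $r$, which is exactly the reformulation of $\dim_L(E) = 0$ noted above.
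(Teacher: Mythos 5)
Your proof is correct, and its overall architecture---the cyclic chain (i) $\Rightarrow$ (ii) $\Rightarrow$ (iii) $\Rightarrow$ (i)---is the same as the paper's, with the first two legs essentially identical: for (i) $\Rightarrow$ (ii) the paper passes through topological dimension via Observation \ref{obs:liptop}, while you use lightness of the constant map directly (slightly cleaner, since it sidesteps the $\sigma$-compactness caveat of Remark \ref{rmk:liptop}); for (ii) $\Rightarrow$ (iii) both proofs run the same contrapositive rescaling argument producing a weak tangent containing $[-1,1]$, and the density and interval-containment checks you flag as delicate do go through exactly as you describe. The genuine divergence is in (iii) $\Rightarrow$ (i). The paper stays inside the Gromov--Hausdorff framework: porosity forbids any weak tangent of $E$ from containing a non-trivial interval, so every weak tangent of the constant map $\kappa\colon E\rightarrow\RR^0$ is light, and the characterization Theorem \ref{prop:LLGH} then upgrades this to $\kappa$ being Lipschitz light. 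You instead argue directly and quantitatively: an $E$-free interval of length $>r$ blocks $r$-paths in $\RR$, and applying porosity at scale $r/c$ to a point in the middle of a putatively long $r$-component manufactures such a gap, yielding the explicit bound $\diam(I)\leq r(c+2)/c$. Your route buys an effective Lipschitz light constant in terms of the porosity constant $c$ and requires no completeness or compactness machinery for this leg (so your reduction to closed $E$---a point the paper silently glosses over---is only needed for the two implications involving weak tangents); the paper's route is shorter on the page because Theorem \ref{prop:LLGH} has already been established, and it has the side benefit of proving (iii) $\Rightarrow$ (ii) along the way, displaying the proposition as a direct application of the weak-tangent characterization.
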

Statements analogous to the equivalence (ii) $\Leftrightarrow$ (iii) hold in all $\RR^n$ and are standard.
\begin{proof}[Proof of Proposition \ref{prop:porous}]
First, assume $E$ has Lipschitz dimension $0$.  Then every weak tangent of $E$ has Lipschitz dimension $0$, by Corollary \ref{cor:Ldimtan}, hence topological dimension $0$, hence is totally disconnected. Thus, (i) implies (ii).

Suppose $E$ satisfied condition (ii) but $E$ was not porous. Then we could find balls $B(x_i, r_i)\subseteq \RR$, with $x_i\in E$, such that
$$ N_{1/i}(\overline{B}(x_i,r_i) \cap E) \supseteq \overline{B}(x_i,r_i).$$
Passing to a weak tangent of $E$ along the sequence of scales $\lambda_i=1/r_i$ and the sequence of points $x_i$, we obtain a weak tangent $(\hat{E},0)\in\WTan(E)$ such that $\hat{E}$ contains an isometric copy of $[-1,1]$. This contradicts (ii), proving that (ii) implies (iii). 

Finally, suppose $E\subseteq \RR$ is porous. Then no weak tangent of $E$ contains a non-trivial interval. Indeed, suppose $\{(\lambda_k E, x_k)\}$ converged to a pointed metric space $(\hat{E},0)$ containing a non-trivial interval. Then, along a subsequence, the sets $\{\lambda_k (E - x_k)\}$ would converge in $\RR$ to a set $F$ that is isometric to $\hat{E}$, and so contains a non-trivial interval. It would follow that, for arbitrarily large $\lambda$, there is an interval $I_\lambda$ such that $E\cap I_\lambda$ is $\frac{1}{\lambda}$-dense in $I_\lambda$, which contradicts the porosity of $E$. 

Thus, every weak tangent mapping of the constant map $\kappa:E\rightarrow\RR^0$ is light, and hence $\kappa:E\rightarrow \RR^0$ is Lipschitz light. This proves that (iii) implies (i).
\end{proof}

For sets in $\mathbb{R}^n$, we do not know whether having Lipschitz dimension $\leq n-1$ is equivalent to porosity.

One direction is clear: If a set in $\RR^n$ has Lipschitz dimension $\leq n-1$, it must be porous. If it is not, then by an argument similar to that in Proposition \ref{prop:porous}, it has a weak tangent containing an isometric copy of a ball in $\RR^n$, contradicting Corollary \ref{cor:Ldimtan}.

\begin{question}\label{q:lipdimporous}
Is it true that a set $E\subseteq \RR^n$ has Lipschitz dimension $\leq n-1$ if and only if it is porous?
\end{question}

\begin{remark}
The answer to Question \ref{q:lipdimporous} is ``yes'' if one replaces Lipschitz dimension by Nagata dimension. One direction (Nagata dimension $\leq n-1$ implies porosity) follows from the same argument as in the remark above Question \ref{q:lipdimporous}, since Nagata dimension can also only drop under weak tangents \cite[Proposition 2.18]{LDR}. For the other direction, it is well-known that porous subsets of $\RR^n$ have Assouad dimension $<n$ and hence Nagata dimension $\leq n-1$ by \cite[Theorem 1.1]{LDR}.
\end{remark}

\subsection{Self-covering sets and classical fractals}\label{subsec:fractal}
In this subsection, our goal is to show that some classical fractals in the plane have Lipschitz dimension $1$. As concrete examples, our results apply to the standard Sierpi\'nski carpets $S_p$, indexed by odd integers $p\geq 3$. Recall that for such $p$, the ``first generation'' $S^1_p\subseteq \RR^2$ is formed by dividing the unit square into axis-parallel subsquares of side length $\frac{1}{p}$ in the usual way and removing the central square. The $n$th generation $S^n_p$ is formed by doing the same procedure on each of the squares of side length $p^{-(n-1)}$ remaining in $S^{n-1}_p$. The Sierpi\'nski carpet $S_p$ is defined as $\cap_{n=1}^\infty S^n_p$.

Our results will also apply to the standard Sierpi\'nski gasket, similarly formed by starting with an equilateral triangle in the plane, dividing it into four congruent equilateral triangles, removing the central triangle, and then iterating this construction on the remaining three triangles of half the size. See, for example, \cite[p. 7-8]{DS97} for pictures and descriptions of these constructions.

We frame our result for a certain class of sets that includes the above examples, which we now describe. For a compact set $K$, a \textit{rescaled translate} of $K$ is a set of the form $sK+v$ for some $s>0$ and $v\in\mathbb{R}^n$. 
\begin{definition}\label{def:sc}
We call a compact set $K\subseteq \RR^n$ \textit{self-covering} if there are constants $N\in\mathbb{N}$ and $C>0$ such that the following holds: For each $x\in K$ and $r>0$, there are rescaled translate $K_1, \dots, K_M$ of $K,$ inside $K$, such that 
\begin{itemize}
\item $M\leq N$,
\item $\diam K_i \leq Cr$, and
\item $\overline{B}(x,r)\cap K \subseteq \cup K_i$.
\end{itemize}
\end{definition}
In other words, a set $K$ is self-covering if every ball of radius $r$ in $K$ can be covered by a controlled number of rescaled copies of $K$ of size at most $Cr$. Note that we allow the copies of $K$ covering $B(x,r)\cap K$ to overlap arbitrarily and to contain points outside of $K$, but we do not allow rotations.

It is easy to see that the Sierpi\'nski carpets $S_p$ and the Sierpi\'nski gasket are self-covering. On the other hand, the self-covering property is somewhat different than standard notions of self-similarity; for example, the set $[0,1] \cup [2,3]$ in $\RR$ is self-covering. For non-examples, we point out that the set $\{0\}\cup\{1, \frac{1}{2}, \frac{1}{3}, \dots\}$ in $\RR$ and the unit circle $\mathbb{S}^1$ in $\RR^2$ are examples of non-self-covering sets.

Of course, the whole unit cube in $\RR^n$ is also an example of a self-covering set, so we cannot expect to interestingly bound the Lipschitz dimension based only on the self-covering property. Our additional assumption is that the self-covering set does not contain any non-trivial line segments in some fixed direction.

\begin{theorem}\label{theorem:ss}
Let $K\subseteq\RR^n$ be compact and self-covering, according to Definition \ref{def:sc}. Assume that there exists $v\in\mathbb{S}^{n-1}$ such that $K$ contains no non-trivial line segment in direction $v$. Then the Lipschitz dimension of $K$ is at most $n-1$.
\end{theorem}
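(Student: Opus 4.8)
The plan is to find a single Lipschitz map $\pi\colon K \to \RR^{n-1}$ and prove it is Lipschitz light by means of the weak-tangent criterion, Theorem~\ref{prop:LLGH}. The natural choice is the orthogonal linear projection $\pi$ onto the hyperplane $v^\perp$, which is $1$-Lipschitz; the constant-direction assumption is designed precisely so that $\pi$ kills exactly the forbidden direction. Concretely, I would suppose for contradiction that $\pi|_K$ is not Lipschitz light. Then by Theorem~\ref{prop:LLGH} there is a weak tangent
$$ ((\hat K, 0), (\RR^{n-1}, 0), \hat\pi) \in \WTan(\pi|_K) $$
on which $\hat\pi$ fails to be light, meaning $\hat\pi$ collapses a non-trivial connected set $C \subseteq \hat K$ to a point.

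The next step is to realize this weak tangent concretely inside $\RR^n$, exactly as in the proof of Theorem~\ref{thm:snowflake}. Using Lemma~\ref{lem:canonicaltangent}, the package above may be viewed as a limit of rescalings $\lambda_j(K - x_j)$ inside $\RR^n$, so there is an isometry $\iota$ carrying $\hat K$ onto a closed set $\hat E \subseteq \RR^n$ with $\iota(0) = 0$, and since the linear projection $\pi$ is unaffected by the translations and dilations, $\hat\pi = \pi \circ \iota$. Thus we obtain $\hat E \subseteq \RR^n$ with $(\hat E, 0) \in \WTan(K)$, on which the linear projection $\pi$ is constant along a non-trivial connected subset $\hat C \subseteq \hat E$. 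Being connected and non-trivial, $\hat C$ contains two distinct points $a, b$ with $\pi(a) = \pi(b)$, i.e. $a - b$ is parallel to $v$; and in fact, since $\pi|_{\hat C}$ is constant, all of $\hat C$ lies in a single line $\ell$ in the direction $v$.

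The crux is then to transfer the presence of this segment-in-direction-$v$ back to a contradiction with the hypothesis on $K$. The self-covering property is what makes the weak tangent inherit the local structure of $K$: every ball in a rescaling $\lambda_j(K - x_j)$ is covered by at most $N$ rescaled translates of $K$ of controlled size, and rescaled translates preserve the direction $v$ (no rotations are allowed, which is exactly why the "no rotations" clause in Definition~\ref{def:sc} matters). Passing to the limit, I expect to show that $\hat E$ near any of its points is covered by boundedly many rescaled translates of $K$, so that the connected set $\hat C$ — which is a genuine segment of the line $\ell$ in direction $v$ — must be covered by finitely many rescaled translates of $K$. By a pigeonhole/Baire argument one of these translates $sK + w$ then contains a sub-segment of $\ell$ of positive length; but then $K$ itself contains the line segment $s^{-1}(\text{that sub-segment} - w)$, which is again a non-trivial segment in direction $v$ (rescaling and translation preserve direction). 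This contradicts the hypothesis that $K$ contains no non-trivial line segment in direction $v$.

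The main obstacle I anticipate is the rigorous passage in the previous paragraph: making precise that the self-covering structure survives in the weak tangent $\hat E$ and that the covering translates of $K$ still carry direction $v$ after taking a Gromov--Hausdorff limit. One must track the scaling factors $s_j$ of the covering pieces (they could degenerate to $0$ or $\infty$) and argue that at least one piece captures a macroscopic portion of the segment $\hat C$; the fact that $\diam K_i \leq C r$ bounds the pieces from above, while the requirement that $\overline B(x,r) \cap K$ is covered forces at least one piece to have diameter comparable to $r$, controlling the scales from below. Once this compactness bookkeeping is in place, extracting a non-trivial segment of $\ell$ inside a single rescaled translate of $K$, and hence inside $K$, is the clean contradiction. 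Finally, the lower bound is not asserted — the theorem only claims $\dim_L(K) \le n-1$ — so no matching argument is needed.
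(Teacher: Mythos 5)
Your proposal is correct and takes essentially the same approach as the paper: project onto $v^\perp$, blow up a failure of Lipschitz lightness to obtain a non-trivial connected set in a weak tangent of $K$ (realized inside $\RR^n$ as a limit of rescaled translates, so no rotations occur) on which $\pi$ is constant, transfer the self-covering structure to this limit by passing to a subsequence along which the scale factors and translation vectors of the covering pieces converge, and finish with the Baire category argument. The only differences are cosmetic: the paper produces the collapsed connected set directly from the witnessing $r_j$-paths via Lemma~\ref{lem:pathconvergence} rather than through Theorem~\ref{prop:LLGH} and Lemma~\ref{lem:canonicaltangent}, and your anticipated difficulty about bounding the covering scales from below is unnecessary, since pieces whose scales degenerate to zero limit onto single points, which can contain no non-trivial segment and are automatically discarded by the Baire argument.
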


\begin{proof}[Proof of Theorem \ref{theorem:ss}]
Let $K$ and $v$ be as in the theorem. Assume without loss of generality that $\diam(K) = 1$ and $0\in K$.

Let $\pi$ denote the orthogonal projection from $\RR^n$ onto the orthogonal complement $V$ of $v$; of course, $V$ is isometric to $\RR^{n-1}$. 

We claim that $\pi|_K$ is Lipschitz light, which will suffice to prove the theorem. The spirit of this argument is similar to some above that use Gromov-Hausdorff convergence. However, in this setting we need to be a bit careful \textit{not} to identify isometric sets, as we want to avoid rotation.

Suppose that $\pi|_K$ is not Lipschitz light. Then for each $j\in\mathbb{N}$ there is a set $W_j\subseteq V$ of diameter at most $r_j$ such that $\pi^{-1}(W_j)$ contains an $r_j$-path $P_j$ with $R_j:=\diam(P_j) \geq jr_j$.

Let $B_j=\overline{B}(x_j,R_j)\cap K$ be a ball in $K$ of radius $R_j$ containing $P_j$, where $x_j$ is the initial point of $P_j$. By Definition \ref{def:sc}, there are rescaled translates $K_j^1,\dots, K_j^{N_j}$ of diameter at most $CR_j$ covering $B_j$, with $N_j\leq N$. Note that we may freely assume that each $K_j^i$ actually intersects $B_j$, and therefore is contained in $\overline{B}(x_j, (C+1)R_j)$.

By passing to a subsequence, we may also assume that that there is $M\in \{1,2,\dots, N\}$ such that $N_j=M$ for all $j\in\mathbb{N}$.

For each $i\in\{1,\dots,M\}$, consider the sequence of sets
$$ \frac{1}{R_j}(K_j^i - x_j).$$
This is a sequence of rescaled translates of $K$, all contained in $\overline{B}(0,C+1)$. Thus, we may pass to a subsequence (which we continue to label by $j$) such that for each $i$, this sequence converges in the Hausdorff sense (equivalently, in the sense of Definition \ref{sets}) to a set $K^i_\infty$ that is a rescaled translate of $K$. Indeed, each set $\frac{1}{R_j}(K_j^i - x_j)$ is simply $s^i_j K^i_j + v^i_j$ for some $s^i_j\in[0, 2(C+1)]$ and $v^i_j\in \overline{B}(0,C+1)$, so we may simply pass to a subsequence along which $\{s^i_j\}_{j=1}^\infty$ and $\{v^i_j\}_{j=1}^\infty$ both converge.

In particular, our assumption on $K$ implies that no $K^i_\infty$ can contain a non-trivial line segment in direction $v$.

Let $K_\infty = \cup_{i=1}^M K^i_\infty$. 

By passing to a further subsequence, we may assume also that the sets
$$ \frac{1}{R_j}(P_j - x_j)$$
converge to a compact subset $P_\infty\subseteq \RR^n$. We also claim that $P_\infty\subseteq K_\infty$: If $y\in P_\infty$, then $y=\lim y_j$ for some $y_j \in \frac{1}{R_j}(P_j - y_j)$. Each such $y_j$ is in some $\frac{1}{R_j}(K_j^i - x_j)$. Therefore there is some $i_0\in\{1,\dots,M\}$ such that $y_j \in \frac{1}{R_j}(K_j^{i_0} - x_j)$ for infinitely many $j$, from which it follows that $y\in K^i_\infty\subseteq K_\infty$.

By Lemma \ref{lem:pathconvergence}, $P_\infty$ is a connected set, and it has diameter $1$. Moreover,
$$ \diam(\pi(P_\infty)) = \lim_{j\rightarrow\infty} \diam\left( \pi\left(\frac{1}{R_j}(P_j - x_j)\right) \right) \leq \lim_{j\rightarrow \infty} \frac{1}{j} =0.$$

Thus, $P_\infty \subseteq K_\infty$ is a non-trivial line segment in direction $v$.

To conclude the proof, we now argue that in fact some $K^i_\infty$ must contain a non-trivial sub-segment of $P_\infty$. Indeed, if not, then $K^i_\infty \cap P_\infty$ has empty interior in $P_\infty$ for each $j$. However, by the Baire Category Theorem, $P_\infty$ cannot be the union of a finite collection of subsets with empty interior. Thus, some $K^i_\infty \cap P_\infty$ must contain a non-trivial sub-segment of $P_\infty$. Since $K^i_\infty$ is a rescaled translate of $K$, this is a contradiction.
\end{proof}

\begin{corollary}
For each odd $p\in\mathbb{N}$, the Sierpi\'nski carpets $S_p$ have Lipschitz dimension $1$. The same holds for the Sierpi\'nski gasket $G$.
\end{corollary}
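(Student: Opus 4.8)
The plan is to obtain the upper bound $\dim_L \leq 1$ from Theorem \ref{theorem:ss} (applied in the plane, so $n=2$ and $n-1=1$) and the lower bound $\dim_L \geq 1$ from Observation \ref{obs:liptop}, since $S_p$ and $G$ are compact. To invoke Theorem \ref{theorem:ss} I must verify two hypotheses for each set $K \in \{S_p, G\}$: that $K$ is self-covering in the sense of Definition \ref{def:sc}, and that there is a direction $v \in \mathbb{S}^1$ in which $K$ contains no non-trivial line segment. The self-covering property is immediate: given $x \in S_p$ and $r>0$, choose $n$ with $p^{-n} \leq r < p^{-(n+1)}$; then $\obar{B}(x,r)$ meets a bounded number of generation-$n$ subsquares, and inside each such subsquare $S_p$ is \emph{exactly} a rescaled translate of $S_p$ (no rotation) of diameter $\lesssim r$, which is precisely the cover Definition \ref{def:sc} demands. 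The gasket is identical, with generation-$n$ subtriangles of side $2^{-n}$ in place of subsquares.

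The verification of the no-segment direction is the key step and the main obstacle. The subtlety is that both fractals \emph{do} contain segments in certain directions: the bottom edge of the unit square lies in $S_p$ (the points $(x,0)$ have no base-$p$ digit equal to the central digit $m := (p-1)/2$ in the $y$-coordinate, so none is removed), and similarly the three sides of the defining triangle lie in $G$. So $v$ must be chosen to avoid these. I would take $v$ of irrational slope, say $s = 1/\sqrt{2}$, and prove that $S_p$ contains no non-trivial segment of irrational slope.

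The argument proceeds in two stages. First, a \emph{boosting} reduction using self-similarity: if $\gamma \subseteq S_p$ has positive horizontal width $w$, choose $m$ with $p^{-m} < w/3$; then the $x$-projection of $\gamma$ contains at least one full generation-$m$ column-interval $[ip^{-m},(i+1)p^{-m}]$ in its interior, over which $\gamma$ spans the full width. Since $S_p$ restricted to that column is a rescaled translate of $S_p$, rescaling by $p^{m}$ produces a slope-$s$ segment of $S_p$ that is the graph $y = sx + c$ over all of $x \in [0,1]$ (with $y$-values automatically in $[0,1]$). Second, an \emph{equidistribution} argument: at generation $n$, in column $i$ the segment lies in row $j_i = \lfloor s i + p^n c \rfloor$, and membership in $S_p^n$ forbids having both $i \equiv m \pmod p$ and $j_i \equiv m \pmod p$. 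Restricting $i$ to the progression $i \equiv m \pmod p$ and using that $s$ is irrational, the residue $\lfloor s i + p^n c\rfloor \bmod p$ equidistributes in $\{0,\dots,p-1\}$; hence for $n$ large some admissible $i$ forces $j_i \equiv m \pmod p$, meaning $\gamma$ meets a removed square, a contradiction. The hard part is exactly this last step: one must confirm that the residue genuinely attains the value $m$ along the arithmetic progression for a suitable generation $n$ (where irrationality of $s$ is essential), and check the boosting reduction is legitimate. The gasket case is handled by the same scheme, choosing $v$ not parallel to any of the three sides and using the three similarities of ratio $1/2$ defining $G$; equidistribution again forces any such segment into the removed central triangle at some generation.

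For the lower bound, each of $S_p$ and $G$ contains a non-trivial segment (e.g. the bottom edge), which has topological dimension $1$; by monotonicity of topological dimension under subsets, $\dim_T(S_p), \dim_T(G) \geq 1$, so Observation \ref{obs:liptop} gives $\dim_L \geq 1$. Combined with $\dim_L \leq 1$ from Theorem \ref{theorem:ss}, both sets have Lipschitz dimension exactly $1$.
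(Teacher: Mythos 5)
Your overall skeleton coincides with the paper's proof: verify Definition \ref{def:sc}, exhibit a segment-free direction, apply Theorem \ref{theorem:ss} (with $n=2$) for the upper bound, and get the lower bound from Observation \ref{obs:liptop} because both sets contain non-trivial segments. Your verification of self-covering and of the lower bound are fine (apart from the inverted inequality ``$p^{-n}\leq r<p^{-(n+1)}$,'' which should read $p^{-(n+1)}\leq r<p^{-n}$). Where you diverge is that the paper does not prove the no-segment facts at all: it cites \cite[Corollary 4.5]{DurandTyson} or \cite[Theorem 3.4]{ChenNiemeyer} for the statement that $S_p$ contains no non-trivial segment of irrational slope, and for the gasket it uses the vertical direction, in which the absence of segments is elementary. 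Your decision to prove these facts from scratch is legitimate in principle, but the sketch as written has genuine gaps.

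Concretely: (1) In the boosting step, the claim that ``$S_p$ restricted to that column is a rescaled translate of $S_p$'' is false; it is the retained generation-$m$ \emph{squares}, not the vertical columns $[ip^{-m},(i+1)p^{-m}]\times[0,1]$, that carry rescaled copies of $S_p$. Consequently a segment spanning a full column interval need not rescale to a graph over all of $[0,1]$: it can cross a horizontal generation-$m$ gridline inside that column and be split between two cells, spanning only part of the width of each. This is repairable --- since $|s|<1$, for $m$ large the segment spans more full columns than the number of horizontal gridlines it crosses, so some column contains no crossing --- but that requires $m$ large depending on $1-|s|$, not merely $p^{-m}<w/3$. (2) In the equidistribution step, besides the uniformity issue you flag (the shift $p^n c$ depends on $n$, so one needs discrepancy bounds for the irrational rotation that are uniform in the shift), there is a boundary issue: the removed squares are open and $S_p$ contains their boundaries, so touching a removed square is not a contradiction; you must place a point of the segment in the \emph{interior} of a square with both indices $\equiv m \pmod p$, i.e., land the relevant quantity in the open interval $(m,m+1)$ modulo $p$ and perturb off the left edge of the column. (3) For the gasket, your scheme does not transfer: $G$ has no product grid structure, so there are no ``columns'' and ``rows'' whose residues could equidistribute, and the assertion that ``equidistribution forces the segment into the removed central triangle'' has no precise meaning as stated. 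The paper's choice of the vertical direction for $G$, where a short blow-up/maximal-segment argument (or the cited literature) settles the matter, is the easy route; a generic non-side direction, as you propose, demands a genuinely different and unsupplied argument.
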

\begin{proof}
The Sierpi\'nski carpets $S_p$ and Sierpi\'nski gasket $G$ are easily seen to satisfy Definition \ref{def:sc}. Moreover, $S_p$ contains no non-trivial line segments in directions of irrational slope (see \cite[Corollary 4.5]{DurandTyson} or \cite[Theorem 3.4]{ChenNiemeyer}), and the gasket $G$ clearly contains no nontrivial vertical line segments. Thus, these fractals all satisfy the conditions of Theorem \ref{theorem:ss} and so have Lipschitz dimension at most $1$. Since each contains some non-trivial line segments, their Lipschitz dimensions must be equal to $1$.
\end{proof}

\section{Relationship to other dimensions}\label{sec:relation}
We have already seen in Observation \ref{obs:liptop} that the Lipschitz dimension of $\sigma$-compact metric spaces is bounded below by topological dimension. Here we briefly indicate the relation (or lack thereof) between Lipschitz dimension and the Nagata, Hausdorff, and Assouad dimensions. These three dimensions were defined in subsection \ref{subsec:dimensions}.

\subsection{Nagata dimension}
As we recalled in subsection \ref{subsec:dimensions}, $\dim_N(X) \geq \dim_T(X)$ for every metric space $X$. We show that Lipschitz dimension provides an upper bound for Nagata dimension.

\begin{lemma}
If $f:X\rightarrow Y$ is Lipschitz light, then $\dim_N (X) \leq \dim_N (Y)$.
\end{lemma}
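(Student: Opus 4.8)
The goal is to show that a Lipschitz light map $f\colon X\to Y$ cannot increase Nagata dimension, i.e.\ $\dim_N(X)\le\dim_N(Y)$. The natural strategy is to \emph{pull back} a good covering of $Y$ (one witnessing its Nagata dimension) to a good covering of $X$, using the Lipschitz light property to control the geometry of the pulled-back sets. Write $n=\dim_N(Y)$ (we may assume this is finite, else there is nothing to prove), and let $C$ be the Lipschitz light constant of $f$, so that $f$ is $C$-Lipschitz and $r$-components of $f^{-1}(W)$ have diameter at most $Cr$ whenever $\diam(W)\le r$. By Definition~\ref{def:nagata}, there is a constant $c>0$ such that for every $s>0$, $Y$ admits a $cs$-bounded covering $\{B_i\}$ with $s$-multiplicity at most $n+1$.

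The main idea is that a single set $B_i$ of diameter $\le cs$ has preimage $f^{-1}(B_i)$ which may be large, but its $t$-components (for a suitable scale $t$ comparable to $s$) have controlled diameter by Lipschitz lightness. So first I would fix a target scale $s'>0$ for the covering of $X$ that I want to build, and choose the scale $s$ in $Y$ proportional to $s'$ (with the proportionality constant depending on $c$ and $C$, to be calibrated at the end). Then I would take the pulled-back cover $\{f^{-1}(B_i)\}$ and \emph{refine each $f^{-1}(B_i)$ into its $t$-components}, for an appropriate choice of $t$ (on the order of $s'$). Lipschitz lightness guarantees each such component has diameter at most $Ct\le \text{const}\cdot s'$, giving the required boundedness of the new covering of $X$.

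The heart of the argument — and the step I expect to be the main obstacle — is bounding the $s'$-multiplicity of this refined covering of $X$ by $n+1$. Here one must show that if a set $E\subseteq X$ with $\diam(E)\le s'$ meets the $t$-components coming from the preimages $f^{-1}(B_{i_1}),\dots,f^{-1}(B_{i_k})$, then $k\le n+1$. The key observation is that $f(E)$ has diameter at most $Cs'\le cs$ (after calibrating constants), hence is a set in $Y$ of diameter $\le s$, so by the $s$-multiplicity bound it meets at most $n+1$ of the original sets $B_i$; thus $E$ can meet preimages of at most $n+1$ distinct $B_i$. The subtle point is that $E$ might meet \emph{several} distinct $t$-components arising from the \emph{same} $f^{-1}(B_i)$, which would inflate the multiplicity. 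To rule this out, I would choose $t$ so that $t\ge s'$ (say $t$ slightly larger than $s'$): then any set of diameter $\le s'$ lies within a single $t$-component of whatever preimage it meets, since a set of diameter $\le s' \le t$ is trivially $t$-connected. This forces $E$ to meet at most one $t$-component per $f^{-1}(B_i)$, so the multiplicity of the refined cover is bounded by the number of distinct $B_i$ that $f(E)$ meets, namely $n+1$.

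Finally I would collect the constraints: I need $t\ge s'$ to control multiplicity, $\diam(f(E))\le cs$ to invoke the $s$-multiplicity bound (which, since $\diam(f(E))\le Cs'$, requires $Cs'\le cs$, i.e.\ $s$ chosen $\ge (C/c)s'$), and $\diam(B_i)\le cs$ together with $t\approx s'$ to apply Lipschitz lightness so that the $t$-components of $f^{-1}(B_i)$ have diameter $\le Ct$. These are all simultaneously satisfiable by taking, e.g., $t=s'$ and $s=(C/c)s'$, which makes the new covering $C's'$-bounded for a constant $C'$ depending only on $C$ and $c$, and of $s'$-multiplicity at most $n+1$. Since $s'>0$ was arbitrary, this exhibits the covering data required by Definition~\ref{def:nagata} and yields $\dim_N(X)\le n=\dim_N(Y)$.
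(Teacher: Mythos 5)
Your strategy is the same as the paper's: pull back a covering of $Y$ witnessing $\dim_N(Y)=n$, refine each preimage $f^{-1}(B_i)$ into its components at a scale at least the target scale (so that a set of small diameter meets at most one component per preimage), bound the diameters of these components by Lipschitz lightness, and inherit the multiplicity bound from $Y$. The paper's proof has exactly this skeleton; it streamlines the constants by normalizing $f$ to be $1$-Lipschitz and taking $cs$-components.

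However, your constant calibration contains a concrete error in the multiplicity step. The $s$-multiplicity bound of Definition~\ref{def:nagata} applies to subsets of $Y$ of diameter at most $s$, not at most $cs$; the quantity $cs$ only bounds the diameters of the covering sets $B_i$ themselves. So to conclude that $f(E)$ meets at most $n+1$ of the $B_i$ you need $\diam f(E)\le s$, i.e.\ $Cs'\le s$, whereas you required only $Cs'\le cs$ and accordingly chose $s=(C/c)s'$. With that choice $\diam f(E)$ can be as large as $cs$, which exceeds $s$ whenever $c>1$ --- and one may only assume $c\ge 1$ without loss of generality (enlarging $c$ weakens the definition, shrinking it strengthens it), so the step fails as written. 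The repair is immediate: take $s=Cs'$ instead, so $\diam f(E)\le s$ and the multiplicity bound applies. A related slip: your diameter bound ``$Ct$'' for the $t$-components of $f^{-1}(B_i)$ is only justified by the definition of Lipschitz light when $\diam(B_i)\le t$, which fails here since $\diam(B_i)$ can be of order $cCs'$ while $t=s'$. Instead, note that the $s'$-components are contained in the $cCs'$-components, and apply lightness at scale $r=cCs'$ to get diameter at most $cC^2s'$; this is harmless, since the definition of Nagata dimension only asks for some uniform constant. With these two fixes your argument is correct and coincides with the paper's proof, which sidesteps both issues at once by rescaling so that $f$ is $1$-Lipschitz, making the scale in $Y$ and the multiplicity scale in $X$ literally equal.
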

\begin{proof}
Without loss of generality, we may assume that $f$ is $1$-Lipschitz and that $\dim_N (Y) = n < \infty$. Fix $s$ and consider a $cs$-bounded cover $\{B_i\}$ of $Y$ with $s$-multiplicity at most $n+1$. We may also assume without loss of generality that $c\geq 1$. 

For each $i$, let $\{U^i_j\}$ denote the $cs$-components of $f^{-1}(B_i)$. Then, because $f$ is Lipschitz light with some constant $C\geq 1$, we have
$$ \diam(U^i_j) \leq Ccs $$
for all $i,j$.

We claim that $\{U^i_j\}_{i,j}$ form a cover of $X$ with $s$-multiplicity at most $n+1$. Consider any set $E\subset X$ with $\diam(E)\leq s$. First of all, note that for each fixed $i$, $E$ can meet $U^i_j$ for at most one value of $j$. Indeed, if $E$ met both $U^i_j$ and $U^i_{k}$, then there would be $x\in U^i_j$ and $y\in U^i_k$ with $d(x,y)\leq s \leq cs$, in which case $U^i_j$ and $U^i_k$ would be the same $cs$-component, i.e., we would have $j=k$.

So we must show that $E$ meets some $U^i_j$ for at most $n+1$ values of $i$. This is the same as saying that $f(E)$ meets $B_i$ for at most $n+1$ values of $i$. This is in fact the case, because $\diam f(E) \leq s$, as $f$ is $1$-Lipschitz, and because $\{B_i\}$ has $s$-multiplicity at most $n+1$. This completes the proof.
\end{proof}

\begin{corollary}\label{cor:naglipdim}
For any metric space $X$, $\dim_N X \leq \dim_L X$.
\end{corollary}
\begin{proof}
This follows immediately from the previous lemma and the fact that the Nagata dimension of $\mathbb{R}^n$ is $n$.
\end{proof}

On the other hand, Nagata dimension provides no non-trivial upper bound for Lipschitz dimension, as demonstrated by Theorem \ref{thm:Carnot} and \cite[Theorem 4.2]{LDR}, which together say that Carnot groups have infinite Lipschitz dimension and finite Nagata dimension.

Nagata dimension and Lipschitz dimension do agree for $0$-dimensional spaces:

\begin{proposition}\label{prop:zerodim}
A metric space $X$ has Lipschitz dimension $0$ if and only if it has Nagata dimension $0$.
\end{proposition}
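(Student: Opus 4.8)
The plan is to first unwind what $\dim_L(X)=0$ actually asserts, and then observe that it matches the $n=0$ case of Nagata dimension almost verbatim once one recognizes that both conditions are really statements about the $r$-component equivalence relation. Recall that $\dim_L(X)\le 0$ means there is a Lipschitz light map $f\colon X\to\RR^0$. Since $\RR^0$ is a single point, $f$ is the unique constant map, the Lipschitz condition is vacuous, and the only nonempty $W\subseteq\RR^0$ is $\RR^0$ itself (with $\diam(W)=0\le r$), giving $f^{-1}(W)=X$. Thus Definition \ref{def:LL2} reduces to the clean statement: there is a constant $C\ge 1$ such that
$$ \text{for every } r>0, \text{ every } r\text{-component of } X \text{ has diameter } \le Cr. $$
So the whole proposition is the assertion that this ``bounded $r$-components'' property is equivalent to $\dim_N(X)=0$.

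For the direction $\dim_N(X)=0\Rightarrow\dim_L(X)=0$, I would fix $r>0$ and apply the Nagata hypothesis with $s=r$ to obtain a $cr$-bounded cover $\{B_i\}$ with $r$-multiplicity $\le 1$. The first thing to note is that such a cover is automatically a disjoint partition: a singleton $\{x\}$ has diameter $0\le r$ and hence meets at most one member, so each point lies in exactly one $B_i$. Now take any $r$-path $(x_1,\dots,x_k)$ in $X$. For each consecutive pair, the set $\{x_i,x_{i+1}\}$ has diameter $d(x_i,x_{i+1})\le r$, so it meets only one member; since both points are covered, they lie in the same $B_i$. Chaining along the path shows the entire $r$-path lies in a single member, and therefore every $r$-component is contained in one $B_i$ and has diameter $\le cr$. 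This gives the bounded-$r$-component property with $C=c$.

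For the converse $\dim_L(X)=0\Rightarrow\dim_N(X)=0$, I would use the $s$-components themselves as the covering. Assuming $C\ge 1$ is the Lipschitz light constant, for each $s>0$ the family of $s$-components partitions $X$, and by the bounded-$r$-component property at scale $r=s$ each such component has diameter $\le Cs$, so the family is $Cs$-bounded. For the multiplicity, any set $E$ with $\diam(E)\le s$ satisfies $d(x,y)\le s$ for all $x,y\in E$, so all points of $E$ are $s$-equivalent and $E$ meets exactly one $s$-component; thus the $s$-multiplicity is $\le 1$. Taking $c=C$ shows $\dim_N(X)\le 0$, hence $\dim_N(X)=0$.

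I do not expect a genuine technical obstacle here; the content is entirely in correctly interpreting the two definitions. The one point requiring a moment of care is the boundary behavior of the $s$-multiplicity condition, and in particular the observation that $s$-multiplicity $\le 1$ forces the covering to be a disjoint partition (via singletons), which is precisely what lets the $r$-component equivalence relation line up with the Nagata cover. Once that is in place, both implications are short chaining arguments.
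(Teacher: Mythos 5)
Your proof is correct and takes essentially the same approach as the paper. The direction $\dim_N(X)=0\Rightarrow\dim_L(X)=0$ is the paper's argument with the chaining along $r$-paths spelled out explicitly, and your converse direction (covering $X$ by its $s$-components) is a self-contained rendering of what the paper gets by citing its general inequality $\dim_N(X)\le\dim_L(X)$ (Corollary \ref{cor:naglipdim}), whose underlying lemma proves exactly this covering-by-components statement, specialized here to the constant map to $\RR^0$.
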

\begin{proof}
By Corollary \ref{cor:naglipdim}, we always have
$$ \dim_L(X) \geq \dim_N(X),$$
and so if $\dim_L(X)=0$ then $\dim_N(X)=0$.

Conversely, suppose the Nagata dimension of $X$ is zero. That means that, for every $s>0$, there is a $cs$-bounded cover of $X$ with $s$-multiplicity at most $1$.

Let $f\colon X\rightarrow \RR^0$ be the constant map. We claim that $f$ is Lipschitz light. This just means that for every $s>0$, the $s$-components of $X$ have diameter at most $cs$. Consider the cover of $X$ given by the Nagata dimension in the previous paragraph. Any $s$-component of $X$ must be contained in a single set in the cover, so has diameter at most $cs$. So $f$ is Lipschitz light. 
\end{proof}

Interestingly, the author does not know an example of a space with Nagata dimension $1$ and Lipschitz dimension greater than $1$.

\begin{question}\label{q:nagata1}
Is there a compact metric space with Nagata dimension $1$ and Lipschitz dimension greater than $1$?
\end{question}
This question is interesting in light of Theorems \ref{theorem:CKgraph} and \ref{theorem:CKL1}.

\subsection{Hausdorff and Assouad dimension}\label{subsec:HausdorffAssouad}
There is in general no relationship between the Lipschitz dimension and the Hausdorff or Assouad dimension of a space. The following two propositions indicate this.

Building on a construction of Laakso \cite{La00}, Cheeger and Kleiner \cite{CK13_PI} give a very flexible construction of metric spaces with Lipschitz dimension $1$, including examples with arbitrary Hausdorff and Assouad dimensions. 
\begin{proposition}[\cite{CK13_PI}]\label{prop:Laakso}
For every $\alpha \geq 1$, there is a compact metric space of Lipschitz dimension $1$ and Hausdorff and Assouad dimension equal to $\alpha$.
\end{proposition}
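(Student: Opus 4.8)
The plan is to exhibit these spaces explicitly as inverse limits of metric graphs, so that Theorem \ref{theorem:CKgraph} furnishes the bound $\dim_L \leq 1$ essentially for free, and then to arrange the combinatorics of the construction so that the resulting space is Ahlfors $\alpha$-regular, which simultaneously pins down both its Hausdorff and its Assouad dimension. The case $\alpha = 1$ is already handled by $[0,1]$, so I would concentrate on $\alpha > 1$.

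First I would set up the Laakso--Cheeger--Kleiner construction. Fix an integer $m \geq 2$ and build an admissible inverse system
$$ \cdots \leftarrow X_{-1} \leftarrow X_0 \leftarrow X_1 \leftarrow \cdots $$
of finite metric graphs in which each $X_{i+1}$ refines $X_i$ by subdividing every edge into $m$ edges, as admissibility demands, but where in passing from $X_i$ to $X_{i+1}$ one additionally introduces a controlled number of ``wormhole'' vertex identifications at prescribed subdivision points. The number of such identifications per edge, together with $m$, is a free parameter; choosing it so that the number of edges of $X_i$ grows like $m^{i\alpha}$ while the natural length scale on $X_i$ is $m^{-i}$ will produce a space of the desired dimension. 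Endowing each $X_i$ with the path metric and forming the inverse limit $X_\infty$ with the metric $\overline{d}_\infty$ described before Theorem \ref{theorem:CKgraph} yields a compact, connected metric space. Since $X_\infty$ is by construction the inverse limit of an admissible system, Theorem \ref{theorem:CKgraph} gives $\dim_L(X_\infty) \leq 1$.

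Next I would compute the Hausdorff and Assouad dimensions. Pushing forward the normalized counting measures on the graphs $X_i$ produces a Radon measure $\mu$ on $X_\infty$, and the self-similarity of the construction makes $\mu$ Ahlfors $\alpha$-regular, that is, $\mu(B(x,r)) \asymp r^\alpha$ for $0 < r \leq \diam(X_\infty)$. Ahlfors $\alpha$-regularity forces $\dim_H(X_\infty) = \alpha$ and $\dim_A(X_\infty) \leq \alpha$ (see \cite[Sections 8.3 and 10.13]{He}), and combined with the general inequality $\dim_H \leq \dim_A$ of \eqref{eq:dimensionrelations2} this yields $\dim_A(X_\infty) = \alpha$ as well. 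Finally, since $X_\infty$ is connected and contains rectifiable arcs, its topological dimension is at least $1$, so Observation \ref{obs:liptop} (applicable since $X_\infty$ is compact) upgrades the earlier bound to $\dim_L(X_\infty) = 1$.

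The main obstacle is the construction itself: one must choose the wormhole identifications so that the inverse system is genuinely admissible in the precise sense of the definition preceding Theorem \ref{theorem:CKgraph} (direction-preserving simplicial projections that are isomorphisms on each edge, together with the connectivity condition (iii)), while simultaneously producing exactly Ahlfors $\alpha$-regularity for the prescribed exponent. Verifying $\mu(B(x,r)) \asymp r^\alpha$ requires understanding $\overline{d}_\infty$ well enough to relate balls in $X_\infty$ to stars of vertices in the approximating graphs $X_i$, and controlling how the identifications create new ``mass'' without collapsing the graph structure or disturbing the essentially one-dimensional combinatorics that Theorem \ref{theorem:CKgraph} exploits. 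These verifications are carried out in \cite{La00} and \cite{CK13_PI}, to which I would ultimately defer for the technical details.
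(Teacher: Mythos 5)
Your proposal is correct and takes essentially the same route as the paper: the paper gives no independent proof of this proposition, simply citing the Laakso/Cheeger--Kleiner inverse-limit construction of \cite{CK13_PI} (building on \cite{La00}), which is exactly the construction you outline, with $\dim_L \leq 1$ coming from Theorem \ref{theorem:CKgraph} and the dimension computations from Ahlfors $\alpha$-regularity. Your additional observations (Observation \ref{obs:liptop} for the lower bound $\dim_L \geq 1$, and Ahlfors regularity simultaneously pinning down the Hausdorff and Assouad dimensions) are accurate and consistent with how those cited references proceed.
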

This shows that the Hausdorff and Assouad dimensions can be larger than Lipschitz dimension by any desired amount.

The reverse situation can also happen: the Lipschitz dimension may be any amount larger than the Hausdorff and/or Assouad dimensions. Indeed, as noted in subsection \ref{subsub:carnotbackground}, Carnot groups have finite Hausdorff and Assouad dimensions, but have infinite Lipschitz dimension by Theorem \ref{thm:Carnot}.

\section{Cheeger's analytic dimension}\label{sec:Cheeger}
In this section, we describe Cheeger's version of Rademacher's theorem on certain non-smooth metric measure spaces, which equips them with a type of ``measurable cotangent bundle'', and we show that Lipschitz dimension bounds the dimension of this cotangent bundle.

\subsection{Cheeger's differentiation theory and Lipschitz quotient mappings}

Recall that Rademacher's theorem says that a Lipschitz mapping from $\RR^n$ to $\RR$ is differentiable almost everywhere, with respect to Lebesgue measure. In \cite{Ch99}, Cheeger gave a far-reaching generalization of this result to a large class of non-smooth metric measure spaces. In order to do so, he defined a very general notion of differentiable structure on a metric measure space. (The name ``Lipschitz differentiability space'' used below for this notion was coined by Bate \cite{Bate}.)

\begin{definition}[\cite{Ch99} ]\label{def:LDspace}
A metric measure space $(X,\mu)$ is called a \textit{Lipschitz differentiability space} if it satisfies the following condition. There are countably many Borel sets (``charts'') $U_i$ of positive measure covering $X$, positive integers $n_i$ (the ``dimensions of the charts''), and Lipschitz maps $\phi_i\colon X\rightarrow\mathbb{R}^{n_i}$ with respect to which any Lipschitz function $f \colon X \rightarrow \RR$ is differentiable almost everywhere, in the sense that for each $i$ and for $\mu$-almost every $x\in U_i$, there exists a unique $df(x) \in\mathbb{R}^{n_i}$ such that
\begin{equation} \label{LD}
\lim_{y\rightarrow x} \frac{|f(y) - f(x) -  df(x) \cdot(\phi_i(y)-\phi_i(x))|}{d(x,y)} = 0.
\end{equation}
Here $df(x) \cdot(\phi_i(y)-\phi_i(x))$ denotes the standard scalar product in $\mathbb{R}^{n_i}$.
\end{definition}

Although the choice of charts $(U_i, \phi_i)$ is by no means uniquely determined, the numbers $n_i$ are, in the sense that if $(U, \phi)$ and $(V,\psi)$ are charts and $\mu(U \cap V)>0$, then their associated dimensions must be the same. Thus, the numbers $n_i$ reflect something about the geometry of the space $(X,d,\mu)$, which motivates the following, chart-independent, definition:
\begin{definition}\label{def:analyticdim}
If $(X,d,\mu)$ is a Lipschitz differentiability space, we call the supremum of the numbers $n_i$  from Definition \ref{def:LDspace} the \textit{analytic dimension} of $X$, and denote it $\dim_C(X)$.
\end{definition}

For a nice introduction to Cheeger's theory, we refer the reader to \cite{KM} and for more recent developments in the subject to \cite{Bate, Schioppa, EB}. For more specific results on the interaction between analytic dimension and metric geometry, which is an active area of research, we refer the reader to \cite{GCD15, BL, GCDK, KS}.

The main theorem of \cite{Ch99} is that all the so-called ``Poincar\'e inequality (PI) spaces'' are Lipschitz differentiability spaces. Examples of these include Euclidean spaces and Carnot groups \cite{He}, as well as a selection of more exotic examples appearing in \cite{BP, La00, CK13_PI, KS}. The full spectrum of possibilities seems to not yet be well-understood.

A key tool in the study of Lipschitz differentiability spaces has been the following notion.
\begin{definition}[\cite{BJLPS}]\label{LQdef}
Let $X$ and $Y$ be metric spaces and $f \colon X\rightarrow Y$ a mapping. We say that $f$ is a \textit{Lipschitz quotient mapping} if there are constants $C,c>0$ such that
\begin{equation}\label{LQdefeqn}
B(f(x),cr) \subseteq f(B(x,r)) \subseteq B(f(x),Cr)
\end{equation}
for all $x\in X$ and $r>0$.
\end{definition}
Note that the second inclusion in \eqref{LQdefeqn} simply says that the mapping is $C$-Lipschitz. Lipschitz quotient mappings were first defined and studied in \cite{BJLPS,JLPS}, where the following path-lifting property was observed. (For a proof in the generality below, see \cite[Lemma 3.3]{GCDK}.)

\begin{lemma}\label{lem:pathlift}
Let $X$ be a proper metric space and let $f\colon X\rightarrow Y$ be a Lipschitz quotient map. Let $\gamma\colon [0,T]\rightarrow Y$ be a $1$-Lipschitz curve with $\gamma(0)=f(x)$. Then there is a Lipschitz curve $\tilde{\gamma}\colon [0,T]\rightarrow X$ with $\tilde{\gamma}(0)=x$ such that $ f\circ \tilde{\gamma} = \gamma$.
\end{lemma}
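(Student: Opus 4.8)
The plan is to construct the lift $\tilde\gamma$ by a limiting procedure, building piecewise-approximate lifts on finer and finer partitions of $[0,T]$ and extracting a convergent subsequence using properness. First I would fix a small step size $h>0$ and partition $[0,T]$ into intervals of length $h$, giving nodes $0=t_0<t_1<\cdots<t_k=T$. I then build a discrete lift $x_0=x, x_1, x_2, \dots, x_k$ inductively as follows: given $x_j$ with $f(x_j)$ close to $\gamma(t_j)$, I use the first (surjectivity) inclusion in \eqref{LQdefeqn}, namely $B(f(x_j),cr)\subseteq f(B(x_j,r))$, to find $x_{j+1}\in B(x_j, c^{-1} d(\gamma(t_j),\gamma(t_{j+1})))$ whose image is exactly $\gamma(t_{j+1})$. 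Since $\gamma$ is $1$-Lipschitz, $d(\gamma(t_j),\gamma(t_{j+1}))\leq h$, so consecutive nodes move at most $c^{-1}h$ in $X$, and the polygonal-in-time interpolation $\tilde\gamma_h$ of these nodes is a curve whose speed is controlled by $c^{-1}$ uniformly in $h$, with $f(\tilde\gamma_h(t_j))=\gamma(t_j)$ at every node.

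The key analytic step is to pass to the limit as $h\to 0$. The family $\{\tilde\gamma_h\}$ is uniformly $c^{-1}$-Lipschitz (hence equicontinuous) and, starting from the fixed point $x$, stays within the closed ball $\overline B(x, c^{-1}T)$, which is compact because $X$ is proper. By the Arzel\`a--Ascoli theorem I then extract a subsequence $h\to 0$ along which $\tilde\gamma_h$ converges uniformly to a limit curve $\tilde\gamma\colon[0,T]\to X$. The limit $\tilde\gamma$ is automatically $c^{-1}$-Lipschitz, and $\tilde\gamma(0)=x$ since every $\tilde\gamma_h$ starts at $x$. To verify $f\circ\tilde\gamma=\gamma$, I would fix $t\in[0,T]$, choose for each $h$ a nearby node $t_{j(h)}$ with $|t-t_{j(h)}|\leq h$, and use that $f$ is $C$-Lipschitz (the second inclusion) together with $f(\tilde\gamma_h(t_{j(h)}))=\gamma(t_{j(h)})$ to estimate
\[
d(f(\tilde\gamma(t)),\gamma(t)) \leq d(f(\tilde\gamma(t)),f(\tilde\gamma_h(t))) + C\,d(\tilde\gamma_h(t),\tilde\gamma_h(t_{j(h)})) + d(\gamma(t_{j(h)}),\gamma(t)),
\]
each term of which tends to $0$ as $h\to 0$ (the first by uniform convergence and continuity of $f$, the second by the uniform Lipschitz bound and $|t-t_{j(h)}|\leq h$, the third by continuity of $\gamma$). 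Hence $f(\tilde\gamma(t))=\gamma(t)$ for all $t$.

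The main obstacle I anticipate is not the convergence argument itself but ensuring the discrete construction stays honest: the surjectivity inclusion $B(f(x_j),cr)\subseteq f(B(x_j,r))$ only guarantees a preimage inside a ball of a \emph{controlled} radius, so I must track that the accumulated displacement keeps all $x_j$ inside a single compact ball independent of $h$, which is exactly what the uniform speed bound $c^{-1}$ provides. A secondary subtlety is that the selection of $x_{j+1}$ is non-canonical (any valid choice works), so I avoid any claim of uniqueness and rely only on existence; this is harmless since the statement asserts only the existence of \emph{a} lift. I expect no difficulty from $\gamma$ being merely $1$-Lipschitz rather than smooth, as the construction uses only the metric data $d(\gamma(t_j),\gamma(t_{j+1}))$ and the path-lifting balls provided by the Lipschitz quotient property.
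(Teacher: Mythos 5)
The paper does not actually prove this lemma---it defers the proof to \cite[Lemma 3.3]{GCDK}---so your proposal can only be judged on its own terms. Your overall strategy (exact discrete lifts of $\gamma$ at the nodes of a fine partition, uniform speed bound $c^{-1}$, compactness from properness, then verification that the limit is a lift) is the standard one and is essentially sound. But there is one genuine gap: the ``polygonal-in-time interpolation'' of the nodes $x_0,x_1,\dots,x_k$ need not exist. The space $X$ is only assumed proper; it is not assumed geodesic, quasiconvex, or even locally path-connected, so there may be no curve at all joining $x_j$ to $x_{j+1}$, let alone one of length comparable to $d(x_j,x_{j+1})$. Assuming such interpolating curves exist essentially begs the question, since producing curves in $X$ is exactly what the lemma is for. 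Concretely: let $K\subseteq\RR$ be the Cantor set, $X=\RR\times K$ with the product metric, $Y=\RR$, and $f(t,k)=t$. This is a Lipschitz quotient map (with $c=C=1$) on a proper space, but your inductive step is free to choose nodes $x_j=(\gamma(t_j),k_j)$ with distinct $k_j$'s, and two such nodes lie in different path components of $X$, so no interpolating curve exists. Consequently Arzel\`a--Ascoli cannot be applied to the (nonexistent) curves $\tilde\gamma_h$, and the remainder of your argument, which relies on their uniform convergence, does not go through as written.

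The gap is repairable while keeping your structure: do not interpolate at all. By the triangle inequality your discrete lift is $c^{-1}$-Lipschitz on the \emph{whole} node set, not just on consecutive nodes (summing $d(x_l,x_{l+1})\leq c^{-1}(t_{l+1}-t_l)$ gives $d(x_i,x_j)\leq c^{-1}|t_i-t_j|$), and all nodes lie in the compact ball $\overline{B}(x,c^{-1}T)$. Taking $h_n=T2^{-n}$, so that the node sets are nested and their union is dense in $[0,T]$, a diagonal argument extracts a subsequence of the discrete lifts converging at every dyadic time. The limit map is $c^{-1}$-Lipschitz on the dyadics and satisfies $f(\tilde\gamma(t))=\gamma(t)$ there, hence extends uniquely to a $c^{-1}$-Lipschitz curve on $[0,T]$, which is a lift of $\gamma$ by continuity of $f$ and $\gamma$. (A separate, cosmetic point: the inclusion $B(f(x_j),cr)\subseteq f(B(x_j,r))$ involves open balls, so to obtain a preimage of $\gamma(t_{j+1})$ you should take $r$ slightly larger than $c^{-1}d(\gamma(t_j),\gamma(t_{j+1}))$, say $r=c^{-1}(h+h^2)$; this slack disappears in the limit.)
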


Lipschitz quotient maps enter the study of Lipschitz differentiability spaces through the following proposition. Independent proofs of this fact were given in \cite[Theorem 5.56]{Schioppa} and (in the doubling case) \cite[Corollary 5.1]{GCD15}. A stronger statement appears in \cite[Theorem 1.11]{CKS} but is not needed here.
\begin{proposition}\label{prop:LQblowup}
Let $(X,d,\mu)$ be a complete Lipschitz differentiability space with a chart $(U,\phi\colon X\rightarrow \RR^k)$. Then for almost every $x\in X$ and every mapping package
$$ \left((\hat{X},\hat{x}), (\RR^k,0), \hat{\phi}\right) \in \Tan(\phi,x),$$
the mapping $\hat{\phi}$ is a Lipschitz quotient map of $\hat{X}$ onto $\RR^k$.
\end{proposition}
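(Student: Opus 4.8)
The plan is to verify the two inclusions of Definition \ref{LQdef} for a tangent $\hat\phi$ separately, with almost all of the difficulty concentrated in the lower inclusion. The upper inclusion --- that $\hat\phi$ is $C$-Lipschitz with the same constant as $\phi$ --- is automatic: any tangent of $\phi$ is a limit of the (uniformly $C$-Lipschitz) maps $\phi$ on the rescaled spaces $(X,\lambda_k d, x)$, so $\hat\phi$ is $C$-Lipschitz by Lemma \ref{lem:GHmaps}. The substance is the lower inclusion $B(\hat\phi(\hat x), cr) \subseteq \hat\phi(B(\hat x, r))$, i.e.\ that $\hat\phi$ is ``open at every scale'' with a uniform constant. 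First I would isolate two facts: an almost-everywhere non-degeneracy of the chart $\phi$, and the observation that this non-degeneracy survives the blowup and forces surjectivity onto balls.

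For the non-degeneracy, a basic feature of Cheeger's theory, made precise through Alberti representations by Bate \cite{Bate}, is that at $\mu$-almost every $x\in U$ the map $\lambda\mapsto \operatorname{Lip}(\lambda\cdot\phi)(x)$ is a genuine \emph{norm} on $\RR^k$, where $\operatorname{Lip}(g)(x)=\limsup_{y\to x}|g(y)-g(x)|/d(x,y)$ denotes the pointwise upper Lipschitz constant. It is non-degenerate precisely because $k$ is the true dimension of the chart, so no direction of $\phi$ is infinitesimally redundant; were it degenerate on a set of positive measure, one could differentiate away a coordinate there and reduce $k$. In particular there is $c_0=c_0(x)>0$ with
$$ \operatorname{Lip}(\lambda\cdot\phi)(x)\geq c_0|\lambda| \quad\text{for all }\lambda\in\RR^k, $$
and $c_0$ can be taken uniform on a subset of $U$ of measure as close to $\mu(U)$ as we wish.

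Next I would run the blowup at a good point. Fix $x\in U$ that is simultaneously a point of differentiability, a point where the norm bound above holds, and a $\mu$-density point of $U$, and take $((\hat X,\hat x),(\RR^k,0),\hat\phi)\in\Tan(\phi,x)$. Transporting the bound through the almost-isometries of Proposition \ref{almostisos} shows that it becomes valid at \emph{every} point of $\hat X$: for each $\hat y\in\hat X$ and each $\lambda$, there are points arbitrarily near $\hat y$ along which $\lambda\cdot\hat\phi$ moves at rate at least $c_0|\lambda|$. Combined with the Lipschitz upper bound, this makes $\hat\phi$ a non-collapsing map that spreads in every direction. To convert this directional spreading into the genuine lower inclusion --- reaching \emph{every} point of $B(\hat\phi(\hat x),cr)$, not merely points far out along each ray --- I would produce, following Schioppa \cite{Schioppa} and Bate \cite{Bate}, $k$ independent Alberti representations of the tangent, and then use the curves of these representations to move $\hat x$ in the $k$ coordinate directions of $\RR^k$ at controlled speed, navigating to a prescribed nearby target.

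I expect this final upgrade to be the main obstacle. Passing from a pointwise, directional non-degeneracy to the quantitative surjectivity onto balls demanded by Definition \ref{LQdef} is the genuine analytic core of the statement; morally it is a degree-theoretic fact, but since $\hat X$ need not be a manifold the honest argument runs through the curve and derivation structure, which is exactly where the two cited proofs do their work (\cite[Theorem 5.56]{Schioppa} via derivations and Alberti representations, and \cite[Corollary 5.1]{GCD15} via a hands-on blowup analysis in the doubling case). The non-degeneracy lemma, while requiring care, is comparatively standard. Because complete proofs already exist in both references, it suffices in the present paper to cite them.
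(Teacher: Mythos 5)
Your proposal ends exactly where the paper does: Proposition \ref{prop:LQblowup} is not proved in the paper at all, but is simply quoted from the literature, with the paper remarking that independent proofs were given in \cite[Theorem 5.56]{Schioppa} and (in the doubling case) \cite[Corollary 5.1]{GCD15} --- precisely the citations you settle on. Your sketch of the underlying machinery (non-degeneracy of the chart, blowup at density/differentiability points, Alberti representations to obtain openness) is a fair description of what those references do, but since the paper offers no argument beyond the citation, your approach and the paper's coincide.
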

If $(X,d)$ is not a metrically doubling metric space, interpreting $\Tan(\phi,x)$ requires a bit of care. However, this can be done in general similarly to how we handle it in the proof of Theorem \ref{thm:Cheegerbound} below. See also \cite[Remark 2.11]{GCDK}.

\subsection{Lipschitz dimension bounds analytic dimension}
It was proven in \cite[Corollary 5.99]{Schioppa} and \cite[Corollary 8.5]{GCD15} that Assouad dimension is an upper bound for the analytic dimension of Lipschitz differentiability spaces. In fact, a stronger result is now known to hold: Hausdorff dimension is an upper bound for analytic dimension. This follows from  \cite[Theorem 1.1]{DMR}; see also the approaches in \cite{KM} and \cite{GP}.

On the other hand, it is a very interesting open question whether Nagata dimension bounds analytic dimension: see \cite[Question 1.2]{KS}.

We show here that Lipschitz dimension is an upper bound for analytic dimension. Note that by the results in subsection \ref{subsec:HausdorffAssouad}, this neither implies nor is implied by the above-mentioned results concerning Assouad and Hausdorff dimension.

\begin{theorem}\label{thm:Cheegerbound}
Let $(X,d,\mu)$ be a complete Lipschitz differentiability space. Then $\dim_C X \leq \dim_L X$.
\end{theorem}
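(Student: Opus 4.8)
The plan is to reduce the statement to a claim about a single tangent and one clean lemma about Lipschitz quotient maps. If $\dim_L X=\infty$ there is nothing to prove, so fix a Lipschitz light map $f\colon X\to\RR^n$ with $n=\dim_L X$. It suffices to show that every chart dimension $n_i$ from Definition \ref{def:LDspace} satisfies $n_i\le n$, so suppose toward a contradiction that $X$ carries a chart $(U,\phi\colon X\to\RR^k)$ with $k>n$. First I would localize and blow up \emph{simultaneously}. By Proposition \ref{prop:LQblowup}, for almost every $x\in U$ every element of $\Tan(\phi,x)$ is a Lipschitz quotient map onto $\RR^k$; fixing such an $x$ and blowing up the pair $(\phi,f)$ at $x$ along a common sequence of scales (extracting a convergent subsequence via Proposition \ref{sublimit}), I obtain a single tangent $\hat X$ equipped with a Lipschitz quotient map $\hat\phi\colon\hat X\to\RR^k$ and, by Corollary \ref{cor:LLtan}, a Lipschitz light map $\hat f\colon\hat X\to\RR^n$, together with $\dim_L\hat X\le\dim_L X=n$ by Corollary \ref{cor:Ldimtan}.

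The crux is then the following assertion, which I would isolate as a lemma: \emph{if a proper metric space $Z$ admits a Lipschitz quotient map onto $\RR^k$, then $\dim_L Z\ge k$}. Granting this, $\hat X$ would satisfy $\dim_L\hat X\ge k>n$, contradicting $\dim_L\hat X\le n$. This contradiction shows no chart can have dimension exceeding $n$, whence $\dim_C(X)\le n=\dim_L X$, as desired.

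To prove the lemma I would show that a hypothetical Lipschitz light map $g\colon Z\to\RR^m$ with $m<k$ cannot in fact be light. By Theorem \ref{prop:LLGH} it is enough to exhibit a weak tangent of $g$ collapsing a nontrivial connected set, and I would produce such a collapse from the path-lifting property of the Lipschitz quotient map $h\colon Z\to\RR^k$ (Lemma \ref{lem:pathlift}). Concretely, since $h$ is open and lifts all rectifiable curves, one lifts a controlled family of curves (for instance the radial segments filling out a small sphere $S\subseteq\RR^k$ centered at $h(z_0)$) to obtain, inside a bounded ball of $Z$, a connected configuration whose $h$-image carries the topology of $S^{k-1}$. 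Composing with $g$ and using $k-1\ge m$, a Borsuk--Ulam / degree-type argument forces two of these lifted curves to meet inside a single fiber of $g$, producing a nontrivial connected subset of some $g^{-1}(\mathrm{pt})$ and contradicting lightness.

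I expect this last step to be the main obstacle: path-lifting supplies individual lifted curves rather than a continuous section of $h$, so the degree argument must be set up so as to survive the non-uniqueness of lifts. Equally important, the argument must remain purely topological (driven by connectedness) rather than relying on any covering or net estimate, since a general Lipschitz differentiability space need not be doubling; this is precisely why a metric bound such as the Assouad estimate of \cite{GCD15,Schioppa} does not transfer and why lightness is the right tool here. For the same (non-doubling) reason, I would, as indicated in the remark following Proposition \ref{prop:LQblowup}, interpret all of the blow-ups above through the chart map $\phi$ by viewing the relevant tangents inside $\RR^k$ via Lemma \ref{lem:canonicaltangent}, so that Corollaries \ref{cor:LLtan} and \ref{cor:Ldimtan} may be invoked without assuming $X$ itself is doubling.
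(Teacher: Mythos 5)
There is a genuine gap, and it comes from discarding the one hypothesis that makes the theorem tractable. Your argument never uses the fact that Lipschitz functions on $X$ are \emph{differentiable} with respect to the chart $\phi$. The paper's proof does exactly your first step (blow up the pair $(\phi,f)$ at a common point where Proposition \ref{prop:LQblowup} applies), but it chooses the blow-up point to \emph{also} be a point of differentiability of the $n$ component functions of $f$ with respect to $\phi$. Equation \eqref{LD} then forces the factorization $\hat f = L\circ\hat\phi$ at the tangent, where $L\colon\RR^k\to\RR^n$ is the linear map assembled from the differentials $df_i(x)$. Since $k>n$, the kernel of $L$ contains a line $\ell$; path-lifting (Lemma \ref{lem:pathlift}) applied to the Lipschitz quotient map $\hat\phi$ produces a single nontrivial curve $\gamma$ with $\hat\phi(\gamma)\subseteq\ell$, hence $\hat f(\gamma)=L(\hat\phi(\gamma))=\{0\}$, contradicting the lightness of the Lipschitz light map $\hat f$. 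One lifted curve suffices; no degree theory is needed. By contrast, your route replaces this with a standalone lemma --- a Lipschitz quotient map onto $\RR^k$ forces $\dim_L\geq k$ --- which is nowhere proved in the paper, is much stronger than what is needed (it asserts a constraint between two \emph{unrelated} maps $g$ and $h$), and whose proof sketch does not close. The two obstructions are concrete: (a) path-lifting gives individual, non-canonical lifts, so you have no continuous map from $S^{k-1}$ (or from a ball) to which Borsuk--Ulam or a degree argument can be applied, as you yourself note without resolving it; and (b) even granting that two lifted curves ``meet inside a single fiber of $g$,'' the intersection of two curves with a fiber is just a point (or a pair of points) --- to contradict lightness you need an entire nontrivial connected set contained in one fiber, which your configuration does not produce.

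A secondary problem is your treatment of the non-doubling issue. Corollaries \ref{cor:LLtan} and \ref{cor:Ldimtan}, Proposition \ref{sublimit}, and the properness needed for Lemma \ref{lem:pathlift} all require (uniform) doubling or properness, and a general Lipschitz differentiability space has neither. Your proposed fix --- interpreting the tangents ``inside $\RR^k$ via Lemma \ref{lem:canonicaltangent} through $\phi$'' --- does not parse: that lemma applies to subsets of Euclidean space, and $\phi$ is merely a Lipschitz map, not an embedding, so $X$ cannot be viewed as a subset of $\RR^k$ through it. The paper instead invokes the pointwise doubling property of Lipschitz differentiability spaces to extract a compact, metrically doubling subset $A\subseteq U$ of positive measure which is itself a differentiability space with chart $(A,\phi|_A)$, and runs the entire blow-up argument on $A$. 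You would need this (or an equivalent localization) before any of the Gromov--Hausdorff machinery can legitimately be invoked.
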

\begin{proof}
Without loss of generality, we may assume that $n:= \dim_L X <\infty$, otherwise the theorem is trivial. Let $f\colon X\rightarrow\RR^n$ be a Lipschitz light map.

Let $k$ denote the analytic dimension of $(X,d,\mu)$, so that there is a chart 
$$(U,\phi\colon X\rightarrow \RR^k)$$
in $X$. Our goal is to show that $k\leq n$, so assume to the contrary that $k>n$. 

Lipschitz differentiability spaces satisfy a property called ``pointwise doubling'', which in particular implies that they can be covered up to measure zero by compact, metrically doubling subsets. (See \cite[Corollary 2.6]{BS} and \cite[Lemma 8.3]{Bate}.) We can therefore find a compact, metrically doubling subset $A\subset U$ with $\mu(A)>0$. Moreover, by \cite[Corollary 2.7]{BS}, the space $(A,d,\mu)$ is a complete Lipschitz differentiability space consisting of one chart $(A,\phi\colon A\rightarrow \RR^k)$.

We now work entirely on $A$ and forget about the rest of $X$. Of course, $f$ restricts to a Lipschitz light map $f|_A\colon A \rightarrow \RR^n$, which we continue to call $f$.

Choose a point $x\in A$ at which each of the $n$ $\RR$-valued component functions $f_i$ of $f$ are differentiable. We may also choose $x$ such that the conclusion of Proposition \ref{prop:LQblowup} holds at $x$. By rescaling and passing to a suitable subsequence, we find
$$ (\hat{A},\hat{x}) \in \Tan(A,x),$$
$$ \left( (\hat{A},\hat{x}), (\RR^n,0), \hat{f} \right) \in \Tan(f,x),$$
and
$$\left( (\hat{A},\hat{x}), (\RR^k,0), \hat{\phi} \right) \in \Tan(\phi,x).$$
From the definition of differentiability in \eqref{LD}, there is be a linear map $L:\RR^k \rightarrow \RR^n$, formed from the $df_i$, such that
$$ \hat{f} = L \circ \hat{\phi}.$$

Since we have assumed that $k>n$, $L$ has a non-trivial kernel. In other words, there is a line $\ell\in\RR^k$ such that $L(\ell) = \{0\}$.

By Proposition \ref{prop:LQblowup}, $\hat{\phi}$ is a Lipschitz quotient map. By Lemma \ref{lem:pathlift}, there must therefore be a non-trivial curve $\gamma\subseteq \hat{A}$ such that $\hat{\phi}(\gamma)\subseteq L$. It follows that
$$\hat{f}(\gamma) =  L \circ \hat{\phi}(\gamma) = \{0\},$$
i.e., that $\hat{f}$ is constant on $\gamma$.

On the other hand, $\hat{f}$ is a light mapping, by Proposition \ref{prop:LLlimit}. It can therefore not collapse the non-trivial curve $\gamma$ to a point. This is a contradiction, and therefore we must have $k\leq n$.
\end{proof}

\section{Mapping properties}\label{sec:mappingproperties}
In this section, we discuss the invariance and non-invariance properties of Lipschitz dimension under various classes of mappings. We show that Lipschitz light mappings cannot decrease but can arbitrarily increase Lipschitz dimension, and point to examples that show that Lipschitz dimension is in general not a quasisymmetric or snowflake invariant. We conclude by studying David--Semmes regular mappings and using them to prove Corollary \ref{cor:DS}, which provides a necessary condition for certain metric spaces to admit non-degenerate Lipschitz maps between them.

\subsection{Lipschitz light mappings}\label{subsec:LLmapping}
In subsection \ref{subsec:treesbuildings}, we already made the easy observation that if $f\colon X\rightarrow Y$ is Lipschitz light, then $\dim_L (X) \leq \dim_L (Y)$. In other words, Lipschitz light mappings can only raise Lipschitz dimension.

Here, we observe that this inequality may be strict (even if $f$ is surjective). The preliminary lemma we need is the following:

\begin{lemma}
Let $X$ be a metric space of Nagata dimension $0$, let $Y$ be a metric space, and let $f:X\rightarrow Y$ be Lipschitz. Then $f$ is Lipschitz light.
\end{lemma}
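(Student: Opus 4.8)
The plan is to observe that the conclusion follows almost entirely from the structure of the domain $X$, with the target $Y$ and the map $f$ playing only a superficial role. First I would unwind what Nagata dimension $0$ provides. By Definition \ref{def:nagata}, there is a constant $c>0$ such that for every $s>0$, $X$ admits a $cs$-bounded cover with $s$-multiplicity at most $1$. Since a single point has diameter $0\leq s$ and hence meets at most one member of the cover, the cover is in fact a partition of $X$ into pieces of diameter at most $cs$. Moreover, if two points $x,y$ lay in distinct pieces with $d(x,y)\leq s$, then $\{x,y\}$ would be a set of diameter $\leq s$ meeting two members of the cover, contradicting the multiplicity bound; hence distinct pieces are at mutual distance strictly greater than $s$.

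The central observation is then that the $r$-components of $X$ are uniformly small. Fixing $s=r$ and taking the partition above, any $r$-path $(x_1,\dots,x_k)$ in $X$ has $d(x_i,x_{i+1})\leq r$, so consecutive points must lie in the same piece; by induction every $x_i$ lies in a single common piece, which has diameter at most $cr$. Thus every $r$-component of $X$ has diameter at most $cr$. (This is precisely the reasoning already used in the proof of Proposition \ref{prop:zerodim}.)

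To finish, let $f$ be $L$-Lipschitz and set $C=\max(L,c)$. Given $r>0$ and $W\subseteq Y$ with $\diam(W)\leq r$, I would note that $f^{-1}(W)\subseteq X$ and that any $r$-path inside $f^{-1}(W)$ is in particular an $r$-path in $X$. Consequently each $r$-component of $f^{-1}(W)$ is contained in an $r$-component of $X$, and so has diameter at most $cr\leq Cr$. Combined with the fact that $f$ is $C$-Lipschitz, this verifies both conditions of Definition \ref{def:LL2} with constant $C$, so $f$ is Lipschitz light.

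There is no real obstacle to overcome here, so the work is essentially bookkeeping. The one point worth flagging explicitly is that the argument never uses anything about $Y$ or about the particular set $W$ beyond the trivial inclusion $f^{-1}(W)\subseteq X$; this reflects the fact that the lemma is really the statement that an $r$-component of a Nagata dimension $0$ space is small at every scale, so that \emph{any} Lipschitz map out of such a space is automatically Lipschitz light.
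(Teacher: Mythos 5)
Your proposal is correct and follows essentially the same route as the paper: the paper simply cites Proposition \ref{prop:zerodim} to conclude that every $r$-path in $X$ has diameter at most $Cr$, and then observes exactly as you do that $r$-components of $f^{-1}(W)\subseteq X$ are therefore uniformly small, so any Lipschitz $f$ is Lipschitz light. Your inline re-derivation of the bounded-$r$-component property from the Nagata dimension $0$ cover (via the partition/separation observation) is just an unpacking of that cited proposition, as you yourself note.
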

\begin{proof}
We showed in Proposition \ref{prop:zerodim} that $X$ must also have Lipschitz dimension $0$, i.e., that $X$ admits a Lipschitz light map to to the one-point metric space $\RR^0$. It follows that there is a constant $C>0$ such that every $r$-path $P$ in $X$ has diameter at most $Cr$. Hence, if $W\subseteq Y$ has $\diam(W)\leq r$, then every $r$-component of $f^{-1}(W)$ has diameter at most $Cr$, making $f$ Lipschitz light.
\end{proof}

The following fact is probably well-known, but we include a proof. It is an analog of the well-known topological fact that every compact metric space is a continuous image of the Cantor set.
\begin{proposition}\label{prop:cantorimage}
Let $Y$ be a compact, doubling metric space. Then there is a compact metric space $X$ of Nagata dimension $0$ and a Lipschitz map from $X$ onto $Y$.
\end{proposition}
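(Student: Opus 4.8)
The plan is to imitate the classical fact that every compact metric space is a continuous image of the Cantor set, but to arrange the coding so that the coding space is \emph{ultrametric} (which will force Nagata dimension $0$) and so that the coding map is \emph{Lipschitz} rather than merely continuous. Normalize so that $\diam(Y)\le 1$. For each $n\ge 0$ fix a $2^{-n}$-net $S_n\subseteq Y$; since $Y$ is compact, each $S_n$ is finite (and the doubling hypothesis bounds the number of points of $S_{n+1}$ lying near a given point of $S_n$). I would then define the coding space directly as the set
$$ X=\{\,(s_0,s_1,s_2,\dots): s_n\in S_n \text{ and } d(s_n,s_{n+1})\le 2\cdot 2^{-n}\text{ for all }n\,\}\subseteq \prod_n S_n, $$
equipped with the ultrametric $d_X\big((s_n),(t_n)\big)=2^{-N}$, where $N=\min\{k: s_k\ne t_k\}$ (and $d_X=0$ if the sequences coincide).

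Define $\pi\colon X\to Y$ by $\pi\big((s_n)\big)=\lim_{n\to\infty}s_n$. This limit exists because $\sum_n d(s_n,s_{n+1})\le \sum_n 2\cdot 2^{-n}<\infty$, so each coding sequence is Cauchy in the complete space $Y$. To see $\pi$ is Lipschitz, observe that if two sequences first differ at level $N\ge 1$, then both of their limits lie within $\sum_{k\ge N-1}2\cdot 2^{-k}\le 8\cdot 2^{-N}$ of the common point $s_{N-1}$, so that $d_Y(\pi(s),\pi(t))\le 16\cdot 2^{-N}=16\,d_X(s,t)$; the case $N=0$ is immediate from $\diam(Y)\le 1=d_X(s,t)$. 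Surjectivity follows from the net property: given $y\in Y$, choose $s_n\in S_n$ with $d(s_n,y)\le 2^{-n}$; then $d(s_n,s_{n+1})\le 2^{-n}+2^{-(n+1)}\le 2\cdot 2^{-n}$, so $(s_n)\in X$ and $\pi\big((s_n)\big)=y$.

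It remains to verify the two properties of $X$. First, $X$ is compact: it is a closed subset of the compact product $\prod_n S_n$ of finite sets (each defining condition $d(s_n,s_{n+1})\le 2\cdot 2^{-n}$ involves only two coordinates), and $d_X$ is exactly the metric inducing the product topology. Second, $X$ has Nagata dimension $0$ because it is ultrametric: for every $s>0$ the closed balls of radius $s$ partition $X$, each has diameter $\le s$, and any two points lying in distinct such balls are at distance $>s$ (the standard ultrametric computation). Hence this partition is an $s$-bounded cover with $s$-multiplicity $1$, which gives $\dim_N X=0$. Combined with the previous paragraph, $\pi$ is the desired Lipschitz surjection from a compact Nagata-dimension-$0$ space.

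The main technical point to get right is the bookkeeping between the combinatorial branching and the metric decay. The threshold $2\cdot 2^{-n}$ in the definition of $X$ must be large enough that the net-selection argument produces a genuine coding sequence for every $y\in Y$ (surjectivity), while the summability $\sum_n 2\cdot 2^{-n}<\infty$ must simultaneously hold so that coding sequences converge at a geometric rate and $\pi$ is genuinely Lipschitz and not merely continuous. Fixing the threshold at a constant multiple of $2^{-n}$ secures both at once; the doubling hypothesis, though not strictly needed for the $S_n$ to be finite, additionally yields bounded branching, so that $X$ is itself a doubling Cantor-type space.
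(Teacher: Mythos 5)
Your proof is correct, and it follows the same broad strategy as the paper's: code $Y$ by chains of points chosen from $2^{-n}$-nets, put the ``first disagreement'' ultrametric on the coding space, send a code to the limit of its (geometrically Cauchy) chain, and get Nagata dimension $0$ from the fact that closed balls in an ultrametric space partition the space at every scale. The implementations differ in a meaningful way, though. The paper uses the doubling hypothesis to fix a single finite alphabet $\mathcal{A}=\{1,\dots,M\}$, takes $X$ to be the \emph{full} shift $\mathcal{A}^{\mathbb{N}}$, defines the coding via chosen surjections $\phi_{k,y}\colon\mathcal{A}\to N_{k+1}(y)$ onto nearby net points, and proves surjectivity indirectly: it needs the nets to be \emph{nested} so that every net point is in the image, and then concludes $f(X)=Y$ by density plus compactness. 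You instead take $X$ to be the set of admissible chains $(s_n)$ of net points themselves (a closed subset of $\prod_n S_n$), which makes surjectivity immediate --- every $y\in Y$ visibly has a code --- and, notably, never uses doubling at all: compactness of $Y$ alone makes each $S_n$ finite, so your argument actually proves the statement for arbitrary compact metric spaces, and your nets need not be nested. What the paper's formulation buys in exchange is a concrete model domain: $X$ is a standard Cantor set, the same space for every $Y$ up to alphabet size, with bounded branching and hence itself doubling; your $X$ depends on $Y$, though as you remark it inherits doubling from $Y$ via the bounded-branching observation, which is all that is needed for the downstream Corollary \ref{cor:0dimimage}. Both routes verify the Nagata dimension $0$ claim by the same computation (the paper phrases it with cylinder sets $B_w$ rather than ultrametric balls, but these coincide), and your Lipschitz estimate, with constant $16$ in place of the paper's $4$, is carried out correctly.
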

\begin{proof}
Let $Y$ be a compact, doubling metric space. Assume without loss of generality that $\diam(Y)=1$. 

For each $k\in\mathbb{N}$, let $N_k\subseteq Y$ be a sequence of nested $2^{-k}$-nets in $Y$, i.e., $N_1 \subseteq N_2 \subseteq \dots$. Given a point $y$ in some $N_k$, let 
$$N_{k+1}(y) := \{z\in N_{k+1} : d(y,z) \leq 2^{-k}\}.$$

Since $Y$ is bounded and doubling, there is an $M\in\mathbb{N}$ such that $|N_1|\leq M$ and $|N_{k+1}(y)| \leq M$ for each $k\in\mathbb{N}$ and $y\in N_k$.

We form $X$ as an abstract Cantor set, as follows. Let $X$ denote the set of infinite words in the alphabet $\mathcal{A}=\{1,2,\dots,M\}$, i.e.
$$ X = \{ (a_1, a_2, \dots) : a_i\in \mathcal{A} \text{ for each } i\in\mathbb{N}\}.$$
Define a metric $d$ on $X$ by
$$ d((a_1, a_2, \dots), (b_1, b_2, \dots)) = 2^{-\min\{i:a_i \neq b_i\}}.$$
It is standard that $d$ defines a compact metric (indeed, an ``ultra-metric'') on $X$.

Given $s>0$, we may choose $k$ with $2^{-(k+1)}\leq  s < 2^{-k}$. Given a word $w$ of length $k$ in the alphabet $\mathcal{A}$, the set $B_w$ of all elements of $X$ beginning with $w$ has diameter $2^{-(k+1)} \leq s$. Moreover, if $W\subseteq X$ has diameter $\leq s <  2^{-k}$, then $W$ is contained in some $B_w$ with $|w|=k$, from which it follows that the dijsoint cover 
$$\{B_w:|w|=k\}$$
of $X$ has $s$-multiplicity at most $1$. Thus, $X$ has Nagata dimension $\dim_N X = 0$.

We now define a Lipschitz map from $X$ onto $Y$. For each $k\in\mathbb{N}$ and $y\in N_k$, choose an arbitrary surjective map 
$$\phi_{k,y}:\mathcal{A} \rightarrow N_{k+1}(y),$$
which we can do since $|\mathcal{A}|=M\geq |N_{k+1}(y)|$.

Similarly, choose an arbitrary surjective map
$$ \phi_1: \mathcal{A}\rightarrow N_1.$$
Note that, for each sequence $(a_1, a_2, \dots)\in X$, the sequence
$$ y_1:=\phi_1(a_1), y_2:=\phi_{1,y_1}(a_2), y_3:=\phi_{2,y_2}(a_3)$$
is Cauchy in $Y$ as $d(y_i, y_{i+1})\leq 2^{-i}$. We therefore define a map $f\colon X\rightarrow Y$ by
$$ f((a_i)) = \lim_{i\rightarrow \infty} y_i,$$
where $y_i$ is defined as above.

We now show that $f$ is Lipschitz. Let $a=(a_i)$ and $b=(b_i)$ be distinct elements of $X$. Let $k\in \mathbb{N}\cup \{0\}$ be the length of the maximal shared initial segment between $a$ and $b$, so that $d(a,b) = 2^{-(k+1)}$. Then the first $k$ terms of the sequences $(y_i)$ defining $f(a)$ and $f(b)$ agree. Therefore,
$$ d(f(a), f(b)) \leq  \sum_{j=k}^\infty 2^{-j} = 2^{-(k-1)} = 4 d(a,b).$$
Hence, $f$ is Lipschitz.

Lastly, we show that $f$ is surjective. Note that, for each $k\in\mathbb{N}$ and $y\in N_k$, the point $y$ itself is an element of $\phi_{k,y}(\mathcal{A})$ because $N_k\subseteq N_{k+1}$. Thus, there is a choice of $(a_i)$ making all $y_i=y$ for all $i$ sufficiently large. This implies that $y=f((a_i))$, meaning that $N_k\subseteq f(X)$ for each $k\in\mathbb{N}$. It follows that $f(X)$ is dense in $Y$, and hence $f(X)=Y$ since $X$ is compact and $f$ is continuous.
\end{proof}

We immediately have the following corollary of Proposition \ref{prop:cantorimage} and Proposition \ref{prop:zerodim}.
\begin{corollary}\label{cor:0dimimage}
Every compact, doubling metric space is the image under a Lipschitz light map of a metric space with Lipschitz dimension $0$. 
\end{corollary}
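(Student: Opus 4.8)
The plan is to observe that this corollary follows by directly combining Proposition \ref{prop:cantorimage} and Proposition \ref{prop:zerodim} with the (unlabeled) lemma immediately preceding Proposition \ref{prop:cantorimage}; essentially no new argument is required, since all the genuine content has already been established.

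First I would fix a compact, doubling metric space $Y$ and apply Proposition \ref{prop:cantorimage} to produce a compact metric space $X$ with $\dim_N(X) = 0$ together with a Lipschitz surjection $f \colon X \to Y$. This is the step that carries all the weight: it is where one builds the ultra-metric Cantor set of infinite words over a finite alphabet (whose cardinality is controlled by the doubling constant of $Y$) and defines the surjection onto $Y$ through nested $2^{-k}$-nets. Since Proposition \ref{prop:cantorimage} is already available, I would simply invoke it.

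Next I would upgrade the dimension statement on the domain: by Proposition \ref{prop:zerodim}, having Nagata dimension $0$ is equivalent to having Lipschitz dimension $0$, so $\dim_L(X) = 0$. Finally, I would note that $f$ is a Lipschitz map whose domain $X$ has Nagata dimension $0$, and hence is automatically Lipschitz light by the lemma immediately preceding Proposition \ref{prop:cantorimage}. Thus $f \colon X \to Y$ is a Lipschitz light surjection from a space of Lipschitz dimension $0$ onto $Y$, which is exactly the assertion.

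Because all three ingredients are in hand, there is no real obstacle in the corollary itself. If one instead wished to prove the statement from scratch, the hard part would be Proposition \ref{prop:cantorimage}: engineering a genuinely $0$-dimensional (in the Nagata, equivalently Lipschitz, sense) domain that nonetheless surjects Lipschitzly onto an arbitrary compact doubling target. The ultra-metric structure is what forces the domain to be $0$-dimensional, while the nested-net construction is what keeps the surjection Lipschitz; balancing these two competing requirements is the only subtle point, and it is precisely what the preceding proposition handles.
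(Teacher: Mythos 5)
Your proposal is correct and follows exactly the paper's route: the paper derives this corollary immediately from Proposition \ref{prop:cantorimage} and Proposition \ref{prop:zerodim}, with the unlabeled lemma preceding Proposition \ref{prop:cantorimage} supplying the fact that any Lipschitz map from a Nagata-dimension-$0$ space is automatically Lipschitz light. No differences to report.
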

In particular, using Corollary \ref{cor:carnotsubset}, we may find compact doubling spaces of infinite Lipschitz dimension, and thus by Corollary \ref{cor:0dimimage} see that Lipschitz light mappings may arbitarily increase the Lipschitz dimension of a metric space.

\subsection{Quasisymmetric and snowflake non-invariance}
As noted earlier, is easily apparent that Lipschitz dimension, like the Hausdorff, Assouad, and Nagata dimensions, is invariant under bi-Lipschitz deformations.

Nagata dimension is in addition a quasisymmetric invariant \cite[Theorem 1.2]{LS}. However, Lipschitz dimension is not.

\begin{corollary}\label{cor:nonsnow}
Lipschitz dimension is not a quasisymmetric, or even snowflake, invariant.
\end{corollary}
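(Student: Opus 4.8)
The plan is to exhibit explicit spaces witnessing the failure of snowflake invariance, and since every snowflake map is quasisymmetric, a single snowflake counterexample handles both cases at once. The natural source of such an example is already developed in the paper: Carnot groups, and specifically the Heisenberg group $\mathbb{H}$, interact badly with snowflaking precisely because snowflaking the fibers of the standard projection lowers their Lipschitz dimension while leaving the total space untouched in some comparable construction.

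The cleanest approach I would take uses the snowflake computation of Theorem \ref{thm:snowflake} together with the Carnot computation of Theorem \ref{thm:Carnot}. The point is that $(\RR,|\cdot|)$ and its $\epsilon$-snowflake $(\RR,|\cdot|^\epsilon)$ are snowflake equivalent, and both have Lipschitz dimension $1$ by Theorem \ref{thm:snowflake}, so on $\RR$ itself the snowflake map does \emph{not} change the dimension. Thus $\RR$ is useless as a direct witness, and the failure must come from a higher-dimensional phenomenon where snowflaking genuinely destroys connectivity of fibers. The key contrast is between $\RR^2$ with its Euclidean metric (Lipschitz dimension $2$) and a metric space that is snowflake-equivalent to it but has strictly smaller Lipschitz dimension. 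However, $(\RR^2, |\cdot|^\epsilon)$ still has Lipschitz dimension $2$ by Corollary \ref{cor:Rnsnowflake}, so pure Euclidean snowflaking is again invariant. This tells me the genuine counterexample cannot live in Euclidean space at all, and must instead exploit the infinite-versus-finite gap available through the Heisenberg group.

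Therefore the argument I would give runs as follows. Recall from subsection \ref{subsub:carnotbackground} that the standard projection $\pi\colon\mathbb{H}\to\RR^2$ has fibers $\pi^{-1}(y)$ that are each bi-Lipschitz to a $\tfrac{1}{2}$-snowflake of $\RR$. The Heisenberg group $\mathbb{H}$ has $\dim_L(\mathbb{H})=\infty$ by Theorem \ref{thm:Carnot}. I would then produce a metric space $Z$ that is a snowflake of $\mathbb{H}$ — concretely, take $Z=(\mathbb{H}, d_K^{\alpha})$ for a suitable $\alpha\in(0,1)$ — and show that this snowflaked space has \emph{finite} Lipschitz dimension. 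The mechanism is that snowflaking the Carnot--Carath\'eodory metric by the exponent $\alpha$ converts the group into a space that, being an $\alpha$-snowflake of a doubling space, admits a bi-Lipschitz embedding into Euclidean space by Assouad's theorem (\cite[Theorem 12.2]{He}); any bi-Lipschitz-to-Euclidean space automatically has finite Lipschitz dimension, bounded by the Euclidean dimension into which it embeds via Corollary \ref{cor:treebuilding}-style reasoning, or more directly since $\dim_L(\RR^N)=N$ and Lipschitz dimension is a bi-Lipschitz invariant. Thus $\dim_L(Z)<\infty=\dim_L(\mathbb{H})$, exhibiting a snowflake map between spaces of different Lipschitz dimension.

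The main obstacle, and the step I would watch most carefully, is verifying that a sufficiently strong snowflake of $\mathbb{H}$ really does bi-Lipschitz embed into some $\RR^N$: Assouad's theorem applies to snowflakes of doubling spaces with snowflake exponent strictly less than $1$, and $\mathbb{H}$ is doubling (as noted in subsection \ref{subsub:carnotbackground}), so $(\mathbb{H}, d_K^{\alpha})$ for any $\alpha\in(0,1)$ embeds bi-Lipschitzly into $\RR^N$ for $N$ depending on $\alpha$ and the doubling constant. Once that embedding is in hand, finiteness of $\dim_L(Z)$ is immediate from bi-Lipschitz invariance of Lipschitz dimension and the finiteness of $\dim_L(\RR^N)$. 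The only subtlety is bookkeeping the constants in Assouad's theorem, but no genuine difficulty arises. I would phrase the final conclusion by noting that the identity map $(\mathbb{H},d_K)\to(\mathbb{H},d_K^{\alpha})$ is an $\alpha$-snowflake homeomorphism (hence quasisymmetric) between a space of infinite Lipschitz dimension and one of finite Lipschitz dimension, which is exactly the required non-invariance.
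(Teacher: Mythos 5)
Your proposal is correct and follows essentially the same route as the paper: take a non-abelian Carnot group (the paper uses a general $\G$, you specialize to $\mathbb{H}$), note that any snowflake of it bi-Lipschitz embeds into some $\RR^N$ by Assouad's theorem and hence has finite Lipschitz dimension, while the group itself has infinite Lipschitz dimension by Theorem \ref{thm:Carnot}. The only cosmetic imprecision is your justification of the finiteness step: it follows not from bi-Lipschitz invariance plus $\dim_L(\RR^N)=N$ alone, but from the (easy) fact that the inclusion of any subset $Y\subseteq\RR^N$ into $\RR^N$ is Lipschitz light, so every subset of $\RR^N$ has Lipschitz dimension at most $N$.
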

\begin{proof}
Let $\G$ be a non-abelian Carnot group. Every snowflake of $\G$ admits a bi-Lipschitz embedding into some $\RR^n$ by Assouad's embedding theorem \cite[Theorem 12.2]{He}. Therefore, each snowflake of $\G$ has finite Lipschitz dimension. On the other hand, $\dim_L \G = \infty$ by Theorem \ref{thm:Carnot}.
\end{proof}
More specifically, Corollary \ref{cor:nonsnow} shows that snowflake mappings can arbitrarily decrease Lipschitz dimension.

\begin{question}
Can a snowflake map increase the dimension of a compact metric space?
\end{question}

Recall that in Corollary \ref{cor:Rnsnowflake}, we showed that snowflakes of $\RR^n$ have Lipschitz dimension $n$. However, for general quasisymmetric deformations of Euclidean space, Lipschitz dimension is not an invariant. This follows from a construction of Semmes \cite{Semmes2}.

\begin{corollary}\label{cor:quasiRn}
There exists $n\in\mathbb{N}$ and an Ahlfors $n$-regular quasisymmetric deformation of $\RR^n$ with infinite Lipschitz dimension.
\end{corollary}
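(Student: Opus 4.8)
The plan is to produce the required space directly from Semmes' work and then read off its infinite Lipschitz dimension using the weak-tangent machinery of Section \ref{sec:GH}. Concretely, I would invoke the construction of \cite{Semmes2}, which produces, for a suitable $n$, a metric $\rho$ on $\RR^n$ such that the identity map $(\RR^n,|\cdot|)\to(\RR^n,\rho)$ is quasisymmetric and $(\RR^n,\rho)$ is Ahlfors $n$-regular; thus $Z:=(\RR^n,\rho)$ is an Ahlfors $n$-regular quasisymmetric deformation of $\RR^n$. The essential feature of Semmes' examples that I would extract is that $\rho$ is engineered so that $Z$ fails to admit a bi-Lipschitz parameterization by $\RR^n$ precisely because one of its weak tangents $W$ is bi-Lipschitz to a non-abelian Carnot group $\G$ (for instance the Heisenberg group $\bH$, which forces $n$ to equal the homogeneous dimension of $\G$).

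Granting this, the remainder is a short assembly of results already in hand, mirroring the structure of the proof of Corollary \ref{cor:nonsnow}. Since $Z$ is Ahlfors $n$-regular it is doubling, so Corollary \ref{cor:Ldimtan} applies: Lipschitz dimension cannot increase when passing to a weak tangent. Taking the contrapositive, it suffices to exhibit a single weak tangent of $Z$ with infinite Lipschitz dimension. The weak tangent $W\in\WTan(Z)$ supplied by the construction is bi-Lipschitz to the non-abelian Carnot group $\G$; since $\dim_L$ is a bi-Lipschitz invariant and $\dim_L(\G)=\infty$ by Theorem \ref{thm:Carnot}, we obtain $\dim_L(W)=\infty$. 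Feeding $W$ back into Corollary \ref{cor:Ldimtan} then forces $\dim_L(Z)=\infty$. If one instead prefers to blow up at a point where a positive-measure Carnot structure appears, Corollary \ref{cor:carnotsubset} may be substituted for Theorem \ref{thm:Carnot} at this step.

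The main obstacle, and the only genuinely external input, is the identification of the weak tangent: verifying that Semmes' metric admits a weak tangent bi-Lipschitz to a non-abelian Carnot group. This is exactly the non-Euclidean-tangent phenomenon underlying Semmes' proof that $Z$ has no bi-Lipschitz parameterization, and it is where the delicate analysis of strong $A_\infty$-weights in \cite{Semmes2} does the real work. One compatibility point worth recording is dimensional: a weak tangent of an Ahlfors $n$-regular space is again Ahlfors $n$-regular, so $\G$ must have homogeneous (hence Hausdorff) dimension equal to $n$, and its topological dimension is then strictly smaller than $n$; this is consistent, since topological dimension need not be preserved under weak tangents (only the Lipschitz dimension estimate of Corollary \ref{cor:Ldimtan} is needed). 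Everything beyond the Semmes identification follows formally from Theorem \ref{thm:Carnot} and Corollary \ref{cor:Ldimtan}.
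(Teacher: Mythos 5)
There is a genuine gap, and it sits exactly where you placed all the weight: the identification of the weak tangent. What \cite[Theorem 1.15]{Semmes2} provides is an Ahlfors $n$-regular quasisymmetric deformation of $\RR^n$ that \emph{contains a bi-Lipschitz embedded copy} of the Heisenberg group $\bH$; it does not assert that some weak tangent of the space is bi-Lipschitz to $\bH$ (or to any non-abelian Carnot group), and in fact no such weak tangent can exist. The identity map $(\RR^n,|\cdot|)\rightarrow Z=(\RR^n,\rho)$ is $\eta$-quasisymmetric, and quasisymmetry with a fixed $\eta$ is preserved under rescaling of source and target and under pointed limits; consequently every weak tangent of $Z$ is an $\eta$-quasisymmetric image of $\RR^n$, hence homeomorphic to $\RR^n$ and of topological dimension $n$. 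On the other hand, as you yourself note, a weak tangent of an Ahlfors $n$-regular space is again Ahlfors $n$-regular, so a weak tangent bi-Lipschitz to a Carnot group $\G$ would force $\G$ to have homogeneous dimension $n$; if $\G$ is non-abelian its topological dimension is then strictly less than $n$, and since bi-Lipschitz homeomorphisms preserve topological dimension this contradicts the previous sentence. Your ``consistency'' remark --- that topological dimension need not survive blow-up --- is exactly the escape that is unavailable here, because the quasisymmetric parameterization \emph{does} survive blow-up. Your dimension count already signals the problem: taking $\G=\bH$ forces $n=4$, whereas Semmes' construction requires $n>4$ ($n$ is the dimension of a Euclidean space containing a snowflake embedding of $\bH$, whose image has Hausdorff dimension greater than $4$).

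The repair is to replace tangency by containment, which is the paper's actual route and is shorter. If $A\subseteq X$ and $f\colon X\rightarrow\RR^k$ is Lipschitz light, then $f|_A$ is Lipschitz light (any $r$-path in $(f|_A)^{-1}(W)=f^{-1}(W)\cap A$ is an $r$-path in $f^{-1}(W)$), so $\dim_L$ is monotone under passing to subsets. Since Semmes' space $Z$ contains a bi-Lipschitz copy of $\bH$, and $\dim_L(\bH)=\infty$ by Theorem \ref{thm:Carnot} (together with bi-Lipschitz invariance of $\dim_L$), we get $\dim_L(Z)=\infty$ directly; neither Corollary \ref{cor:Ldimtan} nor Corollary \ref{cor:carnotsubset} is needed. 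Everything else in your write-up (the use of Theorem \ref{thm:Carnot}, the reduction to exhibiting one bad piece of $Z$) is fine once the tangent claim is replaced by the subset claim.
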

We recall that a metric space $X$ is \textit{Ahlfors $n$-regular} if there is a constant $C>0$ such that
\begin{equation}\label{eq:AR}
C^{-1} r^n \leq \HH^n(\overline{B}(x,r)) \leq Cr^n \text{ for all } r\leq \diam(X).
\end{equation}

\begin{proof}
By \cite[Theorem 1.15]{Semmes2}, there is an Ahlfors $n$-regular quasisymmetric deformation of $\RR^n$ that contains a bi-Lipschitz embedded copy of the Heisenberg group. Thus, it has infinite Lipschitz dimension by Theorem \ref{thm:Carnot}.
\end{proof}

The example provided by the proof of Corollary \ref{cor:quasiRn} must have $n> 4$, since $n$ arises in \cite[Theorem 1.15]{Semmes2} as the dimension of a Euclidean space containing a snowflake embedding of the Heisenberg group, which must have Hausdorff dimension greater than $4$. An interesting question would be to explore the Lipschitz dimension of quasisymmetric deformations of low-dimensional Euclidean spaces:

\begin{question}\label{q:quasiarc}
Does every quasi-arc (quasisymmetric image of $[0,1]\subseteq \RR$) have Lipschitz dimension $1$? Does every quasi-plane (quasisymmetric image of $\RR^2$) have finite Lipschitz dimension?
\end{question}
We note that a positive answer to Question \ref{q:nagata1} would imply a positive answer to the first part of Question \ref{q:quasiarc}.

\subsection{David--Semmes regularity and non-degenerate Lipschitz maps}\label{subsec:DSregular}
Another well-studied class of mappings are the so-called David--Semmes regular mappings.
\begin{definition}[\cite{DS97}, Definition 12.1]\label{def:DSregular}
A Lipschitz map $f\colon X\rightarrow Y$ is \textit{David--Semmes regular} if there is a constant $C>0$ such that, if $B=B(y,r)\subseteq Y$, then $f^{-1}(B)$ can be covered by at most $C$ balls of radius $Cr$ in $X$.
\end{definition}
David--Semmes regular mappings are finite-to-one in a controlled, quantitative manner.

Lipschitz light mappings need not be David--Semmes regular: David--Semmes regular mappings are always bounded-to-one, in particular discrete, whereas Lipschitz light mappings need not be. However, we do have the following direction.

\begin{lemma}\label{lem:DSLL}
David--Semmes regular mappings are Lipschitz light.
\end{lemma}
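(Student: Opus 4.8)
The plan is to verify the definition of Lipschitz light (Definition \ref{def:LL2}) directly, using the covering supplied by David--Semmes regularity to control the $r$-components of preimages. A David--Semmes regular map is by definition $C$-Lipschitz for some $C$, so the first requirement of Definition \ref{def:LL2} is immediate, and all the content lies in bounding the diameters of the $r$-components of $f^{-1}(W)$ for small sets $W$.

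First I would fix $r>0$ and a set $W\subseteq Y$ with $\diam(W)\leq r$, and choose any $y\in W$, so that $W\subseteq B(y,2r)$. Applying Definition \ref{def:DSregular} to the ball $B(y,2r)$ produces balls $B_1,\dots,B_m$ in $X$ with $m\leq C$, each of radius $2Cr$ (hence of diameter at most $4Cr$), whose union contains $f^{-1}(B(y,2r))\supseteq f^{-1}(W)$. Now let $A$ be an $r$-component of $f^{-1}(W)$; then $A$ is covered by those $B_i$ that it meets, of which there are at most $m\leq C$.

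The key step is a ``connecting through boundedly many bounded pieces'' argument. I form an auxiliary graph $H$ whose vertices are the indices $i$ with $B_i\cap A\neq\emptyset$, and in which $i$ is joined to $i'$ whenever there exist $p\in B_i\cap A$ and $q\in B_{i'}\cap A$ with $d(p,q)\leq r$. The crucial point is that $H$ is connected: given vertices $i,i'$, choose $p\in B_i\cap A$ and $q\in B_{i'}\cap A$; since $A$ is an $r$-component, there is an $r$-path from $p$ to $q$ lying \emph{entirely in $A$} (each point of such a path is $r$-equivalent to $p$, hence belongs to $A$), and the sequence of balls containing its successive points traces a walk in $H$ from $i$ to $i'$. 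Because $H$ has at most $C$ vertices, any two of its vertices are joined by an edge-path of length at most $C-1$. Given $x,y\in A$, I then chain along such a graph path: each ``stay inside one ball'' segment contributes at most $4Cr$, and each of the at most $C-1$ edges contributes at most $r$, so telescoping gives $d(x,y)\leq (4C^2+C-1)r$. Hence $\diam(A)\leq (4C^2+C)r$, and $f$ is Lipschitz light (with constant $4C^2+C$).

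I expect the only genuine obstacle to be the bookkeeping in this final chaining estimate, together with the easy but essential observation that an $r$-path witnessing membership in an $r$-component may be taken to lie inside the component $A$ itself. Everything else is a direct unwinding of the two definitions.
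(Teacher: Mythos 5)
Your proposal is correct and follows essentially the same route as the paper's proof: both verify Definition \ref{def:LL2} directly by covering the preimage with at most $C$ balls of radius comparable to $Cr$ (via David--Semmes regularity) and then chaining through these boundedly many balls to bound the diameter of an $r$-connected piece by a constant multiple of $C^2 r$. The only difference is bookkeeping --- you organize the chaining via a connectivity graph on the balls meeting the component $A$ (using the correct observation that an $r$-path witnessing equivalence lies entirely in $A$), while the paper takes an arbitrary $r$-path and uses a minimal chain of balls whose doubles intersect consecutively; both give the same constant up to lower-order terms.
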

\begin{proof}
Let $f\colon X\rightarrow Y$ be David--Semmes regular. We may assume without loss of generality that the constant $C$ from Definition \ref{def:DSregular} is at least $1$.

Let $W$ be a set of diameter at most $r$ in $Y$. Then $f^{-1}(W)\subseteq X$ can be covered by a collection $\mathcal{B}$ of at most $C$ closed balls, each of radius $Cr$. 

Let $P=(x_1, \dots, x_k)$ be any $r$-path in $f^{-1}(W) \subseteq \cup_{B\in\mathcal{B}} B \subseteq X$. Without loss of generality, assume that $\diam(P) = d(x_1, x_k)$. Let 
$$ (B_1, B_2, \dots, B_m)$$
be a list of balls in $\mathcal{B}$ such that
\begin{equation}\label{eq:ballpath1}
x_1 \in B_1 \text{ and } x_k\in B_m,
\end{equation}
and
\begin{equation}\label{eq:ballpath2}
2B_i \cap 2B_{i+1} \neq \emptyset \text{ for each } i\in\{1,\dots, m-1\},
\end{equation}
and moreover such that $m$ is the minimal length of such a ``chain of balls'' satisfying \eqref{eq:ballpath1} and \eqref{eq:ballpath2}. Note that simply choosing one ball from $\mathcal{B}$ containing each $x_i$ yields a path satisfying \eqref{eq:ballpath1} and \eqref{eq:ballpath2}, so such chains exist.

The fact that $m$ is minimal implies that $B_i \neq B_j$ if $1\leq i < j\leq m$. Indeed, if $B_i = B_j$ for such $i,j$, then excising all the balls between indices $i+1$ and $j-1$ yields a shorter list satisfying  \eqref{eq:ballpath1} and \eqref{eq:ballpath2}.

Since there are only at most $C$ distinct balls in $\mathcal{B}$, we have $m\leq C$, and therefore
$$ \diam(P)= d(x_1, x_k) \leq 4C^2r.$$
As $P$ was an arbitary $r$-path in $f^{-1}(W)$, it follows that the $r$-components of $f^{-1}(W)$ have diameter at most $4C^2r$, which proves that $f$ is Lipschitz light.

\end{proof}

Combined with a result of David--Semmes, Lemma \ref{lem:DSLL} yields one way to control Lipschitz dimension of weak tangents of certain metric spaces by Hausdorff dimension. Recall the definition of Ahlfors regularity from \eqref{eq:AR}.
\begin{corollary}\label{cor:DS}
Let $X$ be an Ahlfors $n$-regular metric space, $Y$ a complete, doubling metric space, and $Z$ a compact subset of $X$. Suppose that there is a Lipschitz map $g:Z\rightarrow Y$ such that $\mathcal{H}^n(g(X))>0$. 

Then there are weak tangents $(\hat{X},\hat{x}) \in \WTan(X)$ and $(\hat{Y},\hat{y})\in\WTan(Y)$ such that $\dim_L\hat{X} \leq \dim_L \hat{Y}$.

In particular, if $\dim_L(Y)\leq m$, then $X$ has a weak tangent with Lipschitz dimension at most $m$.
\end{corollary}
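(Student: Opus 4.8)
The plan is to combine three ingredients: a blow-up result from the David--Semmes theory of \cite{DS97} that, under the positive-measure hypothesis, produces a David--Semmes regular map between weak tangents of $Z$ and $Y$; Lemma \ref{lem:DSLL}, which upgrades David--Semmes regularity to the Lipschitz light property; and the elementary fact recorded in subsection \ref{subsec:treesbuildings} that a Lipschitz light map cannot decrease Lipschitz dimension. (I read the hypothesis $\mathcal H^n(g(X))>0$ as meaning $\mathcal H^n(g(Z))>0$, since $g$ is only defined on $Z$.)

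First I would produce a regular blow-up of $g$. Because $X$ is Ahlfors $n$-regular, $Z$ is compact, and $g\colon Z\to Y$ is Lipschitz with $\mathcal H^n(g(Z))>0$, the map $g$ fails to collapse $\mathcal H^n$ on a set of positive measure; the David--Semmes blow-up machinery then furnishes a point $x\in Z$ and scales $\lambda_k\to\infty$ at which, after passing to the convergent subsequence supplied by Proposition \ref{sublimit}, the mapping packages
$$ \left((Z,\lambda_k d, x),(Y,\lambda_k\rho, g(x)), g\right) \longrightarrow \left((\hat Z,\hat x),(\hat Y,\hat y),\hat g\right) $$
converge to a package whose limiting map $\hat g\colon \hat Z\to\hat Y$ is David--Semmes regular in the sense of Definition \ref{def:DSregular}. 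Here $(\hat Z,\hat x)\in\WTan(Z)$ and $(\hat Y,\hat y)\in\WTan(Y)$. It is precisely the positive-measure assumption that makes $\hat g$ regular, rather than merely Lipschitz.

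The step I expect to be the main obstacle is that the conclusion asks for a weak tangent of the ambient space $X$, while the blow-up above is built from the subset $Z$ on which $g$ lives. To close this gap I would choose $x$ to be, in addition to a point admitting the regular blow-up, an $\mathcal H^n$-density point of $Z$ in $X$; since $\mathcal H^n|_X$ is a doubling measure by Ahlfors $n$-regularity, $\mathcal H^n$-almost every point of $Z$ qualifies. Blowing up $X$ and $Z$ simultaneously at $x$ along the scales $\lambda_k$, Lemma \ref{subsetconvergence} gives (after passing to a further subsequence) convergence of $(Z,\lambda_k d, x)$ to a closed subset of the tangent $(\hat X,\hat x)\in\WTan(X)$. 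If that subset omitted a ball of $\hat X$, then Ahlfors regularity of $\hat X$ would force a fixed proportion of measure to be absent from $Z$ at arbitrarily small scales, contradicting that $x$ is a density point. Hence $\hat Z=\hat X$, and $\hat g$ is a David--Semmes regular map defined on all of $\hat X\in\WTan(X)$.

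With the regular blow-up $\hat g\colon\hat X\to\hat Y$ in hand, the conclusion is immediate. By Lemma \ref{lem:DSLL}, $\hat g$ is Lipschitz light, and so by the monotonicity observation of subsection \ref{subsec:treesbuildings} we obtain
$$ \dim_L \hat X \le \dim_L \hat Y, $$
which is the first assertion. For the ``in particular'' statement, if $\dim_L(Y)\le m$ then Corollary \ref{cor:Ldimtan} gives $\dim_L\hat Y\le m$, and therefore $\dim_L\hat X\le m$; thus $\hat X\in\WTan(X)$ is a weak tangent of $X$ of Lipschitz dimension at most $m$.
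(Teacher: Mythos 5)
Your proposal is correct and takes essentially the same route as the paper: a David--Semmes blow-up producing a regular weak tangent map, upgraded to Lipschitz light via Lemma \ref{lem:DSLL}, followed by the monotonicity observation of subsection \ref{subsec:LLmapping} and, for the final claim, Corollary \ref{cor:Ldimtan}. The only difference is that the paper cites \cite[Proposition 12.8]{DS97} directly, which already delivers a David--Semmes regular weak tangent map defined on a weak tangent of the ambient space $X$, so the density-point argument you supply to pass from $\WTan(Z)$ to $\WTan(X)$ is subsumed in that citation.
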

\begin{proof}
By \cite[Proposition 12.8]{DS97}, there is a weak tangent
$$ \left((\hat{X},\hat{x}), (\hat{Y},\hat{y}),\hat{g}\right) \in \WTan(g)$$
such that $\hat{g}$ is David--Semmes regular. The map $\hat{g}$ is then Lipschitz light by Lemma \ref{lem:DSLL}, and result then follows from the observation at the beginning of subsection \ref{subsec:LLmapping}.

The ``In particular...'' statement follows from Corollary \ref{cor:Ldimtan}.
\end{proof}

Corollary \ref{cor:DS} can be viewed as a statement about which Ahlfors $n$-regular spaces can admit ``non-degenerate'' Lipschitz maps into other spaces, where a ``non-degenerate'' Lipschitz map is one whose image has positive $\HH^n$-measure. 

In particular, if $X$ is an Ahlfors $n$-regular metric space such that every weak tangent of $X$ has Lipschitz dimension greater than $m$, then $X$ cannot admit a non-degenerate Lipschitz map into a space of Lipschitz dimension at most $m$.

Specializing further, if $X$ is an Ahlfors $n$-regular metric space such that every weak tangent of $X$ has infinite Lipschitz dimension (as we've seen is the case for non-abelian Carnot groups), then $X$ cannot admit a non-degenerate Lipschitz map into any metric space of finite Lipschitz dimension. This is closely related to the notion of ``strong unrectifiability'' studied in \cite[Section 7]{AK} and \cite[Section 4.1]{GCDKin}, among other places.

\bibliography{lipdimbib}{}
\bibliographystyle{plain}

\end{document}